\documentclass[10pt]{amsart}
\usepackage[utf8]{inputenc}

\usepackage{amsthm}
\usepackage{amsmath, amssymb}

\usepackage{enumitem}
\usepackage[all]{xy}

\addtolength{\textwidth}{0pt}

\theoremstyle{plain}
\newtheorem{theorem}{Theorem}[section]
\newtheorem{corollary}[theorem]{Corollary}
\newtheorem{definition}[theorem]{Definition}
\newtheorem{lemma}[theorem]{Lemma}
\newtheorem{proposition}[theorem]{Proposition}
\newtheorem{example}[theorem]{Example}

\theoremstyle{remark}

\DeclareMathOperator{\Aut}{Aut}

\DeclareMathOperator{\Part}{Part}
\DeclareMathOperator{\Age}{Age}

\DeclareMathOperator{\dom}{dom}
\DeclareMathOperator{\range}{range}
\DeclareMathOperator{\Forb}{Forb}

\parindent=0mm
\parskip=1.3mm
\linespread{1.1}

\usepackage{hyperref}
\usepackage{color, colortbl}
\hypersetup{pdfborder={0 0 0}}

\renewcommand{\restriction}{\mathord{\upharpoonright}}

\binoppenalty=\maxdimen
\relpenalty=\maxdimen

\title[Coherent extension and automorphism groups]{Coherent extension of partial automorphisms, free amalgamation and automorphism groups}

\author{Daoud Siniora}

\address{Department of Mathematics and Actuarial Science, The American University in Cairo}

\email{daoud.siniora@aucegypt.edu}

\author{S\l awomir Solecki}

\address{Department of Mathematics, Cornell University}

\email{ssolecki@cornell.edu}

\thanks{Research of D. Siniora supported by the University of Leeds.}  

\thanks{Research of S. Solecki supported by 
NSF grant DMS--1700426.}

\date{\today}

\begin{document}

\maketitle

\begin{abstract}
We give strengthened versions of the Herwig--Lascar and Hod\-kin\-son--Otto extension theorems for partial automorphisms of finite structures. Such 
strengthenings yield several combinatorial and group-theoretic consequences for homogeneous structures. For instance, 
we establish a coherent form of the extension property for partial automorphisms for certain Fra\"{i}ss\'{e} classes. We deduce from these results
that the isometry group of the rational Urysohn space, the automorphism group of the Fra\"{i}ss\'{e} limit of any Fra\"{i}ss\'{e} class that is the class of 
all $\mathcal{F}$-free structures (in the Herwig--Lascar sense), and the automorphism group of any free homogeneous structure over a finite relational 
language, all contain a dense locally finite subgroup. We also show that any free homogeneous structure admits ample generics.            
\end{abstract}

\section{Introduction}

The purpose of this paper is to prove results that strengthen 
the Herwig--Lascar extension theorem \cite[Theorem 3.2]{herwiglascar} and the Hodkinson--Otto extension theorem \cite[Theorem 9]{hodkinsonotto}. 
The strengthening is obtained by replacing the notion 
of {\em extension property for partial automorphisms} (EPPA) with the new notion of {\em coherent EPPA}. 
The strengthened versions are stated in Theorems \ref{T:main} and \ref{HOstrengthenedtheorem} below. 
As we demonstrate in the paper, these sharper versions are of interest in applications, for instance, to the structure of isometry groups of metric spaces 
and automorphism groups of free homogeneous structures. The proofs of the sharper versions consist of reorganizing of and adding new ingredients to the original proofs from 
\cite{herwiglascar} and \cite{hodkinsonotto}; for example, the usage in the model theoretic context of ideas coming from ergodic theory (Mackey range from \cite{mackey}; see Lemma~\ref{L:spec} below) appears new. 

In this paper, $\mathcal L$ denotes a finite {\em relational} language. Actually,
it suffices to assume that arities of symbols in $\mathcal L$ is
bounded. By a structure we understand an $\mathcal
L$-structure. Let $A$ be an $\mathcal{L}$-structure. A \textit{partial automorphism} of $A$ is an $\mathcal{L}$-isomorphism $p:U\to V$ where $U, V$ are substructures of $A$. We denote by $ \Part(A)$ the set of all partial automorphisms of $A$.

\begin{definition}\rm \label{EPPAdefinition}
A class $\mathcal{C}$ of finite $\mathcal{L}$-structures has the \textit{extension property for partial automorphisms (EPPA)} if for every $A\in\mathcal{C}$, there exists $B\in \mathcal{C}$ containing $A$ as a substructure such that every partial automorphism of $A$ extends to an automorphism of $B$. Such extension is called an \textit{EPPA-extension}.
\end{definition}

We now introduce a stronger notion of EPPA, which we call coherent EPPA, where in addition to extending partial automorphisms, we also require that the composition of the extensions of any two partial automorphisms to be equal to the extension of their composition understood appropriately. The notion of 
coherence we define now will be at the heart of the matter.

Let $P$ be a family of partial bijections between subsets of a set $X$.
We call a triple $(p_1, p_2, q)\in P^3$ {\em coherent} if
\[
{\rm dom}(p_2) = {\rm dom }(q),\, {\rm range}(p_1) = {\rm range}(q),\,
{\rm range}(p_2) = {\rm dom}(p_1)
\]
and
\[
q = p_1\circ p_2.
\]
In the situations we will encounter, the set $X$ will be finite, and so one of the three conditions:
${\rm dom}(p_2) = {\rm dom }(q),\, {\rm range}(p_1) = {\rm range}(q),\,$ or ${\rm range}(p_2) = {\rm dom}(p_1)$ 
can be eliminated without changing the meaning of the notion of
coherence.

\begin{definition}\rm 
Let $P$ and $S$ be families of partial bijections between subsets
of $X$ and between subsets of $Z$, respectively. A function
$\phi\colon P\to S$ is {\em coherent} if for each coherent triple
$(p_1, p_2, q)\in P^3$, its image $(\phi(p_1), \phi(p_2), \phi(q))\in
S^3$ is coherent. 
\end{definition}

Coherence is a notion of homomorphism
between families of partial bijections. Note that the composition
of two coherent functions is coherent.  
If both $P$ and $S$ are groups of
permutations of $X$ and $Z$, respectively, then a coherent
function from $P$ to $S$ is a group homomorphism. In most, but not all,
situations we will encounter, $X$ will be a subset of $Z$ and
$\phi(p)$ will be an extension of $p\in P$.

Here is the strengthening of the EPPA from Definition~\ref{EPPAdefinition}. 

\begin{definition}\label{coherentEPPAdef}\rm
 A class $\mathcal{C}$ of finite $\mathcal{L}$-structures has \textit{coherent EPPA} if for every $A\in \mathcal{C}$, there exists $B\in\mathcal{C}$ and a coherent map $\phi:{\rm Part(A)}\to {\rm Aut}(B)$ such that $A\subseteq B$ and, for every $p \in {\rm Part(A)}$, $\phi(p)$ extends $p$.
\end{definition}

We call $B$ in Definition \ref{coherentEPPAdef} a coherent EPPA-extension of $A$. Also note that the restriction of the map $\phi$ in this 
definition to $\Aut(A)$ 
gives a group embedding $\phi:\Aut(A)\to \Aut(B)$.

Our first aim is to formulate a theorem, Theorem~\ref{forbclasses}, that produces, under appropriate assumptions, coherent EPPA. We introduce now notions 
needed to state this theorem.

Let $A, B$ be $\mathcal{L}$-structures. A \textit{homomorphism} from $A$ to $B$ is a map $h:A\to B$ such that for every relation symbol $R\in \mathcal{L}$ and 
tuple $\bar{a}\in A$, if $A\models R(\bar{a})$, then $B\models R(h(\bar{a}))$. An \textit{embedding} $h$ of $A$ into $B$ is an injective homomorphism such that  
if $B\models R(h(\bar{a}))$, then $A\models R(\bar{a})$.

\begin{definition}\label{forbembeddingdef}\rm
Let $\mathcal{F}$ be a family of $\mathcal{L}$-structures.
\begin{itemize}[noitemsep,topsep=0pt]
\item An $\mathcal{L}$-structure $A$ is called {\em $\mathcal F$-free under homomorphisms} ({\em embeddings}, respectively) if there is no homomorphism 
(embedding, respectively) from a structure in $\mathcal F$ to $A$. 

\item Denote by $\Forb_h(\mathcal{F})$ the class of all finite $\mathcal{L}$-structures which are $\mathcal{F}$-free under homomorphisms.

\item Denote by $\Forb_e(\mathcal{F})$ the class of all finite $\mathcal{L}$-structures which are $\mathcal{F}$-free under embeddings.
\end{itemize}
\end{definition}

Note that the classes $\Forb_h(\mathcal{F})$ and $\Forb_e(\mathcal{F})$ consists of finite structures, and 
$\Forb_h(\mathcal{F}) \subseteq \Forb_e(\mathcal{F})$. When we just say $\mathcal{F}$-free we mean 
$\mathcal F$-free under homomorphisms. The notion of $\mathcal F$-freeness comes from 
Herwig--Lascar \cite[p.1994]{herwiglascar}. 

\begin{definition}\rm 
A class $\mathcal{C}$ of finite $\mathcal{L}$-structures is called a \textit{Fra\"{i}ss\'{e} class} if it contains 
countably infinite isomorphism types, is closed under isomorphism, and has the hereditary property, the joint embedding property, and the amalgamation property.
\end{definition}

A relational $\mathcal{L}$-structure $M$ is \textit{homogeneous} if it is countable and every partial isomorphism between finite substructures of $M$ 
extends to a total automorphism. Fra\"{i}ss\'{e}'s Theorem states that countably infinite homogeneous $\mathcal{L}$-structures arise as Fra\"{i}ss\'{e} 
limits of Fra\"{i}ss\'{e} 
classes of finite $\mathcal{L}$-structures. 

We now recall the notion of a Gaifman graph and a Gaifman clique.

\begin{definition}\rm
Let $A$ be an $\mathcal{L}$-structure, where $\mathcal{L}$ is a relational language. The \textit{Gaifman graph} of $A$, 
denoted by ${\rm Gaif}(A)$, is the graph whose vertex set is the domain of $A$, and whose edge relation is defined as follows: two distinct vertices 
$u, v \in A$ are adjacent if and only if there is an $n$-ary relation $R\in\mathcal{L}$ and an $n$-tuple $(a_1,\ldots,a_n)\in A^n$ such that 
$u,v\in \{a_1,a_2,\ldots,a_n\}$ and $A\models R(a_1,a_2,\ldots,a_n)$. 

We say that $A$ is a \textit{Gaifman clique} if ${\rm Gaif}(A)$ is a clique.
\end{definition}

\begin{theorem}\label{forbclasses} Let $\mathcal{L}$ be a finite relational language, and $\mathcal{F}$ be a family of finite $\mathcal{L}$-structures.
\begin{enumerate}[label=(\roman*), noitemsep,topsep=0pt]
\item If $\mathcal{F}$ is finite and $\Forb_h(\mathcal{F})$ is a Fra\"{i}ss\'{e} class, then $\Forb_h(\mathcal{F})$ has coherent EPPA.

\item If each element of $\mathcal{F}$ is a Gaifman clique, then $\Forb_e(\mathcal{F})$ has coherent EPPA. 
\end{enumerate}
\end{theorem}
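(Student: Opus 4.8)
The plan is to treat the two parts as the coherent refinements of the Herwig--Lascar theorem \cite[Theorem 3.2]{herwiglascar} and the Hodkinson--Otto theorem \cite[Theorem 9]{hodkinsonotto}, following their constructions rather than invoking them as black boxes. In both cases one runs the original argument to obtain, for a given $A\in\mathcal{C}$, a finite group $G$ acting on a structure $B_0\in\mathcal{C}$ with $A\subseteq B_0$ in which every $p\in\Part(A)$ extends to some $f(p)\in\Aut(B_0)$; under the hypotheses of (i) this is the Herwig--Lascar conclusion for $\Forb_h(\mathcal{F})$, and under those of (ii) the Hodkinson--Otto conclusion for $\Forb_e(\mathcal{F})$. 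The content to be added is coherence: the individually chosen $f(p)$ need not satisfy $f(q)=f(p_1)\circ f(p_2)$ on coherent triples $(p_1,p_2,q)$, and the task is to repair this while staying inside $\mathcal{C}$.

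For the coherentification I would insert, at the stage where the extensions are chosen, the Mackey-range construction of Lemma~\ref{L:spec}. Let $F$ be the free group on the set $\Part(A)$, with homomorphism $\pi\colon F\to G$ sending each generator $x_p$ to $f(p)$. Coherence amounts to forcing the relators $x_q^{-1}x_{p_1}x_{p_2}$ indexed by coherent triples to act trivially; the obstruction is that a naive quotient of $F$ by these relators need not preserve either the extension property or membership in $\mathcal{C}$. The Mackey-range idea (borrowed from \cite{mackey}) resolves this by replacing the single $G$-structure $B_0$ with a skew product on which $F$ acts so that the coherence relators act as the identity, yielding a finite structure $B\supseteq A$ together with a genuinely coherent map $\phi\colon\Part(A)\to\Aut(B)$ with $\phi(p)\supseteq p$. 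Since $\phi$ is multiplicative on coherent triples, this is exactly coherent EPPA.

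The step that remains, and that I expect to be the main obstacle, is ensuring that the coherentified $B$ still lies in $\mathcal{C}$, i.e.\ that no forbidden pattern is created; here the two parts genuinely diverge. In part (i) the class is $\Forb_h(\mathcal{F})$ with $\mathcal{F}$ finite and freeness is under homomorphisms, so one must avoid homomorphic images of the finitely many structures in $\mathcal{F}$. This is exactly what the profinite-topology argument underlying Herwig--Lascar controls (via the Ribes--Zalesskii theorem on closedness of products of finitely generated subgroups of a free group): it produces a finite quotient of $F$ witnessing $\mathcal{F}$-freeness. The new point is to run this together with the Mackey-range step, so that a single finite quotient simultaneously kills the coherence relators and keeps $B\in\Forb_h(\mathcal{F})$; this is where finiteness of $\mathcal{F}$ and the Fra\"{i}ss\'{e} hypothesis are used. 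In part (ii) each $F\in\mathcal{F}$ is a Gaifman clique, hence irreducible, so any embedded copy of a forbidden structure must lie wholly on one side of a free amalgam and cannot straddle the amalgamation base. Consequently the free-amalgamation construction of Hodkinson--Otto creates no new forbidden embedding, the coherent assignment can be read off from the free construction directly, no profinite input is needed, and $\mathcal{F}$ may be infinite.

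Finally I would verify the bookkeeping: that $\phi$ is defined on all of $\Part(A)$ and not merely on a generating set, that each $\phi(p)$ is an automorphism of $B$ extending $p$, and that $\phi$ sends coherent triples to coherent triples, which by the remark following the definition of coherence reduces to checking $\phi(q)=\phi(p_1)\circ\phi(p_2)$ together with only two of the three domain/range conditions. The restriction of $\phi$ to $\Aut(A)$ then furnishes the promised group embedding $\Aut(A)\to\Aut(B)$.
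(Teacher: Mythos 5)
Your overall architecture---deduce (i) and (ii) from coherent strengthenings of Herwig--Lascar and Hodkinson--Otto---is exactly the paper's (Theorems \ref{T:main} and \ref{HOstrengthenedtheorem}), but the mechanism you assign to coherence is not, and this is where the proposal has a genuine gap. In the paper, the Mackey-range construction (Lemma \ref{L:spec}) does \emph{not} create coherence by forcing the relators $x_q^{-1}x_{p_1}x_{p_2}$ to act trivially: it takes as \emph{input} a map $\psi$ into $\Aut(B')$ of a finite structure that is \emph{already coherent}, and the skew product $(A\times \Aut(B'))/\sim$ merely inherits that coherence; its real purpose is to make the extension \emph{special}, which is what later transfers stretchedness (and, via the equivariant homomorphism $B\to B'$, the $\mathcal{F}$-freeness) in Section \ref{S:prm}. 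Your variant---a skew product over the free group $F$ modulo the normal closure $N$ of the coherence relators---leaves unaddressed exactly the obstructions you yourself name: $F/N$ is in general infinite, so finiteness of $B$ is unclear; and, more fundamentally, equality of group elements modulo $N$ does not control the induced partial maps, since a reduced word lying in $N$ equals a product of conjugates of relators only after rewriting, and rewriting can shrink the domain of the associated partial map (possibly to $\emptyset$). So one cannot conclude that such a word acts as a partial identity on $A$, which is precisely what is needed for injectivity of $A\hookrightarrow B$ and for $\phi(p)\supseteq p$; this word-versus-partial-map mismatch is what the ``special extension'' clause (iii) exists to manage. Where the paper actually \emph{creates} coherence is Lemma \ref{L:simple}: domains are closed into set algebras and the extensions are defined canonically, atom by atom, as the unique order-preserving bijections relative to a fixed linear order; this canonical choice is what makes the assignments in Propositions \ref{P:nofr} and \ref{P:strong}, and the permutations $\theta^p_u$ in Section \ref{coherentGEPPAsection}, coherent.

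Two further misattributions. For (i), the paper's control of $\mathcal{F}$-freeness does not pass through the profinite topology or Ribes--Zalesskii; it comes from the stretched-structure and type machinery of Proposition \ref{P:strong} together with the homomorphism $B\to B'$ supplied by Lemma \ref{L:spec}. You also omit the step that verifies the hypothesis of Theorem \ref{T:main}: the paper takes $M$ to be the Fra\"{i}ss\'{e} limit of $\Forb_h(\mathcal{F})$ and uses homogeneity to extend every $p\in\Part(A)$ to an automorphism of $M$---this is where the Fra\"{i}ss\'{e} hypothesis enters, not in any quotient argument. For (ii), the Hodkinson--Otto construction is not a free-amalgamation construction: it is the valuation construction $C=\bigl\{(b,\chi)\bigr\}$ in which fibres over large sets are barred from being cliques, and the deduction that $\Forb_e(\mathcal{F})$ is preserved goes through Gaifman clique faithfulness---any embedded $Q\in\mathcal{F}$ in the extension is moved into $A$ by an automorphism, contradicting $A\in\Forb_e(\mathcal{F})$. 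Your ``forbidden cliques cannot straddle a free amalgam'' argument is, in substance, the proof of Lemma \ref{freeamalgamgaifmancliques} characterizing free amalgamation classes, not the proof of part (ii).
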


The theorem above will follow from Theorems~\ref{T:main} and \ref{HOstrengthenedtheorem} stated below in the introduction. 

We first discuss several applications of Theorem \ref{forbclasses} to the automorphism groups of some Fra\"{i}ss\'{e} limits, 
and then give its proof later in this section. The automorphism group $\Aut(M)$ of a countably infinite structure $M$ is equipped 
with the \textit{pointwise convergence topology}. 
The basic open sets of this topology are the cosets of pointwise stabilisers of finite subsets of $M$. Thus, a subset $H \subseteq \Aut(M)$ is dense if 
for every $g\in \Aut(M)$ and every finite $A\subseteq M$ there is an $h\in H$ such that $g\restriction_A=h\restriction_A$. 
In Theorem \ref{coherentEPPADLFsubgrouptheorem} below, 
we show that coherent EPPA gives rise to the existence of a dense locally finite subgroup of the automorphism group of a Fra\"{i}ss\'{e} limit. Thus, 
Theorem~\ref{forbclasses}(i) together with Theorem \ref{coherentEPPADLFsubgrouptheorem} yield the following result.

\begin{corollary}\label{C:fr}
Let $\mathcal F$ be a finite family of finite $\mathcal{L}$-structures.  Assume $\Forb_h(\mathcal{F})$ is a Fra\"{i}ss\'{e} class with the Fra\"{i}ss\'{e} limit 
$M$. Then $\Aut({M})$ contains a dense locally finite subgroup.
\end{corollary}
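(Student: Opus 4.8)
The plan is to factor Corollary~\ref{C:fr} through two inputs. By Theorem~\ref{forbclasses}(i) the class $\Forb_h(\mathcal{F})$ has coherent EPPA, and Theorem~\ref{coherentEPPADLFsubgrouptheorem} (stated below) turns coherent EPPA for a Fra\"{i}ss\'{e} class into a dense locally finite subgroup of the automorphism group of its limit. Granting these, there is nothing more to do, so the content I would actually present is the passage from coherent EPPA to the subgroup.

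For that, first I would fix an enumeration $M=\{m_0,m_1,\dots\}$ and build an increasing chain of finite substructures $A_0\subseteq A_1\subseteq\cdots$ of $M$ with $\bigcup_n A_n=M$, together with coherent maps extending partial automorphisms. Starting from $A_0=\langle m_0\rangle$, at stage $n$ I would first adjoin the next point, setting $A_n^{+}=\langle A_n\cup\{m_{n+1}\}\rangle\subseteq M$, and then apply coherent EPPA to $A_n^{+}$ to obtain a structure $B_n\in\Forb_h(\mathcal{F})$ with $A_n^{+}\subseteq B_n$ and a coherent $\phi_n\colon\Part(A_n^{+})\to\Aut(B_n)$ with $\phi_n(p)\supseteq p$. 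Since $B_n$ lies in the class and $M$ is its Fra\"{i}ss\'{e} limit, universality and homogeneity let me realize $B_n$ as a substructure of $M$ by an embedding fixing $A_n^{+}$ pointwise; I then set $A_{n+1}:=B_n$. The presence of $m_{n+1}$ guarantees $\bigcup_n A_n=M$.

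Next I would assemble the subgroup. An automorphism $g\in\Aut(A_n)$ is, viewed inside $A_n^{+}$, a partial automorphism with domain and range equal to $A_n$; for $g,h\in\Aut(A_n)$ the triple $(g,h,g\circ h)$ is coherent in $\Part(A_n^{+})$, so coherence of $\phi_n$ forces $\phi_n(g\circ h)=\phi_n(g)\circ\phi_n(h)$. Hence $\psi_n:=\phi_n\restriction\Aut(A_n)$ is an injective group homomorphism $\Aut(A_n)\to\Aut(A_{n+1})$ with $\psi_n(g)\supseteq g$. The direct limit $G=\varinjlim(\Aut(A_n),\psi_n)$ is a directed union of finite groups, hence locally finite, and since each $\psi_n(g)$ extends $g$ the iterated images of a fixed element are nested automorphisms whose union is an automorphism of $M=\bigcup_k A_k$; this yields an injective homomorphism $G\to\Aut(M)$ with locally finite image $\hat G$. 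For density I would take $g\in\Aut(M)$ and finite $A\subseteq M$, choose $n$ with $A\cup g(A)\subseteq A_n^{+}$, and observe that $p:=g\restriction\langle A\rangle$ is a partial automorphism of $A_n^{+}$; then $\phi_n(p)\in\Aut(A_{n+1})$ belongs to $\hat G$ and agrees with $g$ on $A$.

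The genuinely hard part is Theorem~\ref{forbclasses}(i) itself, where the combinatorial work of producing coherent EPPA is done. Within the present argument, the delicate point is reconciling the two competing demands: local finiteness requires that the extensions at successive stages compose into finite groups, while density requires that every partial---not merely total---automorphism be realized by a group element. These are exactly reconciled by coherence, which makes each $\psi_n$ a homomorphism and keeps every chosen extension $\phi_n(p)$ inside the single finite group $\Aut(A_{n+1})$ that embeds compatibly into all later stages; I would verify with care that adjoining points through the $A_n^{+}$ step does not disturb this, precisely because automorphisms of $A_n$ are handled as partial automorphisms of $A_n^{+}$.
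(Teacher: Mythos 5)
Your proposal is correct and takes essentially the same route as the paper: Corollary~\ref{C:fr} is obtained by combining Theorem~\ref{forbclasses}(i) with the chain-and-direct-limit construction of Proposition~\ref{coherentEPPADLFsubgrouptheorem}, in which coherence is exactly what turns the EPPA extension maps into compatible embeddings of finite groups. The only (harmless) difference is bookkeeping: the paper enumerates all finite partial automorphisms of $M$ and takes finitely generated subgroups $H_{i+1}=\langle \phi(H_i\cup\{p_i\})\rangle\leq\Aut(A_{i+1})$, whereas you adjoin points of $M$ one at a time, take the full finite groups $\Aut(A_{n+1})$ as the terms of the directed system, and verify density afterwards.
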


We now discuss applications of the second part of Theorem \ref{forbclasses}. We first state the following observation, whose proof is given in 
Lemma~\ref{freeamalgamgaifmancliques} below.

\textbf{Fact.} \textit{A class $\mathcal{C}$ of a finite $\mathcal{L}$-structures is a free amalgamation class if and only if 
$\mathcal{C}=\Forb_e(\mathcal{F})$ for some family $\mathcal{F}$ of Gaifman cliques.}

This observation yields several interesting applications of Theorem \ref{forbclasses}(ii) to free amalgamation classes, and consequently to the automorphism 
group of a \textit{free homogeneous} relational structure. By a free homogeneous structure we understand the Fra\"{i}ss\'{e} limit of a free amalgamation 
class---see 
Section \ref{freeamalgamationsection} for the definitions. We collect the main results of this paper on free homogeneous structures in the following statement. 
See Theorems \ref{DLFsubgrouptheorem} and \ref{amplegenericstheorem} below for the details.

\begin{corollary}
The automorphism group of a free homogeneous $\mathcal{L}$-structure contains a dense locally finite subgroup, and has ample generics and the small index property.
\end{corollary}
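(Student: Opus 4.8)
The plan is to derive the three conclusions---dense locally finite subgroup, ample generics, and the small index property---from coherent EPPA for free amalgamation classes, which is supplied by the Fact together with Theorem~\ref{forbclasses}(ii). First I would invoke the Fact: every free amalgamation class $\mathcal{C}$ equals $\Forb_e(\mathcal{F})$ for a family $\mathcal{F}$ of Gaifman cliques. Theorem~\ref{forbclasses}(ii) then gives that $\mathcal{C}$ has coherent EPPA. Thus a free homogeneous structure $M$ is the Fra\"{i}ss\'{e} limit of a class with coherent EPPA, and all three statements should follow from this single structural input by appealing to the machinery announced earlier (Theorem~\ref{coherentEPPADLFsubgrouptheorem}) and to standard consequences of EPPA-type properties.

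For the dense locally finite subgroup, I would feed the coherent EPPA of $\mathcal{C}$ directly into Theorem~\ref{coherentEPPADLFsubgrouptheorem}, which asserts exactly that coherent EPPA for a Fra\"{i}ss\'{e} class yields a dense locally finite subgroup of the automorphism group of its limit. The key point that coherence buys us here---and the reason ordinary EPPA does not suffice---is that when we build an increasing chain $A_0 \subseteq A_1 \subseteq \cdots$ of coherent EPPA-extensions, the coherent maps $\phi_n\colon \Part(A_n)\to \Aut(A_{n+1})$ compose compatibly, so the finite groups $\phi_n(\Aut(A_n))$ fit into a directed system whose union is a genuine \emph{locally finite} subgroup of $\Aut(M)$, dense because every finite partial isomorphism of $M$ is realised inside some $A_n$ and then extended. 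I would simply cite Theorem~\ref{coherentEPPADLFsubgrouptheorem} rather than reprove this.

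For ample generics and the small index property, I would argue that coherent EPPA, together with the fact that a free amalgamation class has the amalgamation property with a canonical (free) amalgam, gives the ample generic automorphisms via the standard criterion: the class of systems of partial automorphisms has EPPA and a suitable joint embedding/amalgamation property (the free amalgam provides a canonical way to amalgamate finite systems of maps), so the relevant spaces of tuples of automorphisms have comeager orbits under the diagonal conjugation action. Coherence is what lets us extend a whole tuple of partial automorphisms \emph{simultaneously and compatibly}, which is precisely the ingredient needed to verify the amalgamation property for the diagonal action that yields ample generics. Once ample generics are established, the small index property follows by the now-standard implication (ample generics $\Rightarrow$ small index property) for automorphism groups of countable structures. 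I would route these through Theorems~\ref{DLFsubgrouptheorem} and \ref{amplegenericstheorem}, whose statements package exactly these deductions.

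The main obstacle I expect is the ample-generics step, specifically verifying the amalgamation property for finite tuples (systems) of partial automorphisms in the free-amalgamation setting. Extending a single partial automorphism coherently is handled by Theorem~\ref{forbclasses}(ii), but ample generics requires controlling $n$-tuples $(p_1,\dots,p_n)$ of partial automorphisms simultaneously, and showing that two such systems agreeing on a common substructure can be amalgamated while preserving $\mathcal{F}$-freeness. Here the Gaifman-clique structure of $\mathcal{F}$ is essential: free amalgamation introduces no new tuples satisfying any relation across the two copies, so no forbidden Gaifman clique can be created in the amalgam, which is what keeps us inside $\Forb_e(\mathcal{F})$. Making the coherent extension respect this multi-map amalgamation---rather than just extending maps one at a time---is the delicate point, and it is exactly what coherence, as opposed to plain EPPA, is designed to deliver; I would lean on the coherent map $\phi$ being a homomorphism of families of partial bijections so that the extensions of the $p_i$ and of all their composites are mutually compatible.
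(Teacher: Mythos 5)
Your overall route is the same as the paper's: the corollary is simply a packaging of Corollary~\ref{DLFsubgrouptheorem} and Theorem~\ref{amplegenericstheorem}, with the small index property then following from ample generics by the standard Kechris--Rosendal/HHLS implication. Your treatment of the dense locally finite subgroup is accurate: free amalgamation classes have coherent EPPA by Lemma~\ref{freeamalgamgaifmancliques} and Theorem~\ref{forbclasses}(ii), and coherence is exactly what makes the maps $\phi_i\colon H_i\to H_{i+1}$ in Proposition~\ref{coherentEPPADLFsubgrouptheorem} group embeddings, so that the direct limit is a locally finite dense subgroup.

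However, your explanation of the ample generics step misattributes the key mechanism. You claim that coherence is ``precisely the ingredient needed to verify the amalgamation property for the diagonal action'' and that handling tuples of partial automorphisms simultaneously is ``exactly what coherence, as opposed to plain EPPA, is designed to deliver.'' This is not correct, and it is not how the paper's proof of Theorem~\ref{amplegenericstheorem} proceeds. Plain EPPA already extends all partial automorphisms of a finite $A$ simultaneously into one finite $B$ --- that is the content of EPPA itself --- and coherence, which concerns preservation of compositions, gives no help in amalgamating two \emph{systems} of automorphisms over a common base. In the paper, that amalgamation is supplied entirely by the free amalgamation property: if $\alpha\in\Aut(B')$ and $\beta\in\Aut(C)$ agree on $A$, then $\alpha\cup\beta$ is an automorphism of the free amalgam $D$ of $B'$ and $C$ over $A$, precisely because no relation in $D$ crosses the two sides. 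The proof then shows the finite subsets of $M$ form an amalgamation base in the sense of Ivanov/HHLS --- EPPA for the extension condition, free amalgamation for the gluing condition --- and concludes by Ivanov's Proposition~3. So coherence is needed only for the dense locally finite subgroup; for ample generics (and hence the small index property) plain EPPA plus freeness suffice. If you tried to carry out the ample generics argument by leaning on coherence in the way you describe, that step would not go through, since coherence says nothing about reconciling two different extensions over a common substructure.
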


The literature has interesting results on the automorphism group of free homogeneous structures; see, for example, \cite{ivanov}, \cite{macphersonthomas}, \cite{macphersontent}, and Macpherson's survey \cite{macphersonsurvey}.

Theorem~\ref{forbclasses}(i) follows from a more precise Theorem~\ref{T:main} below.

Theorem \ref{forbclasses} is based on the sharper statements of Herwig--Lascar Theorem and Hodkinson--Otto Theorem we obtain in this paper. We now state the strengthened version of the former theorem. 

\begin{theorem}\label{T:main}
Let $\mathcal F$ be a finite family of finite $\mathcal{L}$-structures. Let $A$ be a
finite $\mathcal F$-free (under homomorphisms) $\mathcal{L}$-structure. If there exists a (possibly infinite) $\mathcal F$-free $\mathcal{L}$-structure $M\supseteq A$ such that each element of $\Part(A)$ extends to an
automorphism of $M$, then there exists a finite $\mathcal{L}$-structure $B\supseteq A$ and a map $\phi: \Part(A) \to {\rm Aut}(B)$ such that 
\begin{enumerate}[noitemsep, topsep=0pt]
\item[(i)] $p \subseteq \phi(p)$ for each $p\in \Part(A)$;

\item[(ii)] $\phi$ is coherent; 

\item[(iii)] $B$ is $\mathcal F$-free. 
\end{enumerate}
\end{theorem}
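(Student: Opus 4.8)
The plan is to follow the architecture of the Herwig--Lascar proof but to reorganize it so that all the extending automorphisms of $B$ arise as the values of a single group action, which forces coherence automatically, and to use the structure $M$ only through a homomorphism $B\to M$, which forces $\mathcal F$-freeness for free. I would begin by isolating two soft but decisive reductions. For $\mathcal F$-freeness: if $B$ can be built together with a homomorphism $\rho\colon B\to M$ such that each relation of $B$ is contained in the $\rho$-pullback of the corresponding relation of $M$, then any homomorphism $h\colon F\to B$ with $F\in\mathcal F$ composes with $\rho$ to give a homomorphism $F\to M$, contradicting the $\mathcal F$-freeness of $M$; so (iii) reduces to producing such a $\rho$. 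For coherence: let $\Gamma$ be the group presented by generators $[p]$, one for each $p\in\Part(A)$, and relations $[p_1][p_2]=[q]$ for every coherent triple $(p_1,p_2,q)$. Then $p\mapsto[p]$ is a coherent map $\Part(A)\to\Gamma$, so if $\Gamma$ can be realized as automorphisms of a finite $B\supseteq A$ in such a way that $[p]$ restricts to $p$ on $A$, then $\phi(p):=$ (action of $[p]$) is a composition of coherent maps, hence coherent, and extends $p$, giving (i) and (ii).

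Everything then reduces to producing a finite $\Gamma$-space $B\supseteq A$ on which $[p]$ acts as an extension of $p$ and which admits a compatible homomorphism $\rho\colon B\to M$. The obstruction is immediate: after choosing for each $p$ an extension $g_p\in\Aut(M)$ and setting $G=\langle g_p\rangle\le\Aut(M)$, the assignment $[p]\mapsto g_p$ does not define a homomorphism $\Gamma\to\Aut(M)$, because for a coherent triple one has $[p_1][p_2]=[q]$ in $\Gamma$ while in general $g_{p_1}g_{p_2}\ne g_q$ in $G$; thus $\Gamma$ cannot simply be made to act on $M$ itself. The device that repairs this is the Mackey range (Lemma~\ref{L:spec}): regarding the words-to-$G$ assignment as a cocycle over the $G$-action on the relevant $G$-invariant part $X\subseteq M$, I would form the skew product of $X$ with $\Gamma$ and pass to the quotient by the $G$-orbits. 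On this quotient the group $\Gamma$ acts genuinely, the projection to $X\subseteq M$ persists as a homomorphism $\rho$ intertwining the $\Gamma$-action with the $g_p$, so that $\Gamma$ acts by automorphisms preserving the pullback relations, and the marked copy of $A$ is moved by $[p]$ exactly as $p$ prescribes.

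The main obstacle, and the step I expect to require real work, is finiteness together with the verification that the quotient does the job. A priori the Mackey construction lives over all of $M$ and over the infinite group $\Gamma$, so it must be cut down to a finite structure $B$; this is where the finiteness of $\mathcal L$ and of $\mathcal F$, the bound on arities, and a compactness or finite-support argument (as in the original Herwig--Lascar finiteness step) have to be imported, while simultaneously checking that the cut-down preserves the pullback relation to $M$, the faithfulness needed for $[p]$ to extend $p$, and the group relations defining $\Gamma$. Once a finite $\Gamma$-space $B$ with the homomorphism $\rho\colon B\to M$ is in hand, properties (i)--(iii) follow from the two reductions above: (i) and (ii) from the coherent composite $\Part(A)\to\Gamma\to\Aut(B)$, and (iii) from $\rho$.
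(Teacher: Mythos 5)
Your two preliminary reductions are sound: for total automorphisms the domain/range clauses in the definition of coherence are vacuous, so a coherent map $\Part(A)\to\Aut(B)$ is exactly a map factoring through a group homomorphism from your presented group $\Gamma$; and $\mathcal F$-freeness under homomorphisms does pull back along any homomorphism into an $\mathcal F$-free structure. You have also correctly identified the Mackey skew-product/range construction as the device that turns incoherently chosen extensions into a genuine group action---this is precisely Lemma~\ref{L:spec} of the paper, including the formula for the action by right multiplication in the group coordinate and the induced equivariant homomorphism. The difficulty is that you deploy this device in the wrong place, and what remains is not a fixable technicality but the entire content of the theorem.

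The gap is the finiteness step, which you defer to ``a compactness or finite-support argument (as in the original Herwig--Lascar finiteness step).'' First, the object you build is genuinely infinite: the map $[x,g]\mapsto g^{-1}(x)$ is well defined on equivalence classes, so whenever some point of $A$ has an infinite orbit under $G=\langle g_p\rangle\le\Aut(M)$ (which one cannot avoid in general---being able to choose the $g_p$ inside a finite, or locally finite, subgroup is essentially the \emph{conclusion} of the theory, cf.\ Proposition~\ref{DLFsubgroupimpliesEPPA} versus Theorem~\ref{coherentEPPADLFsubgrouptheorem}, not a permissible input), the quotient has infinitely many classes. Second, no soft cut-down can prune this infinite $\Gamma$-space to a finite substructure that is still closed under the action, still extends every $p\in\Part(A)$, and still maps homomorphically to $M$; if such an argument existed, the combinatorial core of Herwig--Lascar would be superfluous. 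Third, the attribution is backwards: neither in Herwig--Lascar nor in this paper is finiteness obtained by truncating an infinite construction over $M$. The actual order is the opposite of yours: one first constructs, by hand, a \emph{finite} ${\mathcal F}^+$-free extension $B'$ of (a stretched version of) $A$ in which the strong partial isomorphisms extend coherently---this is Proposition~\ref{P:strong}, whose proof is the stretched-structures/types/strongness machinery culminating in the counting Lemma~\ref{L:fintype}---and only then applies the Mackey construction of Lemma~\ref{L:spec} with the \emph{finite} group $\Aut(B')$, so that $B=(A\times\Aut(B'))/{\sim}$ is finite automatically. Accordingly, $\mathcal F$-freeness of $B$ comes from the homomorphism $B\to B'$ into a finite $\mathcal F$-free structure built abstractly (not inside $M$); no homomorphism $B\to M$ is ever produced, and your insistence on producing one is exactly what forces you into the infinite setting. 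As it stands, your proposal reduces the theorem to its hardest part and leaves that part unproved.
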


The strengthening of Herwig--Lascar \cite[Theorem 3.2]{herwiglascar} consists of point (ii)
in Theorem~\ref{T:main} ensuring coherence of the extension
procedure. The structure $B$ which makes Theorem~\ref{T:main} true
is identical to the structure constructed in \cite{herwiglascar}; the
extensions $\phi(p)$ constructed in \cite{herwiglascar} are underdetermined;
by making additional choices in their definitions one forces the
extensions to fulfil conditions from the conclusion of
Theorem~\ref{T:main}. 

We next show how Theorem \ref{forbclasses}(i) follows from Theorem \ref{T:main}. Recall that the \textit{age} of a structure $M$ is the class of all finite substructures embeddable in $M$.

\textbf{Proof of Theorem \ref{forbclasses}(i).}
Let $\mathcal{L}$ be a finite relational language, and $\mathcal{F}$ be a finite family of finite $\mathcal{L}$-structures. Suppose that $\Forb_h(\mathcal{F})$ is an Fra\"{i}ss\'{e} class. Then by Fra\"{i}ss\'{e}'s Theorem there is a countably infinite homogeneous structure $M$ with $\Age(M)=\Forb_h(\mathcal{F})$. Thus, $M$ is $\mathcal{F}$-free under homomorphisms. Let $A\in \Forb_h(\mathcal{F})$, and view $A$ as a substructure of $M$. By homogeneity of $M$, every element of $\Part(A)$ extends to an automorphism of $M$. Therefore, by Theorem \ref{T:main} there exists a finite $B \in \Forb_h(\mathcal{F})$ with $A\subseteq B$, and a coherent map $\phi: \Part(A) \to {\rm Aut}(B)$ such that $\phi(p)$ extends $p$ for each $p\in \Part(A)$. So the class $\Forb_h(\mathcal{F})$ has coherent EPPA. \hfill $\blacksquare$

We now state the strengthened version of the Hodkinson--Otto extension theorem \cite[Theorem 9]{hodkinsonotto}. First, we introduce the following notion; see \cite[Definition 8]{hodkinsonotto}. 

\begin{definition}\rm\label{Gaifman clique faithful EPPA def}
An extension $B$ of a structure $A$ is called \textit{Gaifman clique faithful} if for every Gaifman clique $Q \subseteq B$, there is $g\in {\rm Aut}(B)$ such that $g(Q)\subseteq A$.
\end{definition}

\begin{theorem}\label{HOstrengthenedtheorem}
Let $A$ be a finite $\mathcal{L}$-structure. Then there exists a finite $\mathcal{L}$-structure $B\supseteq A$ and a map $\phi: \Part(A) \to \Aut(B)$ such that 
\begin{enumerate}[noitemsep, topsep=0pt]
\item[(i)] $p \subseteq \phi(p)$ for each $p\in \Part(A)$;

\item[(ii)] $\phi$ is coherent; 

\item[(iii)] $B$ is a Gaifman clique faithful extension of $A$. 
\end{enumerate}
\end{theorem}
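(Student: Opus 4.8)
The plan is to separate the two features of the desired extension---plain extendability together with Gaifman clique faithfulness on the one hand, and coherence on the other---and to obtain them in two stages. First I would invoke the (non-coherent) Hodkinson--Otto theorem \cite[Theorem 9]{hodkinsonotto} to produce a finite $\mathcal{L}$-structure $B_0\supseteq A$ that is a Gaifman clique faithful EPPA-extension of $A$, and then, choosing for each $p$ one of the automorphisms that extends it, a map $\psi_0\colon\Part(A)\to\Aut(B_0)$ with $p\subseteq\psi_0(p)$. This $\psi_0$ need not be coherent; the whole task is to upgrade it while preserving finiteness and clique faithfulness, and this is where the new ingredient enters. (One could alternatively insert the coherence directly into the Hodkinson--Otto construction, where the extensions are defined only up to certain choices; I will instead argue modularly.)

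For the coherence upgrade I would use the Mackey range mechanism (Lemma~\ref{L:spec}). The key structural observation is that $P:=\Part(A)$ is closed under composition and inverse and is exactly the morphism set of the groupoid $\mathcal{P}$ whose objects are the substructures of $A$ and whose arrows $U\to V$ are the isomorphisms; a coherent triple $(p_1,p_2,q)$ with $q=p_1\circ p_2$ is precisely a composable pair in $\mathcal{P}$, so a coherent $\phi\colon P\to\Aut(B)$ is the same as a functor from $\mathcal{P}$ to the one-object groupoid $\Aut(B)$ restricting to each $p$ on $A\subseteq B$. The failure of $\psi_0$ to be such a functor is measured by the defect $(p_1,p_2)\mapsto \psi_0(q)^{-1}\psi_0(p_1)\psi_0(p_2)$ on composable pairs, a cocycle-type obstruction, and the Mackey range straightens it. Starting from the free group $\mathbb{F}$ on the set $P$ with its canonical homomorphism $\pi\colon\mathbb{F}\to G:=\Aut(B_0)$ sending $[p]\mapsto\psi_0(p)$, it produces a finite set $W$ carrying a $G$-action, an equivariant projection $\rho\colon W\to B_0$, a distinguished copy $A\hookrightarrow W$ over $A\subseteq B_0$, and a coherent map $\phi\colon P\to\Sym(W)$ with $p\subseteq\phi(p)$; finiteness survives because the vertex groups $\Aut(U)$ of $\mathcal{P}$ are finite. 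I would then pull the relations of $B_0$ back along $\rho$ to define the $\mathcal{L}$-structure $B$ on $W$. Since each $\phi(p)$ and each $g\in G$ cover an automorphism of $B_0$, they preserve the pulled-back relations, hence act as automorphisms of $B$, and $\rho\colon B\to B_0$ becomes a homomorphism.

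With $B$, $\phi$, and $\rho$ in hand, (i) and (ii) hold by construction, so it remains to verify (iii). Given a Gaifman clique $Q\subseteq B$, I would first argue that $Q$ lies inside a single group-indexed copy of $B_0$: since distinct copies are glued only along the distinguished copy of $A$ and every relation is pulled back from $B_0$, no Gaifman edge can join points of different copies except through that shared part, so a clique cannot straddle copies. On that one copy $\rho$ is an isomorphism onto $B_0$, carrying $Q$ to a Gaifman clique of $B_0$; clique faithfulness of $B_0$ moves the latter into $A$ by some $g_0\in\Aut(B_0)$, and lifting $g_0$ through the $G$-action on $W$ (so that $\rho\circ g=g_0\circ\rho$) yields $g\in\Aut(B)$ with $g(Q)\subseteq A$. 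The main obstacle, and exactly the point forcing the Mackey range rather than ad hoc choices, is the \emph{global} nature of coherence: the extensions $\phi(p)$ cannot be fixed one partial automorphism at a time because the triples $(p_1,p_2,q)$ overlap and impose simultaneous constraints. The gluing in the construction must therefore be arranged so that the pulled-back structure both turns every $\phi(p)$ into a genuine automorphism and creates no Gaifman clique spanning two copies---either failure would wreck coherence or clique faithfulness---and verifying these two compatibility properties is where the real work lies.
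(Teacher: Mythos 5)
Your plan runs in the wrong direction, and the step you rely on to ``upgrade'' coherence is not available. Lemma~\ref{L:spec} (the Mackey-range lemma) has coherence as a \emph{hypothesis}: it takes a finite extension $B'$ together with an already coherent map $\psi$ into $\Aut(B')$ and outputs a special extension with a coherent map; the coherence of the output map $\phi(p)([x,g])=[x,g\circ\psi(p)^{-1}]$ is read off directly \emph{from} the identity $\psi(q)=\psi(p_1)\circ\psi(p_2)$. If you feed it the non-coherent selection $\psi_0$ coming from the plain Hodkinson--Otto theorem, the required identity fails: for a general point $[x,g]$ of $(A\times G)/\sim$ the two sides are $[x,\,g\psi_0(q)^{-1}]$ and $[x,\,g\psi_0(p_2)^{-1}\psi_0(p_1)^{-1}]$, and these classes coincide only if some word $w\in W(P)$ fixes $x$ and satisfies the appropriate group identity involving the defect $\psi_0(q)^{-1}\psi_0(p_1)\psi_0(p_2)$ --- there is no reason for such a word to exist once $\psi_0(q)\neq\psi_0(p_1)\psi_0(p_2)$. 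The ``cocycle straightening'' you attribute to the Mackey range is not something the construction does: it transports coherence, it does not create it. Nowhere does the paper upgrade a non-coherent EPPA map post hoc (a closely related upgrade problem is in fact posed as an open question after Proposition~\ref{DLFsubgroupimpliesEPPA}); coherence is always manufactured inside the constructions by canonical, order-preserving choices, via Lemma~\ref{L:simple}.

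The paper's proof goes in the opposite order. One first obtains a \emph{coherent} EPPA-extension $B$ of $A$ from Proposition~\ref{P:nofr} (no clique faithfulness at this stage), and then runs the Hodkinson--Otto valuation construction on top of $B$: the points of $C$ are pairs $(b,\chi)$ with $\chi$ a $b$-valuation on the family $\mathcal U$ of large sets, relations are imposed only on generic tuples, and each $p\in\Part(A)$ is extended by permutations $\theta^p_u$ of $[u]$ completed in an order-preserving way (Lemma~\ref{gcompatibleextension}). Coherence of $p\mapsto\hat p$ (Lemma~\ref{coherence}) is then \emph{deduced} from coherence of the map into $\Aut(B)$: a coherent triple $(p_1,p_2,q)$ lifts to $g_1,g_2,h\in\Aut(B)$ with $h=g_1\circ g_2$ exactly, and that exact group identity is what makes the key computation $\theta^{q}_u=\theta^{p_1}_{g_2(u)}\circ\theta^{p_2}_u$ go through. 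Your $B_0$ from the plain Hodkinson--Otto theorem cannot supply such lifts, so this route is closed to you. Separately, your clique-faithfulness verification is also broken: if you pull the relations of $B_0$ back along $\rho$, then any points of $B$ whose projections lie in a common relation of $B_0$ become Gaifman-adjacent in $B$, regardless of which ``copies'' they lie in, so cliques certainly can straddle copies; destroying such false cliques is exactly what the valuation/genericity mechanism (Lemma~\ref{genericsmallsets} and the definition of the structure on $C$) is for, and the pullback structure cannot replace it.
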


Again the strengthening we add in the theorem above is the coherence of the extension procedure of partial automorphisms, and as before, the structure $B$ in Theorem \ref{HOstrengthenedtheorem} above is identical to the one constructed in \cite{hodkinsonotto}. 

We next show how Theorem \ref{forbclasses}(ii) follows from Theorem \ref{HOstrengthenedtheorem}.

\textbf{Proof of Theorem \ref{forbclasses}(ii).}
Let $\mathcal{F}$ be a family of Gaifman cliques. Take a structure $A\in \Forb_e(\mathcal{F})$, and consider the coherent EPPA-extension $B$ of $A$ guaranteed by Theorem \ref{HOstrengthenedtheorem}. We will show that $B \in \Forb_e(\mathcal{F})$. Suppose for the sake of contradiction that $B \notin \Forb_e(\mathcal{F})$, then there is some Gaifman clique $Q\in 
\mathcal{F}$ and an embedding $h:Q\to B$. By the Gaifman clique faithfulness of $B$, there is $g\in \Aut(B)$ such that $gh(Q)\subseteq A$. This means that the forbidden structure $Q$ embeds in $A$, contradicting $A\in \Forb_e(\mathcal{F})$. Thus, $B \in \Forb_e(\mathcal{F})$. \hfill $\blacksquare$

Theorem~\ref{T:main} has applications to metric spaces that are similar to 
the corollaries stated above. 
Replacing the role of \cite[Theorem 3.2]{herwiglascar} in the proof of \cite[Theorem 2.1]{soleckiisometries} by using Theorem \ref{T:main} instead, one obtains 
the following strengthening of \cite[Theorem 2.1]{soleckiisometries}.

\begin{corollary}
\label{C:metric}
Let $A$ be a finite metric space. There exists a finite metric
space $B$ such that $A\subseteq B$ as metric spaces, each partial
isometry $p$ of $A$ extends to an isometry $\phi(p)$ of $B$ and
the function $\phi$ is coherent.

Moreover, the distances between points in $B$ belong to the
additive semigroup generated by the distances between points in
$A$.
\end{corollary}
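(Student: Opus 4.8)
The plan is to rerun the coding argument behind \cite[Theorem 2.1]{soleckiisometries} essentially verbatim, but to feed it Theorem~\ref{T:main} in place of the Herwig--Lascar theorem \cite[Theorem 3.2]{herwiglascar}; the coherence of $\phi$ is then read off from the coherence supplied by Theorem~\ref{T:main}. First I would set up the coding of finite metric spaces as relational structures. Let $D$ be the set of nonzero distances occurring in $A$, let $M=\max D$ be the diameter of $A$, and let $S$ be the set of those elements of the additive semigroup generated by $D$ that are $\le M$. Since every generator is $\ge \min D>0$, the set $S$ is finite, and $D\subseteq S$. Let $\mathcal L_S$ be the relational language with one binary symbol $R_s$ for each $s\in S$, and code a metric space with distances in $S\cup\{0\}$ by the $\mathcal L_S$-structure in which $R_s(x,y)$ holds exactly when the distance between $x$ and $y$ is $s$. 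Writing $A^{*}$ for the code of $A$, the partial isometries of $A$ are literally the partial automorphisms of $A^{*}$, and this identification is a bijection $\Part(A)\to\Part(A^{*})$ that preserves composition, domains and ranges.

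Next I would introduce the finite family $\mathcal F$ of forbidden $\mathcal L_S$-structures that encodes metricity: the two-point structures carrying two distinct labels on one pair, and the triangles with edge labels $t,u,s\in S$ for which $s>t+u$. As in \cite{soleckiisometries}, $\mathcal F$-freeness under homomorphisms of a finite $\mathcal L_S$-structure is exactly what is needed to read off a metric, and $A^{*}$ is $\mathcal F$-free because $A$ is a metric space. To verify the hypothesis of Theorem~\ref{T:main}, I would exhibit an infinite $\mathcal F$-free structure $M^{*}\supseteq A^{*}$ into which every partial automorphism of $A^{*}$ extends to an automorphism: this is the code of a suitably universal homogeneous metric space with distance set contained in $S$ (the rational-Urysohn-type space for the value set $S$), and here one uses that partial isometries of $A$ extend to isometries of that space. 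Theorem~\ref{T:main} then yields a finite $\mathcal F$-free structure $B^{*}\supseteq A^{*}$ and a coherent map $\phi\colon\Part(A^{*})\to\Aut(B^{*})$ with $p\subseteq\phi(p)$ for all $p$. Reading $B^{*}$ back as a metric space $B$ on the same underlying set---via the truncation at $M$ of the path metric induced by the labelled edges, exactly as in \cite{soleckiisometries}---produces a finite metric space $B\supseteq A$ all of whose distances lie in $S$, hence in the semigroup generated by the distances of $A$; this is the ``Moreover'' clause.

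Finally I would transfer coherence, which is the only genuinely new point. The recovery step gives a composition-, domain- and range-preserving correspondence between $\Aut(B^{*})$ and isometries of $B$ (each automorphism of $B^{*}$ preserves every label, hence preserves the path metric and its truncation), under which the image of each $\phi(p)$ extends the partial isometry $p$. Composing with the bijection $\Part(A)\cong\Part(A^{*})$ turns $\phi$ into a map from the partial isometries of $A$ to the isometries of $B$ extending them. Because both correspondences preserve composition together with domains and ranges, they carry coherent triples to coherent triples; hence coherence of $\phi$ as a map of partial bijections descends to coherence of the resulting extension of partial isometries. The main obstacle is not this bookkeeping but the metric-recovery step itself: one must know that truncating the induced path metric returns the original distances on $A$ (no labelled path shortcuts a pair of points of $A$) and keeps all distances inside $S$. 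This is precisely the content established in \cite[Theorem 2.1]{soleckiisometries} through the choice of $\mathcal F$, so I would quote it rather than reprove it, and the work here reduces to checking that replacing \cite[Theorem 3.2]{herwiglascar} by Theorem~\ref{T:main} leaves that argument intact while adding the coherence of $\phi$.
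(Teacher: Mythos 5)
Your proposal is correct and follows exactly the paper's own route: the paper proves Corollary~\ref{C:metric} precisely by rerunning the proof of \cite[Theorem 2.1]{soleckiisometries} with Theorem~\ref{T:main} substituted for the Herwig--Lascar theorem, which is what you do, and your filling-in of the coding, the forbidden family, the Urysohn-type verification of the hypothesis, and the transfer of coherence through the composition-preserving correspondences is a faithful expansion of the argument the paper leaves implicit.
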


The conclusion of the theorem above gives that $\phi$ constructed
in this theorem when restricted to the isometry group of $A$,
${\rm Iso}(A)$, is a homomorphism, so necessarily an isomorphic
embedding,
\[
{\rm Iso}(A)\hookrightarrow {\rm Iso}(B).
\]
Furthermore, if the distances in $A$ are all rational numbers, then
so are the distances in $B$. Thus, Corollary~\ref{C:metric} yields that the class of all finite metric spaces over rational distances has coherent EPPA. 
The relational language here is 
\[
\mathcal{L}=\{R_r \mid r\in \mathbb{Q},\ r\geq 0\}
\]
where $R_r$ 
is a binary relation symbol. Given a metric space $A$ with a metric $d$ with rational distances, we view $A$ as an $\mathcal{L}$-structure by setting 
$A\models R_r(a,b)$ precisely when $d(a,b)=r$ for all $a,b\in A$.   
Keeping this formalization in mind, Corollary~\ref{C:ur} below follows from Corollary~\ref{C:metric}. It answers 
a question of Vershik, see \cite[6.13(5)]{kechrisrosendal}.

\begin{corollary}\label{C:ur}
The isometry group of the rational Urysohn space ${\mathbb U}_0$ contains a dense locally
finite subgroup.
\end{corollary}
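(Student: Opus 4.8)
The plan is to derive Corollary~\ref{C:ur} from Corollary~\ref{C:metric} by leveraging the explicit description of the rational Urysohn space $\UU_0$ as the Fra\"{i}ss\'{e} limit of the class $\KK$ of all finite metric spaces with rational distances. First I would fix a countable increasing exhaustion $\UU_0 = \bigcup_n A_n$ by finite rational metric subspaces, with each $A_n$ a substructure of $A_{n+1}$; such an exhaustion exists because $\UU_0$ is countable and metric subspaces of a metric space are themselves metric spaces lying in $\KK$. The goal is to build a locally finite subgroup $H \le {\rm Iso}(\UU_0)$ that is dense in the pointwise convergence topology. The strategy is to produce an increasing chain of finite subgroups $G_0 \le G_1 \le G_2 \le \cdots$ of ${\rm Iso}(\UU_0)$ whose union $H = \bigcup_n G_n$ is both locally finite (automatic, as an increasing union of finite groups) and dense.

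The engine for building this chain is the coherence clause of Corollary~\ref{C:metric}. For each finite rational metric space $A$, Corollary~\ref{C:metric} furnishes a finite rational metric space $B \supseteq A$ together with a coherent map $\phi\colon \Part(A) \to {\rm Iso}(B)$ extending each partial isometry. The key observation, already noted after the statement of Corollary~\ref{C:metric}, is that coherence forces the restriction $\phi\restriction {\rm Iso}(A)$ to be a group homomorphism, and moreover the image $\phi(\Part(A))$ is closed under the group operations inherited from coherent triples, so the subgroup of ${\rm Iso}(B)$ generated by $\phi(\Part(A))$ is exactly $\phi(\Part(A))$ together with the identity-closure — in fact coherence guarantees that composing extensions agrees with extending compositions, so the set $\{\phi(p) : p \in \Part(A)\}$ already behaves multiplicatively. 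This is precisely what rules out the blow-up that would occur without coherence: iterating plain EPPA-extensions generates groups of uncontrolled size, whereas here the extensions compose correctly.

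Concretely, I would proceed by a back-and-forth/amalgamation bootstrap. Having embedded $A_n \subseteq \UU_0$ and built a finite group $G_n$ of isometries of $\UU_0$ realizing all partial isometries of $A_n$ on a finite piece, I apply Corollary~\ref{C:metric} to the finite space spanned by $A_{n+1}$ together with the finite support of $G_n$, obtaining a finite rational metric space $B$ and a coherent $\phi$. Since $B \in \KK = \Age(\UU_0)$, universality and homogeneity of $\UU_0$ let me embed $B$ back into $\UU_0$ over the given finite piece; the coherent homomorphism $\phi$ then transports the finite group $\langle \phi(\Part(A_{n+1}))\rangle \le {\rm Iso}(B)$ to a finite group $G_{n+1} \le {\rm Iso}(\UU_0)$ with $G_n \le G_{n+1}$, each element of which extends to a total isometry of $\UU_0$ by homogeneity. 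By arranging that at stage $n$ every partial isometry of $A_n$ is realized by some element of $G_{n+1}$, density follows: given $g \in {\rm Iso}(\UU_0)$ and a finite $F \subseteq \UU_0$, choose $n$ with $F \subseteq A_n$; then $g\restriction F$ is a partial isometry of $A_n$, hence agrees on $F$ with some element of $H$.

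The main obstacle I anticipate is the gluing step that keeps the groups $G_n$ nested inside a single copy of ${\rm Iso}(\UU_0)$. Corollary~\ref{C:metric} produces an abstract finite extension $B$, but to assemble the $G_n$ into one increasing chain of \emph{honest} isometries of $\UU_0$ I must reembed each successive $B$ into $\UU_0$ \emph{compatibly} with the embeddings already chosen and \emph{equivariantly} with respect to the action of $G_n$, so that the old group elements continue to act as restrictions of the new ones. This requires using the homogeneity of $\UU_0$ to extend not merely the inclusion $A_{n+1}\hookrightarrow \UU_0$ but the whole finite partial action of $G_n$ on the support set, and checking that the coherent transport $\phi$ respects this extension so that $G_n \le G_{n+1}$ holds as subgroups of the fixed group ${\rm Iso}(\UU_0)$ rather than merely up to isomorphism. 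Once this compatibility is secured by a careful application of the amalgamation property of $\KK$ at each stage, local finiteness and density of $H=\bigcup_n G_n$ are immediate, completing the proof.
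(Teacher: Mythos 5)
Your overall route---reduce to Corollary~\ref{C:metric} and run a chain construction powered by coherence---is the right one, and it is in spirit what the paper does: it proves the general fact (Proposition~\ref{coherentEPPADLFsubgrouptheorem}) that coherent EPPA for $\Age(M)$ yields a dense locally finite subgroup of $\Aut(M)$, and applies it to ${\mathbb U}_0$. But your implementation has a genuine gap at exactly the point you call the ``main obstacle,'' and the remedy you gesture at (homogeneity plus ``a careful application of the amalgamation property'') does not close it. You insist that each $G_n$ be a finite group of \emph{total} isometries of ${\mathbb U}_0$ with literal inclusions $G_n \leq G_{n+1}$. Two things then go wrong. First, to transport the finite group $\langle \phi(\Part(A_{n+1}))\rangle \leq {\rm Iso}(B)$ into ${\rm Iso}({\mathbb U}_0)$ you need an embedding of $B$ into ${\mathbb U}_0$ over which the \emph{whole group action} extends; homogeneity of ${\mathbb U}_0$ only extends individual isometries of a finite subspace to total isometries, and these separately chosen extensions need not compose correctly---there is no homomorphic section of restriction-to-$B$. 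This equivariant-extension problem is not a consequence of amalgamation and is proved nowhere in the paper. Second, even granting such an equivariant embedding, the new total isometry extending $\phi(g\restriction A')$ for $g\in G_n$ is constrained only on a finite set, so it has no reason to equal $g$ as a function on ${\mathbb U}_0$; hence $G_n \leq G_{n+1}$ fails. (Two smaller slips: a total isometry of ${\mathbb U}_0$ has no ``finite support,'' so you should instead take the $G_n$-orbit of the relevant finite set; and for density you need $n$ with $F \cup g(F) \subseteq A_n$, not merely $F \subseteq A_n$, so that $g\restriction F$ is a partial isometry \emph{of} $A_n$.)

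The paper's proof avoids the obstacle by never producing total isometries at finite stages. There, $H_n$ is a finite group of isometries of the \emph{finite} space $A_n \subseteq {\mathbb U}_0$; coherence of $\phi$ makes $\phi\restriction H_n \colon H_n \to H_{n+1}$ a group embedding with $h \subseteq \phi(h)$; and only the direct limit $H = \varinjlim H_n$ is identified with a subgroup of ${\rm Iso}({\mathbb U}_0)$: a compatible thread $(h_n, \phi_n(h_n), \dots)$ has union a total isometry because $\bigcup_n A_n = {\mathbb U}_0$, and the map sending a thread to this union is an injective homomorphism with dense image. At each finite stage one only re-embeds the abstract finite EPPA-extension into ${\mathbb U}_0$ over a finite substructure (realizing a member of the age, which homogeneity does license) and conjugates the finite group along that embedding---no group action on ${\mathbb U}_0$ itself is ever extended. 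If you restructure your construction in this way, keeping the groups on the finite pieces and passing to ${\rm Iso}({\mathbb U}_0)$ only in the limit, your argument becomes correct and coincides with the paper's.
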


After we proved Corollary~\ref{C:ur}, Rosendal gave an alternative proof of it in \cite[Theorem 10]{rosendal}.

The layout of the paper is as follows. In Section \ref{HLstrengthening} we prove Theorem \ref{T:main}.  In Section \ref{coherentGEPPAsection} we prove Theorem \ref{HOstrengthenedtheorem}. In Section \ref{freeamalgamationsection} we apply Theorem \ref{forbclasses}(ii) to free amalgamation classes. Finally, in Section \ref{automorphismgroupsection} we show that coherent EPPA is sufficient for the existence of a dense locally finite subgroup of the automorphism group of a Fra\"{i}ss\'{e} limit. We also deduce that free homogeneous structures admit ample generics.

The notions of coherent functions and coherent EPPA, the content of Section~\ref{HLstrengthening}, and Corollaries~\ref{C:metric} and \ref{C:ur} 
were found by the second author in the spring of 2007. 
They were written up and posted online in the fall of 2009. 
The content of 
Sections~\ref{coherentGEPPAsection}--\ref{freeamalgamationsection}
were part of the first author's PhD thesis \cite{sinioraPhD} from 2017.

\section{Strengthening of the Herwig-Lascar Extension Theorem}\label{HLstrengthening}

The proof of Theorem~\ref{T:main} is done in three stages. First, in Section~\ref{S:not} in
Proposition~\ref{P:nofr}, one shows that a finite structure $A$ can be
extended to a finite structure $B$ so that all partial
isomorphisms of $A$ extend to automorphisms of $B$ in a coherent
way. The $\mathcal F$-freeness condition is not involved. Then,
using Proposition~\ref{P:nofr}, in Section~\ref{S:str}, one shows in
Proposition~\ref{P:strong} that each finite {\em stretched}
structure $A$ that is $\mathcal F$-free (under homomorphisms), where $\mathcal F$
consists of {\em small} structures, can be extended to a finite
$\mathcal F$-free stretched structure so that each {\em strong}
partial isomorphism of $A$ extends to an automorphism of $B$ in a
coherent way. (All the terms mentioned in the preceding sentence
are defined in Section~\ref{S:str}). Finally, using
Proposition~\ref{P:strong}, one proves Theorem~\ref{T:main} in
Section~\ref{S:prm}. An important ingredient in this last proof is
a lemma that provides a construction of special structures. We
will give a new proof of this lemma based on an extension of the
ideas of Mackey \cite{mackey}.

\subsection{Two lemmas allowing the strengthening}

The following lemma will be used in the proofs of Propositions~\ref{P:nofr} and \ref{P:strong}. 
It is related to \cite[Lemma 4.11]{herwiglascar} and can be concatenated with that lemma to obtain its generalization. 
We will however only use the result below.

\begin{lemma}\label{L:simple}
Let $X$ be a finite set and let $P$ be a set of partial functions
from ${\mathcal P}(X)$ to ${\mathcal P}(X)$. Assume that for each
$p\in P$ there is $\sigma_p\in {\rm Sym}(X)$ such that for each
$a\in {\rm dom}(p)$ we have $p(a) = \sigma_p[a]$. Then there exists
$\phi\colon P\to {\rm Sym}(X)$ such that:
\begin{enumerate}[noitemsep]
\item[(i)] $p(a) = \phi(p)[a]$ for $a\in {\rm dom}(p)$ and

\item[(ii)] $\phi$ is coherent.
\end{enumerate}
\end{lemma}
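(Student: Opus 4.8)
The plan is to make a canonical choice of a realizing permutation for each $p$, exploiting the fact that $p$ already determines most of $\sigma_p$ and pinning down the residual freedom with a fixed linear order on $X$ in a way compatible with composition. First I would fix, once and for all, a linear order on the finite set $X$. For $p\in P$, let $\Pi_p$ be the partition of $X$ into the atoms of the Boolean subalgebra of $\mathcal P(X)$ generated by $\dom(p)$; concretely $\Pi_p$ consists of the nonempty sets $\bigcap_{a\in\dom(p)} a^{\epsilon_a}$, where $a^1=a$ and $a^0=X\setminus a$. Let $\Pi'_p$ be the analogous partition generated by $\range(p)$. Since $\sigma_p[a]=p(a)$ for each $a\in\dom(p)$ and $\sigma_p$ is a permutation, $\sigma_p$ carries the atom $\beta=\bigcap_a a^{\epsilon_a}$ of $\Pi_p$ onto the atom $\bigcap_a p(a)^{\epsilon_a}$ of $\Pi'_p$; this image depends only on $p$, not on the choice of $\sigma_p$. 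Call the resulting bijection $\pi_p\colon\Pi_p\to\Pi'_p$, and note $|\beta|=|\pi_p(\beta)|$ for every block $\beta$.

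I would then define $\phi(p)\in\Sym(X)$ block by block: on each atom $\beta\in\Pi_p$, let $\phi(p)$ be the unique order-preserving bijection from $\beta$ onto $\pi_p(\beta)$, which exists and is unique because the two sets have equal cardinality and $X$ is linearly ordered. Since the atoms partition $X$, this yields a well-defined permutation. Condition (i) is then immediate: every $a\in\dom(p)$ is a union of atoms of $\Pi_p$, and $\phi(p)[\beta]=\pi_p(\beta)=\sigma_p[\beta]$ for each such atom, so $\phi(p)[a]=\sigma_p[a]=p(a)$.

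The heart of the matter is coherence (ii). Let $(p_1,p_2,q)$ be a coherent triple, so $q=p_1\circ p_2$, $\dom(q)=\dom(p_2)$, $\range(q)=\range(p_1)$ and $\range(p_2)=\dom(p_1)$. Since equal generating families give equal Boolean algebras and hence equal atom-partitions, these equalities yield $\Pi_q=\Pi_{p_2}$, $\Pi'_q=\Pi'_{p_1}$ and $\Pi'_{p_2}=\Pi_{p_1}$. Next I would show $\pi_q=\pi_{p_1}\circ\pi_{p_2}$: choosing any realizers $\sigma_{p_1},\sigma_{p_2}$, the product $\sigma_{p_1}\sigma_{p_2}$ realizes $q$, because for $a\in\dom(p_2)$ one has $\sigma_{p_1}\sigma_{p_2}[a]=\sigma_{p_1}[p_2(a)]=p_1(p_2(a))=q(a)$ using $p_2(a)\in\range(p_2)=\dom(p_1)$; and since $\pi_q$ is independent of the realizer, $\pi_q(\beta)=\sigma_{p_1}[\pi_{p_2}(\beta)]=\pi_{p_1}(\pi_{p_2}(\beta))$, where $\pi_{p_2}(\beta)\in\Pi'_{p_2}=\Pi_{p_1}$.

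To finish, I would invoke that a composite of order-preserving bijections is order-preserving and that the order-preserving bijection between two fixed finite sets is unique. On a block $\beta\in\Pi_q=\Pi_{p_2}$, the map $\phi(p_1)\circ\phi(p_2)$ first sends $\beta$ order-preservingly onto $\pi_{p_2}(\beta)\in\Pi_{p_1}$ and then order-preservingly onto $\pi_{p_1}(\pi_{p_2}(\beta))$; hence it restricts to the unique order-preserving bijection $\beta\to\pi_q(\beta)$, which is exactly $\phi(q)\restriction\beta$. As the atoms cover $X$ we get $\phi(q)=\phi(p_1)\circ\phi(p_2)$, and since these images are total permutations the domain/range clauses in the definition of a coherent image triple hold trivially. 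The step I expect to be the main obstacle is exactly the bookkeeping ensuring that the within-block freedom behaves functorially; the order-preserving choice is what forces this, since order-preserving bijections are simultaneously unique and closed under composition, whereas an arbitrary realizer, or a ``lexicographically least'' one, would fail to compose.
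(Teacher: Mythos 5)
Your proposal is correct and follows essentially the same route as the paper's proof: both pass to the Boolean algebra generated by $\dom(p)$, observe that the induced atom-to-atom bijection is determined by $p$ alone (hence independent of the realizer $\sigma_p$), and define $\phi(p)$ via the unique order-preserving bijection on each atom with respect to a fixed linear order on $X$. The only difference is presentational — the paper first extends each $p$ under complements and intersections and leaves the final coherence check as "a calculation," whereas you work directly with the atom partitions and write that calculation out.
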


\begin{proof} Each of the following two formulas extends each
$p\in P$ to a partial bijection $\widetilde{p}$:
\[
\widetilde{p}(X\setminus a) = \sigma_p[X\setminus a] \;\hbox{ where } a\in {\rm dom}(p)\] and
\[\widetilde{p}\big(\bigcap_{i=1}^k a_i\big) = \sigma_p[\bigcap_{i=1}^k
a_i] \;\hbox{ where } a_1,\ldots, a_k\in {\rm dom}(p).
\]
Of course, we let $\widetilde{p}$ be equal to $p$ on ${\rm
dom}(p)$. Note that if for $a\in {\rm dom}(p)$ it happens that
$X\setminus a\in {\rm dom}(p)$, then $\widetilde{p}(X\setminus a)
= p(X\setminus a)$. Similarly if for some $a_1, \dots , a_k\in
{\rm dom}(p)$ we have $\bigcap_{i=1}^k a_i\in {\rm dom}(p)$, then
$\widetilde{p}(\bigcap_{i=1}^ka_i) = p(\bigcap_{i=1}^ka_i)$. Thus,
$\widetilde{p}$ is indeed an extension of $p$. Additionally,
$\widetilde{p}$ is still induced by $\sigma_p$.

Since in the above formulas we have
\[
\widetilde{p}(X\setminus a) = X\setminus \sigma_p[a] = X\setminus
p(a)
\]
and
\[
\widetilde{p}\big(\bigcap_{i=1}^k a_i\big) = \bigcap_{i=1}^k
\sigma_p[a_i]= \bigcap_{i=1}^k p(a_i),
\]
one easily checks that if $p_1, p_2, q\in P$ are coherent, then so
are $\widetilde{p_1}, \widetilde{p_2}, \widetilde{q}$. By
iterating these extension operations we can suppose that the
domain and the range of each $p\in P$ is an algebra of subsets of
$X$. Moreover, $\sigma_p$ still induces $p$ on its domain.

Now fix a linear order of $X$. Let $p\in P$ and let $a$ be an atom
of the algebra that is the domain of $p$. Since $p(a) =
\sigma_p[a]$, we see that $a$ and $p(a)$ have the same number of
points. Define $\phi(p)\in {\rm Sym}(X)$ on points in $a$ to be
the only order preserving bijection from $a$ to $p(a)$. The
conclusion is easily verified by a calculation. In this verification it helps to
notice that if $a$ is an atom of ${\rm dom}(p)$, then $p(a)$ is an
atom of ${\rm range}(p)$.
\end{proof}

We recall from \cite[Definition 2.2]{herwiglascar} the definition of special extensions. 
We first fix some notation. Let $A$ be a structure and let $P\subseteq \Part(A)$. Let $W(P)$ be the set of all
words, including the empty word, in the alphabet $P\cup \{ p^{-1}\mid p\in P\}$. 
For a non-empty word $w\in W(P)$ with $w = p_1^{e_1}p_2^{e_2}\cdots p_n^{e_n}$ for
some $e_1, e_2, \dots, e_n\in \{ 1, -1\}$ and $p_1, p_2, \dots ,
p_n\in P$. 

Let ${\rm dom}(w)$
consist of all $x\in A$ such that $x$ is in the domain of $p_n^{e_n}$ and, 
for all $1\leq i<n$,  $p_{i+1}^{e_{i+1}}(p_{i+2}^{e_{i+2}}\cdots  (p_n^{e_n}(x)))$ 
is in the domain of $p_{i}^{e_{i}}$. For $x\in {\rm dom}(w)$, let  
\[
w(x) = p_1^{e_1}(p_2^{e_2}\cdots  (p_n^{e_n}(x)))
\]
Assume now that $B$
is an extension of $A$ and each $p\in P$ has an extension
$\phi(p)\in {\rm Aut}(B)$. We set
\[
\phi(w) = \phi(p_1)^{e_1}\circ \phi(p_2)^{e_2}\circ\cdots \circ
\phi(p_n)^{e_n}.
\]
For the empty word $w$, we let ${\rm dom}(w) = A$, $w(x)=x$, for each $x\in A$, and 
$\phi(w) = {\rm id}_B$. 
So for each $w\in W(P)$ and each $x\in {\rm dom}(w)$, we have
$\phi(w)(x) = w(x).$

Let $A, B, \phi$ be as above. We say that $B$ is a {\em special extension over $A$ and $\phi$}
if
\begin{enumerate}
\item[(i)] for each $y\in B$ there are $x\in A$ and a word $w\in
W(P)$ with $\phi(w)(x) = y$;

\item[(ii)] for all $y_1, \dots , y_r\in B$ with $R^B(y_1, \dots,
y_r)$ there are $x_1, \dots , x_r\in A$ and a word $w\in W(P)$
such that $y_i = \phi(w)(x_i)$ for $i\leq r$ and $R^A(x_1, \dots ,
x_r)$;

\item[(iii)] for $x_1, x_2\in A$, if $\phi(w)(x_1)=x_2$ for some
word $w\in W(P)$, then there is $v\in W(P)$ such that $\phi(v) =
\phi(w)$ and $x_1$ is in the domain of $v$; in particular, $v(x_1)
= x_2$.
\end{enumerate}

The lemma below is essentially \cite[Proposition 2.3]{herwiglascar}. We will
give a different proof of it that guarantees coherence. It is based on an extension of ideas
of Mackey \cite{mackey}.

\begin{lemma}\label{L:spec}
Let $A$ be a finite structure and let $P \subseteq \Part(A)$. Assume $B'$ is a finite extension of $A$ such
that each $p\in P$ has an extension $\psi(p)\in {\rm Aut}(B')$ with
$\psi$ being coherent. Then there exists a finite extension $B$ of
$A$ such that each $p\in P$ has an extension $\phi(p)\in {\rm
Aut}(B)$ such that $B$ is special over $A$ and $\phi$, and there is a 
homomorphism from $B$ to $B'$ equivariant with respect to $\phi(P)$, and $\phi$ is coherent. 
\end{lemma}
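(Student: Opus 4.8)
The plan is to realize $B$ as the model-theoretic analogue of the \emph{Mackey range} of a cocycle, built as a quotient of a skew product. Let $H=\langle \psi(p): p\in P\rangle\leq \Aut(B')$; since $B'$ is finite, $H$ is a finite group, and because $\psi$ is coherent it extends to $\psi\colon W(P)\to H$ with $\psi(w)=\psi(p_1)^{e_1}\circ\cdots\circ\psi(p_n)^{e_n}$. On the finite set $A\times H$ let $\mathcal{G}$ be the groupoid generated by $P$ acting by the skew rule $p\cdot(y,g)=(p(y),g\,\psi(p)^{-1})$ for $y\in\dom(p)$, and let $\sim$ be the associated orbit equivalence; explicitly, $\sim$ is generated by $(p(y),h)\sim(y,h\,\psi(p))$. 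I set $B=(A\times H)/\!\sim$, with the copy of $A$ given by $x\mapsto[(x,e)]$. I would impose the ``special'' $\mathcal{L}$-structure by declaring $R^B\big([(x_1,h)],\dots,[(x_r,h)]\big)$ exactly when the tuple admits a common second coordinate $h$ with $R^A(x_1,\dots,x_r)$. Finally I define $\phi(p)$ to be left translation $[(x,g)]\mapsto[(x,\psi(p)\,g)]$ and $\pi\colon B\to B'$ by $\pi([(x,h)])=h(x)$, viewing $x\in A\subseteq B'$ and $h\in\Aut(B')$.

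Once these objects are in place, most verifications are routine and I would dispatch them first. Finiteness of $B$ is immediate. The map $\pi$ is well defined because the generators of $\sim$ preserve $h(x)$ (using $\psi(p)(y)=p(y)$), it is a homomorphism because $R^A$ implies $R^{B'}$ and $h\in\Aut(B')$, and it is $\phi(P)$-equivariant since left translation by $\psi(p)$ pushes forward to $\psi(p)$ on $B'$. Crucially, $A$ embeds into $B$: as $\pi([(x,e)])=x$, the assignment $x\mapsto[(x,e)]$ is injective, and since $\pi$ is a homomorphism and $A\subseteq B'$ reflects relations, $R^B$ restricted to the copy of $A$ coincides with $R^A$. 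Each $\phi(p)$ is a bijection of $B$ (left translation by the invertible $\psi(p)$, which commutes with the skew action and so descends to the quotient), preserves $R^B$ by construction, and extends $p$ because $(x,\psi(p))\sim(p(x),e)$ for $x\in\dom(p)$. Coherence of $\phi$ reduces to an identity of total permutations: for a coherent triple $(p_1,p_2,q)$ one only needs $\phi(q)=\phi(p_1)\circ\phi(p_2)$, and since $\phi(p)$ is left translation by $\psi(p)$, this follows from $\psi(q)=\psi(p_1)\circ\psi(p_2)$, which is exactly the coherence of $\psi$. Conditions (i) and (ii) of specialness are read off directly from the definitions of $H$ and $R^B$, writing any $h\in H$ as $\psi(w)$.

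The substantive step, and the one I expect to be the main obstacle, is condition (iii): if $\phi(w)(x_1)=x_2$ with $x_1,x_2\in A$, I must produce a word $v\in W(P)$ with $\phi(v)=\phi(w)$ and $x_1\in\dom(v)$. In the model this equation reads $[(x_1,\psi(w))]=[(x_2,e)]$, i.e.\ $(x_1,\psi(w))$ and $(x_2,e)$ lie in a single $\mathcal{G}$-orbit of the skew product. The heart of the argument is to read this orbit-chain off as a word: each elementary move changes the first coordinate by a single $p^{\pm1}$ while keeping it inside $A$, so concatenating the moves yields $v$ with all intermediate images in $A$, whence $x_1\in\dom(v)$ and $v(x_1)=x_2$; simultaneously tracking the $H$-coordinate along the chain forces $\psi(v)=\psi(w)$, and therefore $\phi(v)=\phi(w)$. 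This dictionary between orbit-chains in the skew product and partial-isomorphism words of prescribed cocycle value is precisely the computation of the Mackey range, and it is where the skew-product-plus-orbit-quotient design earns its keep; the same bookkeeping, specialized to $x_1=x_2$ and $w$ the empty word, re-proves injectivity of $A\hookrightarrow B$ independently of $\pi$.
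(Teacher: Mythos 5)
Your proof is correct and follows essentially the same route as the paper's: both build $B$ as a Mackey-range skew product --- a quotient of $A\times(\text{finite group})$ by the word/orbit equivalence --- define $\phi(p)$ by translation in the group coordinate, impose relations only on tuples sharing a common group coordinate over an $R^A$-tuple, and take the evaluation map $[(x,h)]\mapsto h(x)$ as the equivariant homomorphism to $B'$, your right-multiplication convention for $\sim$ being just the mirror image of the paper's left-multiplication one, and your chain-tracking argument for specialness condition (iii) being the same bookkeeping the paper performs with words. The one point of divergence --- taking the subgroup $H=\langle\psi(p):p\in P\rangle$ rather than all of $\Aut(B')$ as the paper does --- is if anything the cleaner choice: only classes $[(x,h)]$ with $h\in H$ are reachable from $A$ by maps of the form $\phi(w)$, so specialness condition (i) holds literally for your $B$, whereas with $G=\Aut(B')$ one must tacitly restrict to the part of $(A\times G)/\sim$ meeting $A\times H$.
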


\begin{proof} Set $G= {\rm Aut}(B')$. This is a finite group. Let $1 = {\rm id}_{B'}$ be its identity element. 
Define the underlying set of $B$ as follows. Consider
\[
A\times G
\]
with the following relation on it
\[
(x,g)\sim (y,h) \Leftrightarrow \exists w\in W(P)\; (x\in {\rm
dom}(w)\hbox{ and } (w(x), \psi(w)\circ g) = (y,h)).
\]
One checks that $\sim$ is an equivalence relation and defines
\[
B= (A\times G)/\sim.
\]

For $p\in P$ and $[x,g]\in B$, let
\[
\phi(p)([x,g]) = [x, g\circ (\psi(p))^{-1}].
\]
It is easy to check that the operation above is well defined. Note
also that if $\psi$ is coherent, then so is $\phi$ since, for $p_1, p_2, q\in P$ with 
\[
{\rm dom}(p_2) = {\rm dom}(q),\, \range(p_1) = \range(q),\, \range(p_2)= {\rm dom}(p_1)
\]
the condition 
\[
\psi(q) = \psi(p_1)\circ \psi(p_2)
\]
immediately translates to
\[
\phi(q) = \phi(p_1)\circ \phi(p_2).
\]

Define now $\iota\colon A\to B$ by letting
\[
\iota(x) = [x,1].
\]
Note that $\iota$ is injective, since $[x,1]=[y,1]$ implies that
for some $w\in W(P)$ we have $x\in {\rm dom}(w)$, $y = w(x)$ and
$\psi(w)=1$. We get $w(x) = \psi(w)(x)=x$, so $y=x$.

We make $B$ into a structure by declaring that
\[
R^B(\phi(w)([x_1,1]), \dots , \phi(w)([x_r,1]))
\]
for some $w\in W(P)$ and $x_1, \dots , x_r\in A$ with $R^A(x_1,
\dots , x_r)$.

We observe that $\iota$ is an embedding from $A$ to $B$. To verify this observation, it suffices
to check that if $R^B([x_1, 1], \dots , [x_r,1])$ for some $x_1,
\dots , x_r\in A$, then $R^A(x_1, \dots , x_r)$. Assuming
$R^B([x_1, 1], \dots , [x_r,1])$, we can find $w\in W(P)$ and
$y_1, \dots , y_r\in A$ with $R^A(y_1, \dots , y_r)$ and with
\begin{equation}\notag
\begin{split}
[x_1,1] = \phi(w)([y_1,1])=\, &[y_1, \psi(w)^{-1}], \dots ,\\
&[x_r,1]= \phi(w)([y_r,1]) = [y_r, \psi(w)^{-1}].
\end{split}
\end{equation}
From this sequence of equalities we can find $w_i\in W(P)$ and
$x_i\in {\rm dom}(w_i)$, for $1\leq i\leq r$, so that
$w_i(x_i)=y_i$ and $\psi(w_i) = \psi(w)^{-1}$. It follows that
$\psi(w)^{-1}(x_i) = y_i$ so
\[
x_i=\psi(w)(y_i).
\]
Since $\psi(w)$ is an automorphism of $B'$ and, by
assumption, we have $R^A(y_1, \dots , y_r)$, we get $R^A(x_1, \dots
, x_r)$ as required.

Note that $\phi(p)$, for $p\in P$, is an extension of $p$ (if $A$
is viewed as a substructure of $B$ via $\iota$). Indeed, for $x\in
{\rm dom}(p)$, we get
\[
\phi(p)([x,1]) = [x, \psi(p)^{-1}] = [p(x), \psi(p)\circ
\psi(p)^{-1}] = [p(x), 1].
\]

We check now that $B$ is special over $A$ and $\phi$. It is clear
that the first two conditions in the definition of special
structure are fulfilled. To see the third condition, let $x_1,
x_2\in A$ and let $w\in W(P)$ be such that
\[
\phi(w)([x_1, 1]) = [x_2, 1].
\]
We need to find $v\in W(P)$ such that $x_1\in {\rm dom}(v)$,
$v(x_1)=x_2$, and $\phi(w) = \phi(v)$. Since
\[
[x_1, \psi(w)^{-1}] = [x_2, 1],
\]
there is $v\in W(P)$ such that $x_1\in {\rm dom}(v)$,
$v(x_1)=x_2$, and $\psi(v)\circ \psi(w)^{-1} = 1$, so $v$ is as
required.

To define a homomorphism from $B$ to $B'$ consider the
function $A\times G\to B'$ given by
\[
(x,g)\mapsto g^{-1}(x).
\]
Note that if $x\in {\rm dom}(w)$, then on the element $(w(x),
\psi(w)g)$ of the $\sim$-equivalence class of $(x,g)$, the
function above can be evaluated by
\[
(w(x), \psi(w)\circ g)\mapsto (\psi(w)\circ g)^{-1}(w(x)) =
g^{-1}(\psi(w)^{-1}(w(x))) = g^{-1}(x).
\]
It follows that this function induces a function $f$ from
$(A\times G)/\sim$ to $B'$, that is, we have
\[
f\colon B\to B'.
\]
To check that $f$ is a homomorphism assume that
\[
R^B(\phi(w)([x_1,1]), \dots , \phi(w)([x_r,1]))
\]
for some $w\in W(P)$ and some $x_1, \dots , x_r\in A$ with
$R^A(x_1, \dots , x_r)$. Note also that
\[
f(\phi(w)([x_i,1])) = f([x_i, \psi(w)^{-1}]) = \psi(w)(x_i).
\]
Since $R^A(x_1, \dots , x_r)$ and $\psi(w)$ is an automorphism of
$B'$, we get
\[
R^{B'}(\psi(w)(x_1), \dots , \psi(w)(x_r)),
\]
hence
\[
R^{B'}(f(\phi(w)([x_1,1])), \dots , f(\phi(w)([x_r,1]))),
\]
as required.
\end{proof}

\subsection{Extending isomorphisms without $\mathcal F$-freeness}\label{S:not}

\begin{proposition}\label{P:nofr}
Let $A$ be a finite structure. There exists a finite structure
$B\supseteq A$ and a coherent function $\phi: \Part(A) \to {\rm Aut}(B)$ with $p \subseteq \phi(p)$ for each $p\in \Part(A)$.
\end{proposition}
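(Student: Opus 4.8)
The plan is to separate two difficulties: making the choice of extensions \emph{coherent}, which I will handle with Lemma~\ref{L:simple}, and turning permutations into genuine automorphisms of a larger structure, which I will handle by a group quotient of the kind appearing in Lemma~\ref{L:spec}. The starting observation is that any $p\in\Part(A)$ extends to some $\sigma_p\in\Sym(A)$, since $|\dom(p)|=|\range(p)|$ forces $|A\setminus\dom(p)|=|A\setminus\range(p)|$; however $\sigma_p$ need not lie in $\Aut(A)$, so the honest automorphism extending $p$ must be sought in a structure properly larger than $A$.

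First I would apply Lemma~\ref{L:simple} with $X=A$. Encode each $p\in\Part(A)$ as the partial map $\hat p$ on $\mathcal{P}(A)$ with $\dom(\hat p)=\{\{a\}:a\in\dom(p)\}$ and $\hat p(\{a\})=\{p(a)\}$; this $\hat p$ is induced by $\sigma_p$. The assignment $p\mapsto\hat p$ is coherent, since the domains and ranges of the $\hat p$ are determined by $\dom(p)$ and $\range(p)$ and the singleton maps compose as the $p$ do. Composing with the coherent map produced by Lemma~\ref{L:simple} yields a coherent $\rho\colon\Part(A)\to\Sym(A)$ with $\rho(p)\supseteq p$ for all $p$. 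Let $G=\langle\rho(\Part(A))\rangle\le\Sym(A)$ and extend $\rho$ to words $w\in W(P)$ by $\rho(w)=\rho(p_1)^{e_1}\cdots\rho(p_n)^{e_n}$.

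Next I would build $B$ as the quotient $(A\times G)/\!\sim$, where $(x,g)\sim(w(x),\rho(w)g)$ for $x\in\dom(w)$ and $w\in W(P)$, embed $A$ by $\iota(x)=[x,1]$, set $\phi(p)([x,g])=[x,g\,\rho(p)^{-1}]$, and declare $R^B$ to hold exactly on the tuples $(\phi(w)(\iota(x_1)),\dots,\phi(w)(\iota(x_r)))$ with $w\in W(P)$ and $R^A(x_1,\dots,x_r)$. Granting that this is well defined, the desired properties of $\phi$ come out cheaply: $\phi(p)$ extends $p$ because the word $w=p$ witnesses $[x,\rho(p)^{-1}]=[p(x),1]$ for $x\in\dom(p)$; and $\phi$ is coherent because $\phi=\Phi\circ\rho$, where $\Phi\colon G\to\Aut(B)$, $\Phi(\gamma)([x,g])=[x,g\gamma^{-1}]$, is a group homomorphism (hence coherent) and $\rho$ is already coherent. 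That each $\Phi(\gamma)$ lies in $\Aut(B)$ is immediate, as $R^B$ is by construction invariant under $\Phi(G)=\{\phi(w):w\in W(P)\}$.

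The step I expect to be the main obstacle is showing that $\iota$ is an \emph{embedding}, i.e. that $R^B(\iota(\bar u))$ forces $R^A(\bar u)$, so that no spurious relations are created on the copy of $A$. Unwinding the definition, $R^B(\iota(\bar u))$ yields a word $w$ and a tuple $\bar x$ with $R^A(\bar x)$ and $u_i=\rho(w)(x_i)$ for all $i$; the difficulty is that $\rho(w)$ is only a permutation of $A$ and need not preserve $R$, and the per-coordinate identifications are witnessed by a priori different words. What must be extracted is a single genuine \emph{word of partial isomorphisms} carrying $\bar x$ to $\bar u$, whose relation-preservation then gives $R^A(\bar u)$. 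This is exactly the \emph{speciality} phenomenon, and the clean way to secure it is to produce an equivariant homomorphism from $B$ onto an auxiliary structure on which $G$ acts by automorphisms and into which $A$ embeds, precisely as in the proof of Lemma~\ref{L:spec}; composing the embedding of $A$ into that auxiliary structure with this homomorphism reflects relations and forces $\iota$ to be an embedding. Verifying that everything in the quotient is well defined and that this relation-reflecting homomorphism exists is where essentially all the work lies, and it is the one place where the relation-preserving nature of words of partial isomorphisms, as opposed to arbitrary permutations, is indispensable.
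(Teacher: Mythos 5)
There is a genuine gap, and it sits exactly at the step you flag as "the main obstacle": your proposed resolution of it is circular. Your construction $B=(A\times G)/\!\sim$ with $G=\langle\rho(\Part(A))\rangle\le\Sym(A)$ is well defined, $\phi(p)$ does extend $p$, and $\phi=\Phi\circ\rho$ is coherent; none of that is the issue. The issue is that $\iota$ need not reflect relations, because the $\rho(p)$ are mere permutations of the set $A$ and preserve no structure. To repair this you propose to "produce an equivariant homomorphism from $B$ onto an auxiliary structure on which $G$ acts by automorphisms and into which $A$ embeds, precisely as in the proof of Lemma~\ref{L:spec}." But Lemma~\ref{L:spec} does not \emph{construct} such a structure; it \emph{assumes} one as a hypothesis (the finite extension $B'$ on which each $p$ already extends to an automorphism $\psi(p)$), and only upgrades it to a special extension. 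A finite structure containing $A$ on which the group generated by extensions of the partial automorphisms acts by automorphisms compatibly with the inclusion of $A$ is precisely a (coherent) EPPA-extension of $A$ --- the very object Proposition~\ref{P:nofr} asserts to exist. So your argument reduces the proposition to itself; the Mackey-type quotient can transport the automorphism-extension property from one structure to another, but it cannot create it from bare permutations of $A$.

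The paper's proof supplies the missing ingredient from a different source: the combinatorial construction of Herwig--Lascar (\cite[Lemmas 4.8 and 4.9]{herwiglascar}), which produces a finite set $X$, an integer $r$, and a structure $B$ on ${\mathcal P}(X)^r$ containing $A$ such that \emph{every} $\sigma\in\Sym(X)$ induces an automorphism of $B$, and every $p\in\Part(A)$ extends to an automorphism induced by some $\sigma_p\in\Sym(X)$. That universal property is what makes the coherence trick safe: Lemma~\ref{L:simple} is then applied not with $X=A$ but to the induced partial maps $\widetilde p$ on ${\mathcal P}(X)$, and whatever coherent permutations of $X$ it outputs automatically induce automorphisms of $B$. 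In your setup, by contrast, replacing $\sigma_p$ by the output of Lemma~\ref{L:simple} costs nothing but also buys nothing, since no permutation of $A$ is an automorphism of anything relevant. Lemma~\ref{L:spec} is used in the paper only later (in Section~\ref{S:prm}), always fed by a previously constructed genuine extension-by-automorphisms; to prove Proposition~\ref{P:nofr} itself you must import the Herwig--Lascar construction (or reprove an equivalent EPPA statement), and no rearrangement of Lemmas~\ref{L:simple} and \ref{L:spec} alone can substitute for it.
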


Proposition~\ref{P:nofr} states that the class of all finite $\mathcal{L}$-structures has coherent EPPA. It suffices to show the proposition above for 
$\mathcal L$ containing
only one relation symbol. Indeed, we just apply  the argument in \cite[Lemma 4.12]{herwiglascar}
extended by the observation that if $\phi_1$ and $\phi_2$ are coherent as functions from a
set of partial functions $P$ to ${\rm Sym}(X)$ and ${\rm Sym}(Y)$,
respectively, then so is the function $\phi\colon P\to {\rm
Sym}(X\times Y)$ given by
\[
\phi(p)(x,y) = (\phi_1(p)(x), \phi_2(p)(y)).
\]
Assume from this point on that $\mathcal L$ is a language with one
relation symbol. Moreover, assume that the arity of the only
symbol in $\mathcal L$ is $>1$. (The case of arity equal to $1$ is easy to
handle.) In this special case, we will get the proposition above from
\cite[Lemmas 4.8 and 4.9]{herwiglascar} that can be combined with each other
and with the sentence following \cite[Definition 4.6]{herwiglascar} to give
the following statement:

\noindent {\em Let $A$ be a finite structure. There is a finite
set $X$, a natural number $r$ and an $\mathcal L$-structure $B$
with the underlying set ${\mathcal P}(X)^r$ such that $A$ is a
substructure of $B$, for each $\sigma\in {\rm Sym}(X)$ the
bijection of ${\mathcal P}(X)^r$ induced by $\sigma$ as follows
\[
(a_1, \dots , a_r) \mapsto (\sigma[a_1], \dots , \sigma[a_r])
\]
is an automorphism of $B$, and each partial automorphism of $A$
extends to an automorphism of $B$ induced by some $\sigma\in {\rm
Sym}(X)$.}

So assume the statement above. For $p\in \Part(A)$,
let $D_p=\dom(p)$ and $R_p=\range(p)$. Let also
$\sigma_p\in {\rm Sym}(X)$ be such that the automorphism of $B$
induced by it extends $p$. For a set $E\subseteq B$, let
\[
\widetilde{E} = \{ a\in {\mathcal P}(X)\mid \exists (a_1, \dots , a_r)\in E\; \exists i\leq r\; a=a_i\}.
\]
Define a function $\widetilde{p}$ from $\widetilde{D_p}$ to
${\mathcal P}(X)$ by letting
\[
\widetilde{p}(a) = \sigma_p[a].
\]
A quick check shows that the range of $\widetilde{p}$ is
$\widetilde{R_p}$.

We claim that the function $p\mapsto \widetilde{p}$ where $p\in P$ is coherent. By the above computation of the range of 
$\widetilde{p}$, it suffices to show that if $p_1, p_2, q$ are partial isomorphisms of $A$ such that
\begin{equation}\label{E:ddd}
D_{p_2} = D_q,\, R_{p_1} = R_q,\, D_{p_1} = R_{p_2}, \hbox{ and }q = p_1\circ p_2,
\end{equation}
then
\[
\widetilde{D_{p_2}} = \widetilde{D_{q}},\, \widetilde{R_{p_1}} =
\widetilde{R_{q}},\, \widetilde{D_{p_1}} =
\widetilde{R_{p_2}}, \hbox{ and } \widetilde{q} = \widetilde{p_1}
\circ \widetilde{p_2}.
\]
The first three equalities follow immediately from the first three equalities of \eqref{E:ddd}. 
It remains to see the fourth one. Let $a\in \widetilde{D_{p_2}}$. So
for some $(a_1, \dots, a_r)\in D_{p_2}$, we have $a=a_i$ for some
$i\leq r$. For ease of notation assume $i=1$, so the tuple is $(a,
a_2, \dots , a_r)$. Then
\begin{equation}\notag
\begin{split}
p_2(a, a_2, \dots , a_r&) = (\sigma_{p_2}[a], \sigma_{p_2}[a_2], \dots , \sigma_{p_2}[a_r])\\ 
&\hbox{ and }\\
p_1(\sigma_{p_2}[a], \sigma_{p_2}[a_2], \dots , \sigma_{p_2}[a_r]&) = (\sigma_{p_1}[\sigma_{p_2}[a]],
\sigma_{p_1}[\sigma_{p_2}[a_1]], \dots ,
\sigma_{p_1}[\sigma_{p_2}[a_r]]).
\end{split}
\end{equation}
Therefore, we have 
\[
\begin{split}
(\sigma_q[a], \sigma_q[a_1], \dots , \sigma_q[a_r]) &= q(a, a_1, \dots, a_r)\\ 
&= (\sigma_{p_1}[\sigma_{p_2}[a]], \sigma_{p_1}[\sigma_{p_2}[a_1]], \dots , \sigma_{p_1}[\sigma_{p_2}[a_r]]),
\end{split}
\]
hence 
\[
\widetilde{q}(a) = \sigma_q[a] =\sigma_{p_1}[\sigma_{p_2}[a]] =
\widetilde{p_1}(\widetilde{p_2}(a)).
\]
Thus, $\widetilde{q} =\widetilde{p_1} \circ \widetilde{p_2}$. Now
apply Lemma~\ref{L:simple} to the family
$\{ \widetilde{p}\colon p\in \Part(A)\}$
to get a coherent assignment
$\widetilde{p}\mapsto \psi(\widetilde{p}) \in {\rm Sym}(X)$.
Then the assignment $p\mapsto \psi(\widetilde{p})$ is coherent as required.

\subsection{Extending strong isomorphisms of stretched 
structures preserving $\mathcal F$-freeness}\label{S:str}

We recall the notion of a stretched structure that comes from 
\cite[p. 2005]{herwiglascar}. 
Assume the relational language $\mathcal L$ contains distinguished unary
predicates $U_0, U_1, \dots , U_k$. We say that an $\mathcal{L}$-structure $A$ is
{\em stretched} if $U_0^A, U_1^A, \dots , U_k^A$ partition $A$ and
if $R^A(a_1,\ldots, a_r)$ for $a_i \in A$ and $R\in \mathcal{L}$, then $|\{a_1, \ldots, a_r\} \cap U_i^A| \leq 1$ for $1\leq i\leq k$. Note that this condition does not involve $U_0^A$.

Below in this section when we say a structure we mean a stretched
structure.

\textit{Fix a (of course, stretched) finite structure $A$.}

When we
say that $B$ is an {\em extension} we understand that $B$ is a
structure containing $A$ as a substructure. For $D\subseteq A$, we say that an extension $B$ is {\em based on} $D$ if no $b\in B\setminus A$ has 
links with elements of $A\setminus D$. In other words, $B$ is the free amalgam  (Definition \ref{freeamalgamdef}) of $A$ and $C$ over $C\cap A$ for a structure $C$ such that $C\cap A\subseteq D$. We write
\[
B= A*C
\]
A structure $C$ is called {\em small} if $U_i^{C}$ has at most one element, for each $0\leq i\leq k$
An extension $B$ is called a {\em short extension} if there is a small structure $C$ such that $B= A* C$.  
(Short extensions are defined in \cite[p.2007]{herwiglascar}.)

Let $p$ be a partial isomorphism of $A$ with domain $D\subseteq A$, and let $B$ be an extension based on $D$. We write 
\[
p(B)
\]
for the structure whose domain is $B$ and in which, for each relation symbol $R$ of arity $r$ and $b_1, \dots , b_r\in B$, we have $R^{p(B)}(b_1, \dots, b_r)$ 
precisely when 
\begin{itemize}
\item $b_1, \dots, b_r\in A$ and $R^B(b_1, \dots , b_r)$ or 

\item there exists $i_0$ with $b_{i_0}\not\in A$ and $R^B(c_1, \dots , c_r)$, where $c_i=b_i$, if $b_i\not\in A$ and $c_i= p(b_i)$, if $b_i\in A$ (so $b_i\in D$). 
\end{itemize}

For two extensions $B_1, B_2$, let
\[
B_1\leq B_2
\]
if there is a homomorphism from $B_1$ to $B_2$ equal to the identity function on $A$. 

Let $p$ be a partial isomorphism of $A$
with domain $D\subseteq A$. We say that $p$ is {\em strong} if for
each short extension $B$ based on $D$, $B\leq A$ if and only if
$p(B)\leq A$. (Strong partial isomorphisms are defined in
\cite[Definition 5.3(4)]{herwiglascar}.)

The remainder of this section consists of a proof of the following proposition. 

\begin{proposition}\label{P:strong}
Let $\mathcal F$ be a set of small structures. Assume that $A$ is
a finite $\mathcal F$-free structure. There exists a finite
$\mathcal F$-free extension $B$ of $A$ such that each strong $p\in \Part(A)$ has an extension $\phi(p) \in \Aut(B)$ and $\phi$ is a coherent function from 
the set of all strong partial isomorphisms of $A$ to ${\rm
Aut}(B)$.
\end{proposition}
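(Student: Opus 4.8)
The plan is to read off $B$ together with the coherent map $\phi$ directly from the two lemmas of the preceding subsection, so that clauses (i) and (ii) come essentially for free and the whole burden of the proof falls on clause (iii), the $\mathcal F$-freeness of $B$. Write $P$ for the set of all strong partial isomorphisms of $A$; coherence of $\phi$ is then required only for coherent triples lying in $P^3$. First I would apply Proposition~\ref{P:nofr} to the finite structure $A$, obtaining a finite extension $B'\supseteq A$ and a coherent map $\psi\colon\Part(A)\to\Aut(B')$ with $p\subseteq\psi(p)$ for every $p$. The restriction $\psi\restriction P$ is still coherent, so the pair $(B',\psi\restriction P)$ satisfies the hypotheses of Lemma~\ref{L:spec} for the family $P$.

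Feeding this data into Lemma~\ref{L:spec} yields a finite extension $B\supseteq A$ and a coherent map $\phi\colon P\to\Aut(B)$ with $p\subseteq\phi(p)$ for each $p\in P$, such that $B$ is special over $A$ and $\phi$. Clauses (i) and (ii) of the proposition are then precisely the conclusions of Lemma~\ref{L:spec}, and coherence has thereby been secured automatically. Before turning to $\mathcal F$-freeness one checks that $B$ is again stretched: by clause (i) of specialness every element of $B$ equals $\phi(w)(\iota(x))$ for some word $w\in W(P)$ and some $x\in A$, and since the unary predicates $U_0,\dots,U_k$ are $1$-ary relation symbols they are transported by the automorphisms $\phi(w)$; hence each element of $B$ lies in exactly one $U_i^B$, and the stretchedness bound on $U_1^B,\dots,U_k^B$ is inherited from $A$ through clause (ii) of specialness.

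What remains, clause (iii), is that $B$ be $\mathcal F$-free, and I expect this to be the only genuinely hard step: it is here that the smallness of the members of $\mathcal F$ and the strongness of the elements of $P$ must both be spent. Suppose for contradiction that some $F\in\mathcal F$ admits a homomorphism $h\colon F\to B$. Since $F$ is small it has at most one point in each of $U_0,\dots,U_k$, so $h(F)$ is a configuration on at most $k+1$ points. Using clause (ii) of specialness I would write each relation holding on $h(F)$ as the $\phi(w)$-image of a relation of $A$; the plan is then to reconcile the finitely many words that occur into a common one, so that after applying a single $\phi(w)^{-1}$ the image $\phi(w)^{-1}(h(F))$ lands inside a short extension $B_0=A*C$ based on the domain $D$ of a suitable strong $p\in P$. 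Strongness of $p$ is exactly what lets one pass between the relations $B_0\le A$ and $p(B_0)\le A$, so that the copy of $A$ into which the small piece $C$ actually folds is the one containing $\iota(A)$; the resulting folding $B_0\to A$ composes with $\phi(w)^{-1}\circ h$ to give a homomorphism $F\to A$, contradicting that $A$ is $\mathcal F$-free. Clause (iii) of specialness enters to guarantee that the reconciling word $w$ can be chosen to act genuinely on the relevant points of $A$, so that the reconciled coordinates stay inside $A$. The delicate combinatorial points here — localizing the small image $h(F)$ within a single short extension, and checking that the word-transport commutes with the folding maps witnessing $\le A$ — I would carry out following the argument of \cite{herwiglascar}; these constitute the crux of the proof, whereas coherence, by contrast, required no extra work beyond invoking Proposition~\ref{P:nofr} and Lemma~\ref{L:spec}.
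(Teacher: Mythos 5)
There is a genuine gap, and it sits exactly where you placed ``the whole burden of the proof'': clause (iii). Note first that your structure $B$ is built without any reference to $\mathcal F$ whatsoever --- strongness of a partial isomorphism is defined relative to $A$ and short extensions only, so the output of Proposition~\ref{P:nofr} followed by Lemma~\ref{L:spec} depends only on $A$ and $P$. Hence for clause (iii) this single structure would have to be $\mathcal F$-free simultaneously for \emph{every} family $\mathcal F$ of small structures with $A$ $\mathcal F$-free; equivalently, every small structure admitting a homomorphism into $B$ must already admit one into $A$. Specialness does not give this. Condition (ii) of specialness places each \emph{individual} relation tuple of $B$ inside a translate $\phi(w)(A)$, but a forbidden $F$ consists of several tuples, each possibly arriving via a different word, and overlapping in $B$ in a pattern that need not be realizable in $A$; condition (iii) of specialness only resolves word-coincidences that land inside $A$ and gives no means to ``reconcile the finitely many words into a common one.'' That reconciliation step, together with ``localizing $h(F)$ within a single short extension based on $D_p$,'' is precisely what fails: the points of $h(F)$ are scattered over different translates $\phi(w)(A)$, not over one short extension attached to the domain of one strong $p$. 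The only freeness mechanism Lemma~\ref{L:spec} actually provides is the equivariant homomorphism $B\to B'$ (freeness under homomorphisms pulls back along homomorphisms), and that mechanism is unavailable to you, because your $B'$ comes from Proposition~\ref{P:nofr}, which says nothing about $\mathcal F$-freeness.

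This is also why your deferral to ``the argument of \cite{herwiglascar}'' does not land: you have transplanted the architecture of the proof of Theorem~\ref{T:main} into Proposition~\ref{P:strong}. In the paper, Lemma~\ref{L:spec} is used only in Section~\ref{S:prm}, and crucially it is applied with target $B'$ equal to the output of Proposition~\ref{P:strong}, which is \emph{already} $\mathcal F^+$-free --- there, and only there, does the homomorphism $B\to B'$ transfer freeness to the special extension. Proposition~\ref{P:strong} itself is proved by an entirely different route, which is the combinatorial heart of Herwig--Lascar: take the strong extension of Lemma~\ref{L:fintype}, in which for each strong $p$ the number of points of type $t$ over $D_p$ equals the number of points of type $p(t)$ over $R_p$; use Lemmas~\ref{L:inv} and \ref{L:down} to convert type-respecting bijections of $U_1$ into strong partial isomorphisms; use Lemma~\ref{L:simple} to make those choices coherent; iterate over the predicates $U_1,\dots,U_k$; and only then apply Proposition~\ref{P:nofr} and cut down to the substructure generated by $A$ under the resulting automorphisms. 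In other words, the Herwig--Lascar argument you propose to follow for clause (iii) \emph{is} the type-counting construction you were trying to bypass; without it, the step ``$B$ is $\mathcal F$-free'' in your proposal has no proof.
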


We introduce notions and lemmas needed in the proof of the above theorem.  

\textit{For the remainder of this section, 
we adopt the convention that for a partial isomorphism $p$ of $A$, the domain and the range of $p$ are denoted by $D_p$ and $R_p$, respectively.}  

We say that an extension $B$ is a {\em strong extension} if for
each short extension $C$, $C\leq B$ implies $C\leq A$. (Strong
extensions are defined in \cite[Definition 5.3(3)]{herwiglascar}.)

A {\em pointed structure} $B$ is a structure with a distinguished point that is an element of $U^B_1$. 
A {\em pointed short extension} $B$ is a
short extension that is a pointed structure with the distinguished point that is not an element of $A$. 
Given two pointed extensions $B_1, B_2$ of $A$, we let
\[
B_1\leq^* B_2
\]
if there is a homomorphism $B_1\to B_2$ that is identity on
$A$ and maps the distinguished point of $B_1$ to the distinguished point of $B_2$.

A \textit{type} is a pair $t= (\Gamma, {\mathcal E})$ for which there is a
strong extension $B$ of $A$, $b\in U_1^B$,  and a subset $D$ of $A$ such that
\begin{itemize}
\item $\Gamma$ is a pointed structure whose domain is  $A\cup \{ *\}$, where $*$
is the distinguished point of $\Gamma$ not belonging to $A$, and, for $b_1', \dots, b_r'\in \Gamma$, $R^\Gamma(b_1', \dots , b_r')$
precisely when the following two conditions hold: \begin{enumerate}
    \item $R^B(b_1, \dots , b_r)$, where $b_i=b_i'$ if $b_i'\not=*$, and $b_i=b$ if $b_i'=*$,
    
    \item if $b_{i_0}'= *$ for some $i_0$, then $b_i\in D$ for all $i$ with $b_i'\not= *$. 
\end{enumerate}

\item $\mathcal E$ is the family of all pointed short extensions that
are maximal with respect to $\leq^*$ among all pointed short
extensions $C$ based on $D$ with $C\leq^* B$; we assume that
$\mathcal E$ does not contain two distinct isomorphic
structures.
\end{itemize}

We write $t_B(b/D)$ for the pair $(\Gamma, {\mathcal E})$ as above. 
(The above definition is a part of \cite[Definition 5.7]{herwiglascar}. Types are
defined in \cite[Definition 5.17]{herwiglascar}.)

To see that the above definition coincides with the one from \cite{herwiglascar}, one applies \cite[Lemma 5.18]{herwiglascar} and then argues as follows. 
Define a pointed short extension $C$ to be {\em irreducible} if the Gaifman graph of $C$ is connected on the set $C\setminus
A$. It is easy to see that if $B$ is a pointed extension and $C_0$ is a pointed short extension that is maximal with respect to
$\leq^*$ among all pointed short extensions $C$ with $C\leq^*B$, then $C_0$ is irreducible. 

We say that a type $t = (\Gamma, {\mathcal E})$ is {\em based on}
$D\subseteq A$ if each point of $A$ having a link with $*$ in
$\Gamma$ belongs to $D$ and for each $C\in {\mathcal E}$ is based
on $D$.

Given a partial isomorphism $p$ of $A$ whose domain is $D$ and a type $t= (\Gamma, {\mathcal E})$ based on $D_p$, let
\[
p(t) = (p(\Gamma), \{ p(C)\mid C\in {\mathcal E}\}).
\]

We note the following lemma, which implies that $p(t)$ is a type if $B$ is a strong extension and $p$ is a strong partial isomorphism. 

\begin{lemma}\label{L:inv} 
Let $B$ be a strong extension and let $p$ be a strong partial isomorphism of $A$. 
\begin{enumerate}
\item[(i)] If $t$ is a type based on $D$, then $p(t)$ is a type
and it is based on $R_p$.

\item[(ii)] If $b\in D_p$, then 
\[
t_B(p(b)/R_p) = p(t_B(b/D_p)).
\]
\end{enumerate}
\end{lemma}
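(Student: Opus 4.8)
The plan is to treat the operation $X\mapsto p(X)$ as a \emph{transport} of extensions along the partial isomorphism $p$, turning extensions based on $D_p$ into extensions based on $R_p$, and to reduce both assertions to the naturality of this operation together with the two strongness hypotheses, which supply the only non-formal input.

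First I would record the elementary behaviour of transport. Since $p$ is an isomorphism $D_p\to R_p$, the structure $p(X)$ acts as $p$ on the links between $X\setminus A$ and $A$ and fixes $X\setminus A$ pointwise, while fixing the whole reduct on $A$ (so $p(A)=A$). From the definitions one checks that transport carries extensions based on $D_p$ to extensions based on $R_p$ and is inverted by $p^{-1}(\cdot)$, so it is a bijection between these two classes; and that it preserves the properties of being short, small, pointed, and irreducible, as well as isomorphism type and every embedding that fixes $A$. The one point that is \emph{not} purely formal already appears here: a homomorphism that sends a new point into $A$ need not remain a homomorphism after transport, since transport twists the $A$-links by $p$ but leaves untouched the relations internal to $A$. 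Controlling exactly these collapsing homomorphisms is what the strongness hypotheses are for, and it is where I expect the real work to lie.

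I would prove (ii) first, as it is the more concrete statement. Fix $b\in D_p$ lying in $U_1$ and compare $t_B(b/D_p)=(\Gamma,\mathcal E)$ with $t_B(p(b)/R_p)=(\Gamma',\mathcal E')$. For the first coordinates, $\Gamma$ records precisely the relations of $A$ holding among $b$ and tuples drawn from $D_p$, while $\Gamma'$ records those among $p(b)$ and tuples from $R_p$; since $p$ is a partial isomorphism carrying $D_p$ onto $R_p$, these diagrams correspond under transport, giving $\Gamma'=p(\Gamma)$ by a direct calculation. For the second coordinates I would use that $C\mapsto p(C)$ is a maximality-preserving bijection from pointed short extensions based on $D_p$ to those based on $R_p$; it then remains to match the conditions ``$C\leq^*B$ with distinguished point sent to $b$'' and ``$p(C)\leq^*B$ with distinguished point sent to $p(b)$.'' Because $B$ is not itself of the form $p(\cdot)$, transporting a witnessing homomorphism $C\to B$ to one $p(C)\to B$ is exactly a collapsing-homomorphism situation, and here I would invoke the defining biconditional of a strong partial isomorphism ($E\leq A\Leftrightarrow p(E)\leq A$ for short $E$ based on $D_p$) together with the strongness of $B$ to push the comparison down to $A$, where $p$ is a genuine isomorphism and the matching is immediate.

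Part (i) then follows the same template. Writing $t=t_{B_0}(b/D)$ with $B_0$ a strong extension and $b\in U_1^{B_0}$, I would offer the triple $\bigl(p(B_0),\,b,\,R_p\bigr)$ as a witness for $p(t)$: the point $b$ still lies in $U_1^{p(B_0)}$ since transport does not alter unary relations, and the equality $t_{p(B_0)}(b/R_p)=p(t)$ is obtained by the same $\Gamma$-computation and $\mathcal E$-correspondence as in (ii). The principal obstacle is to verify that $p(B_0)$ is again a \emph{strong} extension, for only then is it admissible as a witnessing structure for a type. I would argue this by taking a short extension $C\leq p(B_0)$, reducing via irreducibility and the fact that $p(B_0)$ is based on $R_p$ to the case where $C$ is based on $R_p$, transporting through $p^{-1}$ to get $p^{-1}(C)\leq B_0$, invoking strongness of $B_0$ to obtain $p^{-1}(C)\leq A$, and transporting back to conclude $C\leq A$. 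Executing this reduction cleanly, and using the strong-partial-isomorphism biconditional to absorb the short extensions that are not literally based on $R_p$, is the heart of the proof and the step I expect to be the main difficulty.
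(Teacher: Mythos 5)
The paper itself does not prove this lemma: it cites Herwig--Lascar's Lemma 5.21 for part (i) and declares part (ii) immediate from the definitions and the strongness of $p$, so there is no in-paper argument to compare yours against. Your central mechanism is nevertheless the right one --- transport along $p$ is formal except where $A$-fixing homomorphisms collapse new points into $A$, and the two strongness hypotheses exist precisely to control those collapses. In particular, your outline of (ii) can be completed: since in (ii) the distinguished point $b$ lies in $D_p\subseteq A$, one may identify the distinguished point of a candidate $C$ with $b$ to obtain a short extension $\widetilde{C}$ based on $D_p$, and then $C\leq^*(B,b)$ iff $\widetilde{C}\leq B$ iff $\widetilde{C}\leq A$ (strongness of $B$) iff $p(\widetilde{C})\leq A$ (strongness of $p$) iff $p(C)\leq^*(B,p(b))$ --- this is exactly your ``push the comparison down to $A$.''

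Part (i) as you describe it, however, has a genuine gap. The operation $X\mapsto p(X)$ is defined only for extensions \emph{based on} $D_p$, but a type being based on $D_p$ constrains only $\Gamma$ and the members of $\mathcal{E}$, not the witnessing strong extension $B_0$, whose new points may have links to points of $A\setminus D_p$. The types to which the lemma is actually applied (those of the form $t_{B'}(b/D_p)$ in Lemma \ref{L:fintype}, and the ones produced in part (ii) itself) have exactly such witnesses, so $p(B_0)$ --- and with it your assertion that ``$p(B_0)$ is based on $R_p$'' --- is simply undefined, and the witness-transport strategy fails at its first step. (Pruning the offending links from $B_0$ does preserve strongness, but it can change the $\mathcal{E}$-part of the type, because a homomorphism $C\to B_0$ may send non-distinguished new points of $C$ into $A\setminus D_p$; so the repair is not routine.) Two further steps are asserted as formal but are not: the claim that the (ii)-argument transfers to (i) overlooks that in (i) the distinguished point is a \emph{new} point of $B_0$, so the identification trick that drives the push-down to $A$ is unavailable there; and your ``maximality-preserving bijection'' is itself a collapsing-homomorphism situation, since $\leq^*$ between two pointed short extensions allows non-distinguished new points to land in $A$, where transport twists relations by $p$ on one side only. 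These are exactly the places where Herwig--Lascar's Lemma 5.21 does real work, which is presumably why the paper cites it rather than reproving it.
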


Point (i) of the lemma above is \cite[Lemma 5.21]{herwiglascar}. 
Point (ii) is obvious from the definitions and strongness of $p$ and is
mentioned in the sentence preceding \cite[Lemma 5.2]{herwiglascar}.

The next lemma is, in a way, a converse of Lemma~\ref{L:inv}(ii). 

\begin{lemma}\label{L:down}
Let $B$ be a strong extension, and let $p$ be a strong partial
isomorphism of $A$. Let $q$ be a
partial function from $B$ to $B$ with domain $D_p\subseteq E\subseteq D_p\cup
U^B_1$. Assume that $q$ is an injection, that it extends $p$ and
that for each $b\in U^B_1\setminus D_p$
\[
t_B(q(b)/R_p) = p(t_B(b/D_p)).
\]
Then $q$ is a strong partial isomorphism of $B$.
\end{lemma}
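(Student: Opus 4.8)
The plan is to check, in turn, the two requirements in the definition of a strong partial isomorphism of $B$: that $q\in\Part(B)$, and that it is strong. For the first, the decisive observation is that $B$ is stretched and $E\setminus D_p\subseteq U_1^B$, so any tuple from $E$ on which some $R^B$ holds contains at most one coordinate in $U_1^B$, hence at most one coordinate in $E\setminus D_p$, all its other coordinates lying in $D_p$. For a tuple lying entirely in $D_p$, preservation of $R^B$ in both directions is immediate since $q$ extends $p\in\Part(A)$ and $A$ is a substructure of $B$. For a tuple with a single coordinate $b\in E\setminus D_p$ and remaining coordinates $\bar a\in D_p$, the relation $R^B(b,\bar a)$ is exactly what the $\Gamma$-component of $t_B(b/D_p)$ records, while $R^B(q(b),q(\bar a))=R^B(q(b),p(\bar a))$ is recorded by the $\Gamma$-component of $t_B(q(b)/R_p)$; the hypothesis $t_B(q(b)/R_p)=p(t_B(b/D_p))$ together with the definition of $p(\Gamma)$ makes the two equivalent. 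With the assumed injectivity this shows $q$ is an isomorphism onto its image.

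For strongness I must show that for every short extension $C$ of $B$ based on $E$ one has $C\leq B$ if and only if $q(C)\leq B$. I would first reduce to a single implication by symmetry: applying $p^{-1}$ to the type equation turns $p(t_B(b/D_p))=t_B(q(b)/R_p)$ into $p^{-1}(t_B(q(b)/R_p))=t_B(b/D_p)$, so $q^{-1}$ satisfies the hypotheses of the lemma with $(p,D_p,R_p)$ replaced by $(p^{-1},R_p,D_p)$. Granting the forward implication for $q^{-1}$ and applying it to $q(C)$ yields $q(C)\leq B\Rightarrow q^{-1}(q(C))=C\leq B$, the reverse implication for $q$. Hence it suffices to prove $C\leq B\Rightarrow q(C)\leq B$.

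For this forward implication I would decompose the new points $C\setminus B$ into the connected components of ${\rm Gaif}(C)\restriction(C\setminus B)$, reducing to the case of an irreducible pointed short extension anchored at its point $*\in U_1^C$, so that $C\leq B$ amounts to the existence of a witness $\beta\in U_1^B$ realising over $E$ all the links of $*$. The links of $*$ split into those landing in $D_p$, controlled by strongness of $p$ through the equivalence $C\leq A\Leftrightarrow p(C)\leq A$ transported inside the strong extension $B$, and those landing in $E\setminus D_p\subseteq U_1^B$, controlled by the $\mathcal E$-component of $t_B(q(b)/R_p)=p(t_B(b/D_p))$ together with Lemma~\ref{L:inv}, which matches the maximal pointed short extensions foldable at $b$ with those foldable at $q(b)$. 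From these two ingredients I would produce, for each component, a witness $\beta'$ folding the corresponding piece of $q(C)$ into $B$, and reassemble them to obtain $q(C)\leq B$.

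The hard part will be this last step: for a single new point whose links meet both $D_p$ and $E\setminus D_p$, I must combine the folding supplied by strongness of $p$ with the folding supplied by the type hypothesis and verify that they are compatible enough to yield one common witness $\beta'$. This is precisely where the maximality built into the family $\mathcal E$ and the invariance recorded in Lemma~\ref{L:inv} are needed; checking this compatibility, rather than the bookkeeping of the decomposition, is the crux of the argument.
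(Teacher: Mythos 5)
The paper itself does not prove this lemma: it is quoted as \cite[Lemma 5.22]{herwiglascar} together with the argument at the top of p.~2015 of that paper, so the only meaningful question is whether your argument would stand on its own. It does not, because it is a plan rather than a proof, and the step you leave open is precisely the content of the cited result. Your first half is fine: since $B$ is stretched and $E\setminus D_p\subseteq U_1^B$, a related tuple inside $E$ has at most one coordinate in $E\setminus D_p$, and the $\Gamma$-component of the hypothesis $t_B(q(b)/R_p)=p(t_B(b/D_p))$ transports exactly the links between such a coordinate and $D_p$; together with injectivity this gives $q\in\Part(B)$. The symmetry reduction to a single implication is also acceptable, granted the routine facts that $p^{-1}$ is strong and that $p^{-1}(p(t))=t$.

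The genuine gap is the implication $C\leq B\Rightarrow q(C)\leq B$ for a short extension $C$ of $B$ based on $E$. You correctly identify the two sources of control --- strongness of $p$ for links into $D_p$, and the $\mathcal E$-components of the types (via Lemma~\ref{L:inv}) for links into $E\setminus D_p$ --- but you never construct the homomorphism $q(C)\to B$ fixing $B$; you only assert that the maximality built into $\mathcal E$ ``is needed'' to reconcile the two witnesses, and you end by saying that checking this compatibility is the crux. That crux \emph{is} the lemma: a new point of $C$ may be linked simultaneously to $D_p$, to points of $U_1^B\setminus D_p$, and to other new points, and nothing you have written shows that the folding supplied by strongness of $p$ and the foldings supplied by the type equality can be chosen with a common image. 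There is also a secondary defect: your decomposition reduces to irreducible pointed short extensions ``anchored at $*\in U_1^C$,'' but an irreducible component of $C\setminus B$ need not contain any point of $U_1$, and the pointed machinery ($\leq^*$, types, $\mathcal E$) is defined only for distinguished points lying in $U_1$, so such components require a separate treatment. As it stands, the proposal would have to import \cite[Lemma 5.22]{herwiglascar} (or reproduce its proof) to be complete.
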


The lemma above is \cite[Lemma 5.22]{herwiglascar} together with the
argument at the top of \cite[p. 2015]{herwiglascar}.

Finally, we have the following fundamental result.  

\begin{lemma}\label{L:fintype} There exists a strong extension $B$ of $A$ such that
for each strong partial isomorphism $p$ of $A$ and each type $t$
based on $D_p$ we have
\[
|\{ b\in U^B_1\mid t_B(b/D_p) = t\}| = |\{ b\in U^B_1\mid
t_B(b/R_p) = p(t)\}|.
\]
\end{lemma}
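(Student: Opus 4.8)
The plan is to use the finiteness of the set of types together with the action of strong partial isomorphisms on it. Since $A$ is finite, there are only finitely many pointed structures $\Gamma$ with domain $A\cup\{*\}$ and, up to isomorphism, only finitely many families $\mathcal E$ of pointed short extensions; hence the collection $\mathcal T$ of all types based on subsets $D\subseteq A$ is finite, and the set $\Sigma$ of strong partial isomorphisms of $A$ is finite as well. By Lemma~\ref{L:inv}(i) each $p\in\Sigma$ carries types based on $D_p$ to types based on $R_p$, and $p^{-1}\in\Sigma$ induces the inverse assignment, so $t\mapsto p(t)$ is a bijection between these two sets of types. Letting the maps $t\mapsto p(t)$, $p\in\Sigma$, generate an equivalence relation $\approx$ on $\mathcal T$, we obtain finitely many finite $\approx$-classes, and $t\approx p(t)$ whenever $t$ is based on $D_p$.

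First I would reduce the required equality of cardinalities to a single uniform statement: it suffices to construct a strong extension $B$ of $A$ together with, for every $p\in\Sigma$, a bijection $\theta_p\colon U_1^B\to U_1^B$ with $t_B(\theta_p(b)/R_p)=p\bigl(t_B(b/D_p)\bigr)$ for all $b\in U_1^B$. Indeed, since $p$ is injective on types based on $D_p$ and $\theta_p$ is a bijection of $U_1^B$, one then has $|\{b:t_B(b/D_p)=t\}|=|\{b:t_B(\theta_p(b)/R_p)=p(t)\}|=|\{b':t_B(b'/R_p)=p(t)\}|$, which is exactly the asserted balance; moreover, by Lemma~\ref{L:down}, the common extension $p\cup\theta_p$ is then a strong partial isomorphism of $B$, the form in which this lemma is used afterwards. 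To build such a $B$, I would realize every type: for each $t\in\mathcal T$ fix a strong extension witnessing $t$ at a point of $U_1$, and take $B$ to be the free amalgam over $A$ of copies of these witnesses, each $t$ used with a multiplicity $m(t)$ chosen constant on $\approx$-classes. A free amalgam of strong extensions over $A$ is again a strong extension (an irreducible short extension mapping into the amalgam must land on one side), so $B$ is strong; and because $\mathcal F$ consists of small structures, no forbidden structure can be embedded across the amalgam, so $\mathcal F$-freeness is preserved. The orbit-constant choice of $m$ is what gives the two sides of the balance equation matching totals.

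The principal obstacle is the bookkeeping concealed in the counts $|\{b:t_B(b/D_p)=t\}|$. A single $b\in U_1^B$ realizes a type over every subset of $A$ at once, so a point introduced to witness one type $s$ based on $D_s$ also contributes to the count over a different $D_p$ through the type $t_B(b/D_p)$, and this type involves not only the links of $b$ to $A$ but also the short-extension component $\mathcal E$ below $b$. The task is therefore to match, via $p$, the overlapping contributions coming from all witnesses, not merely the designated one. I would organize this matching using Lemma~\ref{L:inv}(ii), which transports $t_B(b/D_p)$ to $p(t_B(b/D_p))$ along $p$, together with the orbit-constancy of $m$, which guarantees that the transported multiset of witnesses coincides with the witnesses present on the $R_p$-side; an induction on $|A\setminus D_p|$ controls the contributions of types over smaller bases. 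Alternatively, one can avoid the direct count by realizing the types inside a sufficiently homogeneous extension---obtained by a covering construction in the spirit of Proposition~\ref{P:nofr} and Lemma~\ref{L:spec}---in which every $p\in\Sigma$ extends to an automorphism of $B$; then $\theta_p$ is simply the restriction of that automorphism to $U_1^B$, and the balance is immediate. The delicate point in either route, and the technical heart of the proof, is to secure these type-respecting bijections while simultaneously keeping $B$ a strong $\mathcal F$-free extension.
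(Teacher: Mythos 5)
Your proposal does not close the lemma, and by your own admission: the final sentence concedes that the ``technical heart'' --- securing the type-respecting bijections while keeping $B$ strong --- is left open, and that heart is exactly what the lemma asserts. (For context: the paper itself does not reprove this statement; it is quoted as a black box from the upper half of p.~2017 of \cite{herwiglascar}, since the coherence strengthening that is the paper's contribution enters elsewhere, through Lemma~\ref{L:simple} and Lemma~\ref{L:spec}.) Your setup --- finiteness of the set of types, the bijection $t\mapsto p(t)$ coming from Lemma~\ref{L:inv}, free amalgamation over $A$ of realizations of types --- is a reasonable start, but the counting claim is asserted rather than proved, and the assertion is exactly where the difficulty sits. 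Concretely: a point added to witness a type $s$ based on some $D'$ has a type over \emph{every} base, so it contributes to the count $|\{b : t_B(b/D_p)=t\}|$ for every strong $p$ and every $t$; but the equivalence $\approx$ is generated only by the maps $t\mapsto q(t)$ where $q$ acts on types based on exactly $D_q$. A strong $p$ does not act on types based on other sets (for instance proper supersets of $D_p$), so orbit-constancy of the multiplicity $m$ imposes no relation between the contributions such witnesses make to the count over $D_p$ and the contributions their counterparts make over $R_p$; the same applies to the original points of $U_1^A$. The sentence claiming that ``the orbit-constancy of $m$ \ldots guarantees that the transported multiset of witnesses coincides with the witnesses present on the $R_p$-side'' is precisely the statement requiring proof, and the appeal to ``an induction on $|A\setminus D_p|$'' is never developed into an argument.

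Your fallback route is moreover circular. A strong (hence $\mathcal F$-freeness-preserving) extension of $A$ in which every strong partial isomorphism of $A$ extends to an automorphism is essentially the conclusion of Proposition~\ref{P:strong} --- indeed the crux of the whole Herwig--Lascar theorem --- and Lemma~\ref{L:fintype} is the combinatorial ingredient used to obtain it; one cannot invoke such an extension to prove the lemma. Proposition~\ref{P:nofr} and Lemma~\ref{L:spec} cannot substitute here: the first gives an EPPA extension with no control of strongness, types, or $\mathcal F$-freeness, and the second requires an already-built equivariant target $B'$. A smaller inaccuracy: your parenthetical reason that a free amalgam of strong extensions is strong (``an irreducible short extension mapping into the amalgam must land on one side'') is not right as stated, because the new part of a short extension can map across both factors by passing through points whose images lie in $A$; the conclusion does hold, but one must first identify the points mapping into $A$ with their images and then split the resulting short extension according to which factor the remaining points land in, applying strongness of each factor separately.
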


The proof of this lemma is given in \cite{herwiglascar} upper half of page 2017.

\begin{proof}[Proof of Proposition~\ref{P:strong}]
First, we show the following statement. 

\textit{There exists a strong extension $B'$ of $A$ such that each strong
partial isomorphism $p$ of $A$ admits an extension to a strong
partial isomorphism $\phi'(p)$ with domain $D_p\cup U^B_1$ so that
$\phi'$ is coherent.}

To justify this statement, let $B'$ be as in
Lemma~\ref{L:fintype}. Given a strong partial isomorphism $p$
of $A$, define a partial function $\widetilde{p}$ from ${\mathcal P}(U_1^{B'})$ to ${\mathcal P}(U_1^{B'})$ as follows. The domain of
$\widetilde{p}$ consists of sets of the form 
\[
\{ b\},\; \hbox{ for } b\in D_p\cap U_1^{B'},
\]
and
\[
\{ b\in U_1^{B'}\setminus D_p\mid t(b/D_p) = t\},\; \hbox{ for a type $t$ based on }D_p.
\]
Note that the domain of $\widetilde{p}$ is a partition of $U_1^{B'}$. Define $\widetilde{p}$ on the sets of the first kind by
\[
\widetilde{p}(\{ b\}) = \{ p(b)\}
\]
and on the sets of the second type by
\[
\widetilde{p}(\{ b\in U_1^{B'}\setminus D_p\mid t(b/D_p) = t\}) = \{
b\in U_1^{B'}\setminus D_p\mid t(b/R_p) = p(t)\}.
\]
Observe that $\widetilde{p}$ depends only on $p\restriction (D_p\cap U_1^{B'})$. 
The condition in the conclusion of Lemma~\ref{L:fintype}
together with Lemma~\ref{L:inv} ensure that there exists
$\sigma_p\in {\rm Sym}(U^{B'}_1)$ that induces $\widetilde{p}$ by
\begin{equation}\label{E:psa}
\widetilde{p}(a) = \sigma_p[a]
\end{equation}
for all $a$ in the domain of $\widetilde{p}$. Furthermore, it is clear that each permutation in ${\rm Sym}(U_1^{B'})$ inducing $\widetilde{p}$ as
in \eqref{E:psa} extends $p\restriction U_1^{B'}$. Now Lemma~\ref{L:simple} gives us a coherent extension map 
\[
p\restriction (D_p\cap U_1^{B'}) \to \sigma_p \in {\rm Sym}(U_1^{B'})
\]
with \eqref{E:psa}. 
Let $\phi'(p)$ be the joint extension of $p$ and $\sigma_p$ to $D_p\cup U_1^{B'}$. It is clear that the function $p\mapsto \phi'(p)$ is a coherent extension. 
By Lemma~\ref{L:down}, each $\phi'(p)$ is a strong partial isomorphism of $A$.

From the above statement we get the following consequence. 

\textit{There exists a strong extension $B''$ of $A$ such
that each strong partial isomorphism $p$ of $A$ admits an
extension to a strong partial isomorphism $\phi''(p)$ with domain
$D_p\cup (B''\setminus U^{B''}_0)$ so that $\phi''$ is coherent.}

This consequence is established by producing a sequence of strong extensions
\[
A=B_0\subseteq B_1\subseteq \cdots \subseteq B_k
\]
so that the statement above, applied to $U_{i+1}^{B_i}$ in place of
$U_1^B$, is used to obtain $B_{i+1}$ from $B_i$. We finally let
\[
B''= U_0^{B_0}\cup U_1^{B_1} \cup\cdots \cup U_k^{B_k},
\]
and it is easy to see that this $B''$ is as required.

Now, take the structure $B''$ and the coherent extension $\phi''$
constructed above. Apply
Proposition~\ref{P:nofr} to $B''$ to obtain an extension $B'''$ of $B''$ and
an extension $\phi'''(p)\in {\rm Aut}(B')$ of $\phi''(p)$ for each
strong partial isomorphism $p$ of $A$ so that $\phi'''$ is
coherent. Let $B$ be the structure generated by $A$ using all
$\phi'''(p)$ with $p$ a strong partial isomorphism of $A$. It is
easy to see that $B$ is stretched (with respect to the unary
predicates $U_0, \dots , U_k$). Define
\[
\phi(p) = \phi'''(p)\restriction B.
\]
The structure $B$ and the extension $\phi$ are as required.
\end{proof}

\subsection{Extending isomorphisms with $\mathcal F$-freeness}\label{S:prm}
In this section, we finish the proof of Theorem~\ref{T:main}. 

We have a fixed finite relational language $\mathcal L$. First we
claim that it suffices to prove Theorem~\ref{T:main} under the
assumption that $M$ and all structures in $\mathcal F$ are
irreflexive. (This argument comes from \cite{herwig}.) We call a
structure $N$ {\em irreflexive} if for each relation symbol $R$,
say of arity $r$, if, for some $x_1, \dots , x_r\in N$, we have
$R^N(x_1, \dots , x_r)$, then $x_{i_1}\not= x_{i_2}$ for $i_1\not=
i_2$.

There is a canonical way to change $\mathcal L$ to ${\mathcal L}'$
to make each $\mathcal L$-structure into an irreflexive ${\mathcal
L}'$-structure. Given $R\in {\mathcal L}$ of arity $r$ and a
partition $S$ of $\{ 1, \dots , r\}$ into $s$ pieces, let
${\mathcal L}'$ contain a relation symbol $R_S$ of arity $s$.
Given an $\mathcal L$-structure $N$, interpret $R_S$ in it as
follows: $R_S^N(y_1, \dots , y_s)$ precisely when $R^N(x_1, \dots
, x_r)$, where $x_i = y_j$ for $i$ in the $j$-th element of the
partition $S$. Also each ${\mathcal L}'$-structure can be, in a
canonical way, made into an $\mathcal L$-structure. (These two
processes are inverses of each other only when we go from
$\mathcal L$ to ${\mathcal L}'$ first and then back to $\mathcal
L$.)

Now we are given $\mathcal L$-structures $A$, $M$, with
$A\subseteq M$, a finite family of $\mathcal L$-structures
$\mathcal F$, and a set $P$ of partial isomorphisms of $A$. We
assume that $M$ is $\mathcal F$-free under homomorphisms. We can assume, and we do,
that $\mathcal F$ is closed under taking homomorphisms. We
make $A$, $M$, and all the structures in $\mathcal F$ into
${\mathcal L}'$-structures in the canonical way described above.
Note that $A$ is still a substructure of $M$, $M$ is still
$\mathcal F$-free and each element of $P$ is still a partial
isomorphism of $A$, but the structures $A$, $M$, and all
structures in $\mathcal F$ are now irreflexive. Assuming that we
have Theorem~\ref{T:main} for irreflexive structures in its
assumptions, we get an ${\mathcal L}'$-structure $B$ (not
necessarily irreflexive) as in the conclusion of this theorem. By
turning $B$ in the canonical fashion into an $\mathcal
L$-structure, it is easy to check that we get the conclusion of
the theorem for the $\mathcal L$-structure $A$ and the set $P$;
this checking uses the fact that $\mathcal F$ is closed under
taking homomorphisms.

Therefore, from this point on we assume that $M$ and all
structures in $\mathcal F$ are irreflexive already with respect to
$\mathcal L$.

Let $k$ be bigger than the largest arity of a relation in
$\mathcal L$ and than the size of each structure in $\mathcal F$.
Let ${\mathcal L}^+$ be $\mathcal L$ together with $k+1$ new unary
relation symbols $U_0, U_1, \dots , U_k$. Stretched structures
below are stretched with respect to these unary predicates.

With each $\mathcal L$-structure $B$ we associate a stretched
${\mathcal L}^+$-structure $\widehat{B}$ as follows. The
underlying set of $\widehat{B}$ is $B\times \{ 0, 1, \dots , k\}$.
We interpret $U^{\widehat{B}}_i$ as $B\times \{ i\}$ and for $R\in
{\mathcal L}$, we set
\[
R^{\widehat{B}}((b_1,i_1), \dots , (b_r, i_r))
\]
precisely when $R^B(b_1, \dots , b_r)$ and the $i_j$-s with
$i_j>0$ are distinct from each other. This makes $\widehat{B}$
into a stretched structure. Note that $B$ is isomorphic to the
reduct to $\mathcal L$ of the substructure of $\widehat{B}$ with
the underlying set $U_0^{\widehat{B}}$. To each partial
isomorphism $p$ of $B$ we associate a partial isomorphism
$\widehat{p}$ of $\widehat{B}$ by letting
\[
\widehat{p}(b,i) = (p(b), i)
\]
for $b$ taken from the domain of $p$. Note that the function $p\mapsto
\widehat{p}$ is coherent.

To show Theorem~\ref{T:main}, assume we are given a finite
irreflexive $\mathcal L$-structure $A$, a set $P$ of partial
isomorphisms of $A$ and an irreflexive $\mathcal F$-free
$\mathcal L$-structure $M$ containing $A$ with each $p\in P$ extending to an automorphism of
$M$. Let ${\mathcal F}^+$ consist of all stretched small ${\mathcal L}^+$-structures that
are expansions of structures in $\mathcal F$. Consider $\widehat{A}$, $\widehat{M}$ and $\widehat{P}
= \{ \widehat{p}\mid p\in P\}$. Note that elements of
$\widehat{P}$ are strong in $\widehat{M}$. It is now easy to find
a finite structure $A'$ with
\[
\widehat{A}\subseteq A'\subseteq \widehat{M}
\]
such that each element of $\widehat{P}$ is strong in $A'$. Note that
since $\widehat{M}$ is ${\mathcal F}^+$-free, so is $A'$.
Proposition~\ref{P:strong} allows us to find a stretched structure $B'$
that is ${\mathcal F}^+$-free and such that each element of
$\widehat{P}$ extends to $B'$ and the extension is coherent. Now,
using Lemma~\ref{L:spec}, we find a special extension $B$ of
$\widehat{A}$ such that all elements of $\widehat{P}$ extend to $B$
coherently and there is a homomorphism from $B$ to $B'$. Using
speciality of $B$ we show that $B$ is a stretched structure. It is
${\mathcal F}^+$-free since there is a homomorphism $B\to B'$.
Consider now the reduct to $\mathcal L$ of the substructure of $B$
with the underlying set $U_0^B$. One can prove that this structure
is $\mathcal F$-free (see the middle half of \cite[p. 2006]{herwiglascar};
this argument uses irreflexivity of the elements of $\mathcal F$)
and, easily, $A\subseteq U_0^B$. This is the desired structure.

\section {Strengthening of the Hodkinson-Otto Extension Theorem} \label{coherentGEPPAsection}

Hodkinson and Otto \cite{hodkinsonotto} proved a Gaifman clique constrained strengthening of EPPA building on the work of Herwig and Lascar. In this section, we show that the strengthened EPPA they proved can be made coherent, that is, we prove Theorem~\ref{HOstrengthenedtheorem}. 
We will follow the terminology and ideas presented in \cite[Sections 2 and 3.1]{hodkinsonotto}. 

Let a finite relational language $\mathcal{L}$ be fixed. We start with any finite $\mathcal{L}$-structure $A$, obtain 
an EPPA-extension $B$ of $A$, say by Proposition~\ref{P:nofr}. Of course, at this point, Gaifman clique faithfulness may fail; there may be cliques in $B$ that cannot be sent to $A$ by an automorphism of $B$. Using $B$ we construct a structure $C$ 
extending $A$ which preserves EPPA and in which all such cliques are destroyed.


We now present the details. Let $B\supseteq A$ be a coherent EPPA-extension guaranteed by Proposition~\ref{P:nofr} above. If $A=B$ we are done, so suppose that $A\neq B$. A subset $u\subseteq B$ is called \textit{large} if there is no $g\in {\rm Aut}(B)$ such that $g(u)\subseteq A$.  Otherwise, the subset $u$ is called \textit{small}. Define,
\[
\mathcal{U}=\{u\subseteq B \mid u \text{ is large}\}.
\]
Notice that false cliques and the domain of $B$ are large sets, and the image of a large set under an automorphism of $B$ is also large. The definition of $\mathcal{U}$ comes from \cite[Section 3.1]{hodkinsonotto}. 

Given a finite set $X$, by $[X]$ we denote the set $\{0,1,2,\ldots,|X|-1\}\subseteq \mathbb{N}$.
We recall the following construction from \cite[Section 2.1]{hodkinsonotto}. Let $A\subseteq B$ be as above, and $b\in B$. A map $\chi:\mathcal{U}\to [B]$ is called a \textit{$b$-valuation} provided, for all $u\in\mathcal{U}$, 
$\chi(u)=0$ if $b\notin u$, and $1\leq\chi(u)<|u|$ if $b\in u$.

The domain of the extension $C$ of $A$ is 
\[
C=\big\{(b,\chi) \mid b\in B, ~\chi \text{ is a } b\text{-valuation}\big\}.
\]
We will often write $(b,\chi_b)\in C$, by which we mean that $b\in B$ and $\chi_b$ is \textit{some} $b$-valuation; for the same $b\in B$, there will, in general, be many different $b$-valuations denoted by $\chi_b$.

The following notion comes from \cite[Section 2.1]{hodkinsonotto}. A subset $S\subseteq C$ is called \textit{generic} if for any two distinct points $(b,\chi_b), (c,\chi_c)\in S$:
\begin{enumerate}[label=(\roman*), noitemsep,topsep=0pt]
\item $b\neq c$, and
\item for all $u\in\mathcal{U}$, if both $b, c\in u$, then $\chi_b(u)\neq \chi_c(u)$.
\end{enumerate}

Note that any subset of a generic set is also generic. Define the projection map $\pi:C\to B$ by setting 
\[
\pi(b,\chi)=b.
\]

The following lemma is part of a discussion in \cite[p. 399]{hodkinsonotto}.

\begin{lemma}\label{genericsmallsets}
If $S\subseteq C$ is generic, then $\pi(S)$ is a small subset of $B$.
\end{lemma}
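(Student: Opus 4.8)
The plan is to prove the contrapositive, or rather to prove it directly by producing, for a given generic set $S\subseteq C$, an automorphism of $B$ carrying $\pi(S)$ into $A$. Recall that $\pi(S)$ small means there exists $g\in\Aut(B)$ with $g(\pi(S))\subseteq A$. So I would fix a generic $S=\{(b_1,\chi_1),\dots,(b_m,\chi_m)\}$ and aim to build such a $g$. The first key observation comes straight from the definition of generic: condition (i) tells us the projections $b_1,\dots,b_m$ are pairwise distinct, so $\pi$ restricted to $S$ is injective and $\pi(S)=\{b_1,\dots,b_m\}$ has exactly $m$ elements.

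The heart of the argument is condition (ii) together with the definition of $\mathcal U$ and of a $b$-valuation. First I would argue that $\pi(S)$ itself cannot be large. Suppose toward a contradiction that $u:=\pi(S)\in\mathcal U$. Then for every $i$ we have $b_i\in u$, so the definition of a $b_i$-valuation forces $1\le \chi_i(u)<|u|=m$; that is, each $\chi_i(u)$ lands in the set $\{1,2,\dots,m-1\}$, which has only $m-1$ elements. By genericity condition (ii), applied to the common set $u\in\mathcal U$ containing all the $b_i$, the values $\chi_1(u),\dots,\chi_m(u)$ must be pairwise distinct. But $m$ pairwise distinct values cannot all lie in a set of size $m-1$, a contradiction. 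Hence $\pi(S)\notin\mathcal U$, i.e.\ $\pi(S)$ is small, which is exactly the conclusion.

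The step I expect to be the only real content is this pigeonhole clash between the size constraint $1\le\chi_i(u)<|u|$ imposed by the valuation and the injectivity of $\chi_\bullet(u)$ imposed by genericity; everything else is unwinding definitions. I would take care to note that the argument applies verbatim to every subset of $S$ as well, but for the statement as given it suffices to treat $u=\pi(S)$ directly, since that is the single large-set candidate we must rule out. I should double-check the degenerate cases: if $S$ is empty or a singleton then $\pi(S)$ has size $0$ or $1$ and is automatically small (the empty set and singletons are sent into $A$ by the identity, as $A\neq\emptyset$ whenever $B\supseteq A$ is nontrivial), so the interesting case is $m\ge 2$, where the counting above bites. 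This completes the plan; no appeal to coherence or to the finer structure of $C$ is needed, only the combinatorics of valuations and the definition of $\mathcal U$.
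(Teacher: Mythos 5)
Your proof is correct and is essentially the paper's own argument: assume $u:=\pi(S)\in\mathcal{U}$, note that genericity makes $b\mapsto\chi_b(u)$ injective on the $m$ points of $\pi(S)$ while the valuation constraint confines these values to $\{1,\dots,m-1\}$, a pigeonhole contradiction (the paper packages the same count as an injection $\theta\colon u\to[u]\setminus\{0\}$ with $|[u]\setminus\{0\}|=|u|-1$). One small slip in your side remark: a singleton $\{b\}$ with $b\notin A$ is \emph{not} sent into $A$ by the identity, but this is harmless since your main counting argument already covers $m=1$ (one value would have to lie in an empty set), and only $S=\emptyset$ needs the trivial observation that the empty set is small.
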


\begin{proof}
Suppose for the sake of contradiction that $u=\pi(S)\subseteq B$ is large. So $u\in \mathcal{U}$. As $S$ is generic, $\pi\restriction_S:S\to u$ is a bijection. We now define a map $\theta:u\to [u]\setminus \{0\}$ by setting $\theta(b)=\chi_b(u)$ for $b\in u$ where $(b, \chi_b)\in S$. 
Again, as $S$ is generic, $\theta$ is injective, but this contradicts that both $u$ and $[u]$ are finite with $|[u]\setminus \{ 0\}|=|u|-1$.
\end{proof}

We now make $C$ into an $\mathcal{L}$-structure in a way that all the $\pi$-fibres in $C$ of large subsets of $B$ are forbidden from being cliques in $C$. This is where all false cliques are destroyed. 

For every $n$-ary relation symbol $R\in\mathcal{L}$ and $n$-tuple $\big((b_1,\chi_1),\ldots,(b_n,\chi_n)\big) \in C$, define $C\models R\big((b_1,\chi_1),\ldots,(b_n,\chi_n)\big)$ if and only if
\begin{enumerate}[label=(\roman*), noitemsep,topsep=0pt]
\item $\{(b_1,\chi_1),\ldots,(b_n,\chi_n)\}$ is a generic subset of $C$, and
\item $B\models R(b_1,\ldots, b_n)$.
\end{enumerate}

Observe that under this definition if $Q\subseteq C$ is a Gaifman clique, then $Q$ is a generic subset of $C$. Therefore, the projection of a Gaifman clique in $C$ is a small subset of $B$. 

\textit{From this point onward in this section, the structures $A, B,$ and $C$ as above are fixed.}

We include the proof of the following fact for the convenience of the reader. See \cite[Lemma 22(ii)]{hodkinsonotto}.

\begin{lemma}\label{L:bemb} The structure $A$ embeds in $C$ with the image of the embedding being a generic subset of $C$.
\end{lemma}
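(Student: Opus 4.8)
The plan is to build the embedding explicitly by assigning to each $a\in A$ a suitable $a$-valuation $\chi_a$ and setting $e(a)=(a,\chi_a)$. The one fact I would isolate first is that every large set fails to be contained in $A$: if $u\subseteq A$, then the identity automorphism of $B$ witnesses $u\subseteq A$, so $u$ is small. Hence for each $u\in\mathcal{U}$ we have $u\not\subseteq A$, so $|u\cap A|\leq |u|-1=|[u]\setminus\{0\}|$. This is exactly the room needed to place the elements of $A$ lying in $u$ into the allowed values $\{1,\ldots,|u|-1\}$ without collision.

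Concretely, for each $u\in\mathcal{U}$ I would fix an injection $f_u\colon u\cap A\to [u]\setminus\{0\}$, which exists by the cardinality bound above. Then for $a\in A$ define $\chi_a\colon\mathcal{U}\to[B]$ by $\chi_a(u)=f_u(a)$ if $a\in u$ and $\chi_a(u)=0$ if $a\notin u$. By construction $\chi_a$ meets the defining conditions of an $a$-valuation, so $(a,\chi_a)\in C$, and the map $e\colon A\to C$, $e(a)=(a,\chi_a)$, is injective because its first coordinate already recovers $a$.

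Next I would verify that $e(A)$ is generic. Given distinct $a,a'\in A$, condition (i) holds since $a\neq a'$; for condition (ii), if $u\in\mathcal{U}$ contains both $a$ and $a'$, then $\chi_a(u)=f_u(a)\neq f_u(a')=\chi_{a'}(u)$ by injectivity of $f_u$. Since every subset of a generic set is generic, any tuple from $e(A)$ spans a generic subset of $C$. Finally, to see that $e$ is an embedding, recall that $C\models R(e(a_1),\ldots,e(a_n))$ holds precisely when $\{e(a_1),\ldots,e(a_n)\}$ is generic and $B\models R(a_1,\ldots,a_n)$. The genericity clause is automatic from the previous step, so $C\models R(e(a_1),\ldots,e(a_n))$ if and only if $B\models R(a_1,\ldots,a_n)$, which in turn is equivalent to $A\models R(a_1,\ldots,a_n)$ because $A$ is a substructure of $B$. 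Thus $e$ preserves and reflects all relations.

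The only real content is the cardinality inequality $|u\cap A|\leq|u|-1$ for large $u$; everything after it is bookkeeping. I expect no genuine obstacle, but the point to state carefully is why large sets cannot sit inside $A$, since this is precisely what guarantees that the valuations can be chosen injectively on each large fibre, and hence that the image is generic.
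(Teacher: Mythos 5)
Your proof is correct and takes essentially the same approach as the paper: the paper's proof fixes, for each large $u$, an enumeration $u\cap A=\{a^u_1,\ldots,a^u_n\}$ with $n<|u|$ (which is exactly your injection $f_u\colon u\cap A\to [u]\setminus\{0\}$) and defines the embedding by $a\mapsto (a,\chi_a)$ just as you do. The only difference is that you write out the genericity and relation-preservation checks that the paper leaves implicit.
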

\begin{proof}
We define an embedding $\nu:A\to C$ as follows. Note first that if $u\in\mathcal{U}$ is large, then $u$ is not a subset of $A$, so $|u\cap A|<|u|$. 
For each $u\in\mathcal{U}$, fix an enumeration of $$u\cap A=\{a^u_1,a^u_2,\ldots,a^u_{n}\}$$ where $n<|u|$. Now for each $a\in A$ we define an $a$-valuation $\chi_a:\mathcal{U}\to\mathbb{N}$.

\begin{displaymath}
   \chi_a(u) = \left\{
     \begin{array}{lr}
       0, &  \text{ if }a\notin u,\\
       i \text{ such that } a=a^u_i,  &  \text{ if }a\in u.
     \end{array}
   \right.
\end{displaymath}

For each $a\in A$ we define $\nu(a)=(a,\chi_a)$. The set $\nu(A)$ is a generic subset of $C$, and it follows that $\nu:A\to C$ is an $\mathcal{L}$-embedding.
\end{proof}

Below we denote by $A$ both the original structure $A\subseteq B$ and its image under the embedding from Lemma~\ref{L:bemb}.


Let $p\in \Part(C)$ be a partial automorphism of $C$, and let $g\in {\rm Aut}(B)$. We say that $p$ is \textit{$g$-compatible} if $\pi \circ p\subseteq g \circ \pi$, that is, for all $(b,\chi)\in \dom(p)$ there is a $g(b)$-valuation $\chi'$ such that $p(b,\chi)=\big(g(b),\chi'\big)$.

We use the freedom of choice given in \cite[Lemma 21]{hodkinsonotto} to make additional constraints in constructing the extension $\hat{p}$ of the lemma below. Such constraints will be needed later to make the extension procedure of partial automorphisms coherent.

Let $g\in \Aut(B)$, and let $p\in \Part(C)$ be $g$-compatible partial automorphism with generic domain and range. 
By genericity, for each $b\in B$ there is at most one $\chi_b$ with 
$(b,\chi_b)\in {\rm dom}(p)$. Similarly, for each $b'\in B$ there is at most one $\chi_{b'}$ with $(b',\chi_{b'})\in {\rm range}(p)$. Moreover, the functions 
\[
\pi({\rm dom}(p))\ni b\to \chi_b(u)\in [u]\;\hbox{ and }\; 
\pi({\rm range}(p))\ni b'\to \chi_{b'}(u')\in [u']
\]
are injective. Fixing $u\in {\mathcal U}$ and noticing that $p(b, \chi_b)= (g(b), \chi_{g(b)})$, this observation allows us to define a partial injection from $[u]$ to $[g(u)]=[u]$ by letting 
\begin{equation}\label{E:theta}
\theta^p_u\big(\chi_b(u)\big)=\chi_{g(b)}\big(g(u)\big).
\end{equation}
We extend $\theta^p_u$ to a total permutation of the set $[u]$, fixing $0$, by mapping the set $[u]\setminus\big\{\chi_b(u)\mid (b,\chi_b)\in \dom(p)\big\}$ onto $[u]\setminus\big\{\chi_{g(b)}(g(u))\mid (b,\chi_b)\in \dom(p)\big\}$ in an order-preserving manner. 

Define a map $\hat{p}$ on $C$ by setting for each $(c,\chi_c)\in C$ 
\begin{equation}\label{E:phat}
\hat{p}(c,\chi_c)=\big(g(c),\chi_{g(c)}\big)
\end{equation}
where $\chi_{g(c)}$ is a $g(c)$-valuation given by 
\[
\chi_{g(c)}\big(g(u)\big)=\theta^p_u\big(\chi_c(u)\big) \text{ for } u\in\mathcal{U}.
\]

The following lemma follows from the discussion in \cite[Section 3.1]{hodkinsonotto}.  
\begin{lemma}\label{gcompatibleextension}
Suppose that $g\in \Aut(B)$, and  let $p\in \Part(C)$ be $g$-compatible with generic domain and range. 
Then $\hat{p}$ is $g$-compatible, extends $p$, and belongs to $\Aut(C)$.
\end{lemma}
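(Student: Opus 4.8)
The plan is to verify the three asserted properties of $\hat{p}$ in turn, in increasing order of difficulty. The $g$-compatibility of $\hat p$ is essentially built into its definition: since \eqref{E:phat} sets $\hat p(c,\chi_c)=(g(c),\chi_{g(c)})$ for \emph{every} $(c,\chi_c)\in C$, we have $\pi\circ\hat p=g\circ\pi$ as total functions on $C$, which is stronger than the required inclusion $\pi\circ\hat p\subseteq g\circ\pi$. So I would dispose of this point in one line.

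Next I would show that $\hat p$ extends $p$. Fix $(b,\chi_b)\in\dom(p)$. Since $p$ is $g$-compatible, $p(b,\chi_b)=(g(b),\chi_{g(b)})$ for a $g(b)$-valuation $\chi_{g(b)}$, and this valuation is precisely the one recorded by the defining equation \eqref{E:theta} of $\theta^p_u$, namely $\theta^p_u(\chi_b(u))=\chi_{g(b)}(g(u))$ for each $u\in\mathcal{U}$. On the other hand, $\hat p(b,\chi_b)=(g(b),\chi')$ with $\chi'(g(u))=\theta^p_u(\chi_b(u))$; comparing, $\chi'(v)=\chi_{g(b)}(v)$ for every $v\in\mathcal{U}$ (writing $v=g(u)$, which ranges over all of $\mathcal{U}$ as $u$ does, because $g$ permutes the large sets). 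Hence $\hat p(b,\chi_b)=p(b,\chi_b)$.

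The substantial work is showing $\hat p\in\Aut(C)$, and here the key fact I would lean on throughout is that each $\theta^p_u$ is a genuine \emph{permutation} of $[u]=[g(u)]$ fixing $0$ — this is exactly why the construction extends the partial injection \eqref{E:theta} to a total, order-preserving-on-the-complement permutation. I would first check that $\hat p$ maps into $C$, i.e.\ that the output $\chi_{g(c)}$ is a legitimate $g(c)$-valuation: for $v=g(u)$, if $g(c)\in v$ then $c\in u$, so $\chi_c(u)\in[u]\setminus\{0\}$, and since $\theta^p_u$ is a $0$-fixing bijection of $[u]=[v]$ it sends this into $[v]\setminus\{0\}$; if $g(c)\notin v$ then $\chi_c(u)=0$ and $\theta^p_u(0)=0$. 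Injectivity of $\hat p$ follows from injectivity of $g$ and of each $\theta^p_u$, and since $C$ is finite this already yields bijectivity.

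Finally I would verify that $\hat p$ preserves the relations of $C$ in both directions. By the definition of $R^C$ this reduces to two things: preservation of the condition $R^B$ on projections, which is immediate since $g$ and $g^{-1}$ are automorphisms of $B$; and preservation of genericity. For the latter, given a tuple with distinct entries, $g$ being a bijection preserves the ``distinct first coordinates'' clause both ways, and for the valuation clause I would note that for $v=g(u)$ one has $\chi_{g(c_i)}(v)=\theta^p_u(\chi_i(u))$, so the injectivity \emph{and} surjectivity of $\theta^p_u$ make $\chi_i(u)\neq\chi_j(u)$ equivalent to $\chi_{g(c_i)}(v)\neq\chi_{g(c_j)}(v)$. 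Thus $S$ is generic if and only if $\hat p(S)$ is generic, and combining with the $R^B$ clause gives $R^C(\bar x)\Leftrightarrow R^C(\hat p(\bar x))$. The main obstacle is precisely this bidirectional transfer of genericity: it is what forces $\theta^p_u$ to be a full permutation of $[u]$ rather than merely the partial injection dictated by $p$, and extracting both the valuation-range bounds and the two-way genericity transfer from a single choice of extension of \eqref{E:theta} is the crux of the argument.
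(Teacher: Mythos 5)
Your proof is correct, and it takes the approach the paper intends: the paper itself gives no argument for this lemma (it is simply quoted from the discussion in Section 3.1 of Hodkinson--Otto), so your verification supplies exactly the omitted details. The three ingredients you isolate --- that $g$ permutes the family $\mathcal{U}$ of large sets, that each $\theta^p_u$ is a $0$-fixing permutation of $[u]$ agreeing with the partial injection \eqref{E:theta} on its domain, and the resulting two-way transfer of genericity (together with $g\in\Aut(B)$ handling the $R^B$ clause) --- are precisely what the construction of $\hat{p}$ in \eqref{E:phat} was designed to deliver.
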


Using Lemma \ref{gcompatibleextension}, we define a map 
$$\phi:\Part(A) \to \Aut(C)$$
as follows. Let $p\in \Part(A)\subseteq \Part(C)$. By Proposition~\ref{P:nofr}, the partial automorphism $p$ has an extension $g\in \Aut(B)$, and clearly $p$
is $g$-compatible. As $A$ is a generic subset of $C$, both $\dom(p)$, $\range(p)\subseteq A$ are also generic. Now use formula \eqref{E:phat} and apply Lemma \ref{gcompatibleextension} to get a $g$-compatible extension $\hat{p}\in \Aut(C)$ of $p$. Put $\phi(p)=\hat{p}$.
It is in the proof of the next lemma where we really use that $B$ is a \textit{coherent} EPPA-extension of $A$ as given by Proposition~\ref{P:nofr} above.

\begin{lemma}\label{coherence}
The map $\phi: \Part(A) \to \Aut(C)$ is coherent.
\end{lemma}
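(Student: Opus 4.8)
The plan is to reduce the coherence of $\phi$ to a single identity relating the permutations $\theta^p_u$ from \eqref{E:theta}, and then to verify that identity using the coherence of the extension map $\Part(A)\to\Aut(B)$ supplied by Proposition~\ref{P:nofr}. Since every value of $\phi$ is a full permutation in $\Aut(C)$, coherence of $\phi$ means exactly that $\phi(q)=\phi(p_1)\circ\phi(p_2)$ whenever $(p_1,p_2,q)\in\Part(A)^3$ is coherent. First I would fix such a triple, so $\dom(q)=\dom(p_2)$, $\range(q)=\range(p_1)$, $\range(p_2)=\dom(p_1)$ and $q=p_1\circ p_2$. The construction of $\phi$ assigns to each $p$ a fixed extension $g_p\in\Aut(B)$ drawn from the coherent map of Proposition~\ref{P:nofr}; writing $g_1=g_{p_1}$, $g_2=g_{p_2}$, $g=g_q$, that coherence gives $g=g_1\circ g_2$.

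Next I would evaluate $\phi(p_1)\circ\phi(p_2)=\hat{p_1}\circ\hat{p_2}$ on an arbitrary $(c,\chi_c)\in C$ by applying \eqref{E:phat} twice. Applying $\hat{p_2}$ produces $(g_2(c),\chi')$ with $\chi'(g_2(u))=\theta^{p_2}_u(\chi_c(u))$ for $u\in\mathcal{U}$; applying $\hat{p_1}$ then produces $(g_1g_2(c),\chi'')$ with $\chi''(g_1g_2(u))=\theta^{p_1}_{g_2(u)}\bigl(\theta^{p_2}_u(\chi_c(u))\bigr)$, where I use that $g_2$ permutes $\mathcal{U}$ (automorphisms of $B$ send large sets to large sets), so every index in $\mathcal{U}$ arises as $g_2(u)$. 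As $g=g_1g_2$, the first coordinate already matches that of $\hat q(c,\chi_c)=(g(c),\chi^q)$, where $\chi^q(g(u))=\theta^q_u(\chi_c(u))$. Thus the whole lemma comes down to the identity
\begin{equation}\tag{$\star$}
\theta^q_u=\theta^{p_1}_{g_2(u)}\circ\theta^{p_2}_u\qquad(u\in\mathcal{U}).
\end{equation}

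To prove $(\star)$ I would observe that both sides are permutations of $[u]=[g(u)]$ fixing $0$, and compare them on the ``used'' values $\{\chi_a(u):a\in\dom(q)\cap u\}$ and on the complement separately. Agreement on the used values is the soft fact that $\hat{p_1}\circ\hat{p_2}$ and $\hat q$ both extend $q$ on $A$: for $a\in\dom(q)$, since each $\hat{p_i}$ extends $p_i$ and acts on $A$ through the embedding of Lemma~\ref{L:bemb}, one gets $\hat{p_1}\bigl(\hat{p_2}(a,\chi_a)\bigr)=(q(a),\chi_{q(a)})=\hat q(a,\chi_a)$, and reading the valuation at $g(u)$ yields equality of the two sides of $(\star)$ at each $\chi_a(u)$. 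For the complement, the key bookkeeping is that $\theta^{p_2}_u$ carries its used set onto the used set of $\theta^{p_1}_{g_2(u)}$; this is exactly where $\range(p_2)=\dom(p_1)$ enters, together with $p_2(a)=g_2(a)\in g_2(u)\Leftrightarrow a\in u$. Hence $\theta^{p_2}_u$ maps the complement order-preservingly onto the complement of the used set of $\theta^{p_1}_{g_2(u)}$, the composite $\theta^{p_1}_{g_2(u)}\circ\theta^{p_2}_u$ is order-preserving there with the same image as $\theta^q_u$, and since an order-preserving bijection between two fixed finite subsets of $\mathbb{N}$ is unique, the two sides of $(\star)$ agree on the complement as well.

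The step I expect to be the main obstacle is this last matching of the order-preserving extensions: one must check that the image under $\theta^{p_2}_u$ of its used set is precisely the used set of $\theta^{p_1}_{g_2(u)}$, which is exactly the point at which all the coherence relations on $(p_1,p_2,q)$ are used---$\range(p_2)=\dom(p_1)$ to align the used sets, and $g=g_1\circ g_2$ from Proposition~\ref{P:nofr} to align the indices $g(u)$ with $g_1g_2(u)$. Everything else, including agreement on the used values, is a formal consequence of the $\hat{p_i}$ being genuine extensions of the $p_i$.
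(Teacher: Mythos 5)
Your proposal is correct and follows essentially the same route as the paper: the reduction to the identity $\theta^q_u=\theta^{p_1}_{g_2(u)}\circ\theta^{p_2}_u$ is exactly the paper's equation \eqref{E:compo}, and your split into ``used'' values versus their complement matches the paper's Case 1 (values $\chi_c(u)$ for $(c,\chi_c)\in\dom(p_2)=\dom(q)$) and Case 2 (the $i$\textsuperscript{th}-element counting argument, which is just your uniqueness-of-order-preserving-bijections phrasing). The point you flag as the main obstacle---that $\range(p_2)=\dom(p_1)$ aligns the used sets so the order-preserving extensions compose correctly---is precisely the crux of the paper's Case 2 as well.
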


\begin{proof}
We will show that the image of a coherent triple in $\Part(A)$ under the map $\phi$ is a coherent triple in $\Aut(C)$. Suppose that $p_1, p_2, q\in \Part(A)$ is a coherent triple, that is, $\dom(p_2)=\dom(q)$, $\range(p_2)=\dom(p_1)$,
$\range(p_1)=\range(q)$, and $q=p_1\circ p_2$.
Recall that $A$ is a substructure of both $B$ and $C$. By Proposition~\ref{P:nofr} there are $g_2, g_1, h\in \Aut(B)$
extending $p_2,p_1,q$, respectively, with $h=g_1\circ g_2$. Notice that $p_2$
is $g_2$-compatible, $p_1$ is $g_1$-compatible, and $q$ is $h$-compatible. 
We need to show that $\hat{q}=\hat{p}_1 \circ \hat{p}_2$.

Let $(b,\chi_b), (c,\chi_c)\in C$ be any two points. Here $b,c \in B$, and $\chi_b$ is some $b$-valuation, and $\chi_c$ is some $c$-valuation. By
the construction of $\hat{p}_2$ and $\hat{p}_1$ we get that,
$$\hat{p}_2(b,\chi_b)=\big(g_2(b),\chi_{g_2(b)}\big), \text{ where }\chi_{g_2(b)}\big(g_2(u)\big)=\theta^{p_2}_u\big(\chi_b(u)\big) \text{ for } u\in\mathcal{U},$$
and
$$\hat{p}_1(c,\chi_c)=\big(g_1(c),\chi_{g_1(c)}\big) \text{ where }\chi_{g_1(c)}\big(g_1(v)\big)=\theta^{p_1}_v\big(\chi_c(v)\big) \text{ for } v\in\mathcal{U}.$$

On the one hand, we will compute the value of $\hat{p}_1\big(\hat{p}_2(b,\chi_b)\big)$. Using the above by taking $c=g_2(b)$, $\chi_c=\chi_{g_2(b)}$ and $v=g_2(u)$ we get 
$$\hat{p}_1\big(\hat{p}_2(b,\chi_b)\big)=\hat{p_1}\big(g_2(b),\chi_{g_2(b)}\big)=\big(g_1g_2(b),\chi_{g_1(g_2(b))}\big)=\big(h(b),\chi_{h(b)}\big)$$
where $\chi_{h(b)}$ has the following value for each $u\in \mathcal{U}$,
$$\chi_{h(b)}\big(h(u)\big)=\chi_{g_1(g_2(b))}\big(g_1g_2(u)\big)=\theta^{p_1}_{g_2(u)}\Big(\chi_{g_2(b)}\big(g_2(u)\big)\Big)= \theta^{p_1}_{g_2(u)} \circ \theta^{p_2}_u\big(\chi_b(u)\big).$$

On the other hand, we have that
$$\hat{q}(b,\chi_b)=\big(h(b),\psi_{h(b)}\big) \text{ where }$$
$$\psi_{h(b)}\big(h(u)\big)=\theta^{q}_u\big(\chi_b(u)\big) \text{ for } u\in\mathcal{U}.$$
Therefore, we reach our desired result if we show that $\chi_{h(b)}= \psi_{h(b)}$, which follows from showing that 
\begin{equation}\label{E:compo}
\theta^{q}_u\big(\chi_b(u)\big)=\theta^{p_1}_{g_2(u)} \circ \theta^{p_2}_u\big(\chi_b(u)\big)
\end{equation}
for each $(b,\chi_b)\in C$ and $u\in\mathcal{U}$. 

In order to prove \eqref{E:compo}, we fix $(b,\chi_b)\in C$ and $u\in\mathcal{U}$, and let
\[
m=\chi_b(u).
\]

Recall that $\dom(p_2)=\dom(q)$, $\range(p_2)=\dom(p_1)$, $\range(p_1)=\range(q)$ are all generic sets as they are subsets of the generic set $A\subseteq C$, and so we can write their elements in the form $(c,\chi_c)$ without ambiguity, where $\chi_c$ is some $c$-valuation.

\textbf{Case 1.} $m=\chi_c(u)$ for some $(c,\chi_c)\in \dom(p_2)=\dom(q)$.

We have 
$$p_1 \circ p_2(c,\chi_c)=p_1\big(g_2(c),\chi_{g_2(c)}\big)=\big(g_1g_2(c),\chi_{g_1g_2(c)}\big)=\big(h(c),\chi_{h(c)}\big).$$ 
Using this information and \eqref{E:theta}, we get 
$$\theta^{p_1}_{g_2(u)} \circ \theta^{p_2}_u(m) =\theta^{p_1}_{g_2(u)} \circ
\theta^{p_2}_u(\chi_c(u))=\theta^{p_1}_{g_2(u)}\big(\chi_{g_2(c)}(g_2(u))\big) = \chi_{h(c)}(h(u)).
$$
As $q=p_1 \circ p_2$ we have that $q(c,\chi_c)=\big(h(c),\chi_{h(c)}\big)$, and so by construction of $\theta^{q}_u$
we get,
$$\theta^{q}_u(m)=\theta^{q}_u(\chi_c(u))=\chi_{h(c)}(h(u)).$$

Therefore, $\theta^{p_1}_{g_2(u)} \circ \theta^{p_2}_u(m)=\theta^{q}_u(m)$.

\textbf{Case 2.} $m\neq\chi_c(u)$ for all $(c,\chi_c)\in \dom(p_2)=\dom(q)$.

Suppose that $m$ is the $i^{th}$
element of $[u]$ that is not of the form $\chi_c(u)$ for some $(c,\chi_c)\in \dom(p_2)$. Then $\theta^{p_2}_u(m)$ is the $i^{th}$ element of $[u]$ not of the form 
$\chi_{g_2(c)}(g_2(u))$ for $(c,\chi_c)\in \dom(p_2)$.  Note that $[g_2(u)]=[u]$ as $g_2$ is a bijection, meaning that
$\theta^{p_1}_{g_2(u)}$ is also a permutation of the set $[u]$. Finally, as $\range(p_2)=\dom(p_1)$ we get that
$k=\theta^{p_1}_{g_2(u)} \circ \theta^{p_2}_u(m)$ is the $i^{th}$ element of $[u]$ such that $k\neq
\chi_{h(c)}(h(u))$ for $(c,\chi_c)\in \dom(p_2)$.

Now, by construction of $\theta^{q}_u$ and as $\dom(p_2)=\dom(q)$, we have that $\theta^q_u(m)$ is the
$i^{th}$ element of $[u]$ not of the form $\chi_{h(c)}(h(u))$ for  $(c,\chi_c)\in \dom(q)$. Thus, $k=\theta^q_u(m)$, and so
$\theta^{p_1}_{g_2(u)}\circ \theta^{p_2}_u(m)=\theta^{q}_u(m)$.

Therefore, we have shown that $\theta^{q}_u\big(\chi_b(u)\big)=\theta^{p_1}_{g_2(u)} \circ \theta^{p_2}_u\big(\chi_b(u)\big)$ for any
$(b,\chi_b)\in C$ and $u\in\mathcal{U}$, implying that $\chi_{h(b)}=\psi_{h(b)}$ and so we get that $\hat{p}_1 \circ \hat{p}_2=\hat{q}$. So the map $p\mapsto \hat{p}$ from $\Part(A)$ to $\Aut(C)$ is coherent.
\end{proof}

We now give the proof of Theorem \ref{HOstrengthenedtheorem}. 

\textbf{Proof of Theorem \ref{HOstrengthenedtheorem}}~
Let $A$ be a finite $\mathcal{L}$-structure. By Proposition~\ref{P:nofr}, there is an extension $B$ of $A$ in which every
element of $\Part(A)$ extends to an element of $\Aut(B)$ such that the corresponding map is coherent. From $B$ construct the
$\mathcal{L}$-structure $C=\big\{(b,\chi_b) \mid b\in B, ~\chi_b \text{ is a } b\text{-valuation}\big\}$ as described above in this section. Every element of $\Part(A)$ extends to $\hat{p}\in \Aut(C)$ given by the map $\phi$ above. By Lemma \ref{coherence}, this $\phi:\Part(A) \to \Aut(C)$ is coherent. Finally, by \cite[Section 3.1]{hodkinsonotto} every clique in $C$ is the image of a clique in $A$ under an automorphism of $C$. To see this, let $Q\subseteq C$ be a Gaifman clique. Then $Q$ is a generic subset by definition of the structure on $C$. So by Lemma \ref{genericsmallsets}, $\pi(Q)\subseteq B$ is a small subset. Hence, there is some $g\in \Aut(B)$ such that $Q'=g(\pi(Q))\subseteq A$. We finish by applying Lemma \ref{gcompatibleextension} to the partial automorphism $p=g\pi:Q\to Q'$ which is $g$-compatible with generic domain and range.  \hfill $\blacksquare$


\section {Free Amalgamation Classes and Coherent EPPA}\label{freeamalgamationsection}

In this section we present the notion of free amalgamation, and clarify the relationship between free amalgamation classes and classes of structures which forbid a family of Gaifman cliques. This allows us to apply Theorem \ref{forbclasses}(ii) and conclude that free amalgamation classes have coherent EPPA.   

\begin{definition}\label{freeamalgamdef}\rm
Let $\mathcal{L}$ be a relational language. Given finite $\mathcal{L}$-structures $A, B_1, B_2$ with $A\subseteq B_1$ and $A\subseteq B_2$, the \textit{free amalgam} of $B_1$ and $B_2$ over $A$ is the structure $C$ whose domain is the disjoint union of $B_1$ and $B_2$ over $A$, and for every relation symbol $R\in\mathcal{L}$ we define $R^C:=R^{B_1} \cup R^{B_2}$.
\end{definition}

We have the following two observations on the free amalgam $C$. First, when $B_1, B_2$ are viewed as subsets of $C$ we have that $B_1 \cap B_2 = A$. Second, there is no relation symbol $R\in \mathcal{L}$ and a tuple $\bar{c}\in C$ such that $\bar{c}$ meets both $B_1\setminus A$ and $B_2\setminus A$, and $C\models R(\bar{c})$.

\begin{definition}\rm Let $\mathcal{L}$ be a relational language, and $\mathcal{C}$ be a class of finite $\mathcal{L}$-structures.  
\begin{itemize} [noitemsep,topsep=-3pt,parsep=2pt,partopsep=0pt]
 \item The class $\mathcal{C}$ has the \textit{free amalgamation property} if $\mathcal{C}$ is closed under taking free amalgams.
 
 \item The class $\mathcal{C}$ is called a \textit{free amalgamation class} if it is a Fra\"{i}ss\'{e} class with the free amalgamation property.
 
 \item The Fra\"{i}ss\'{e} limit of a free amalgamation class is called a \textit{free homogeneous structure}.
\end{itemize}
\end{definition}

Note that the free amalgamation property implies the amalgamation property.   

\begin{example}\rm
The following are examples of free homogeneous structures.

\begin{enumerate} [noitemsep,topsep=-3pt,parsep=2pt,partopsep=0pt]
\item The random graph \cite{cameronrandomgraph}.
\item The universal homogeneous $K_n$-free graph \cite[Example 2.2.2]{macphersonsurvey}.
\item The universal homogeneous directed graph.
\item The continuum many Henson directed graphs \cite{henson}.
\item The universal homogeneous $k$-hypergraph \cite{thomas}.
\item The universal homogeneous tetrahedron-free $3$-hypergraph, where a tetrahedron is a complete $3$-hypergraph on four vertices.
\item The Fra\"{i}ss\'{e} limit of the class of all finite $3$-hypergraphs such that every subset of size $4$ contains at most two $3$-hyperedges.
\end{enumerate}
\end{example}

In situations where we have a binary relation which is either transitive or total, one expects free amalgamation to fail. For example the classes of all finite partial orders, linear orders, tournaments, and structures with an equivalence relation do not have the free amalgamation property. Another example of a Fra\"{i}ss\'{e} class without free amalgamation is the class of all finite \textit{two-graphs}, where a two-graph is a $3$-hypergraph such that every subset of size $4$ has an even number of hyperedges---see \cite[Example 2.3.1.4]{macphersonsurvey}.

\begin{definition}\rm
Let $\mathcal{C}$ be a class of finite $\mathcal{L}$-structures. A finite $\mathcal{L}$-structure $F$ is called \textit{minimal forbidden} in $\mathcal{C}$ if $F\notin \mathcal{C}$ and for any $v\in F$ we have that $F\setminus \{v\}$ is in $\mathcal{C}$.
\end{definition}

One can observe that if $F$ is a finite $\mathcal{L}$-structure such that $F\notin \mathcal{C}$, then $F$ contains a minimal forbidden substructure. For if $F$ were not a minimal forbidden structure, there is a vertex $v\in F$, such that $F\setminus \{v\}$ is still not in $\mathcal{C}$. We keep repeating this process until we find a substructure $F_0 \subseteq F$ which is minimal forbidden.

Recall that $\Forb_e(\mathcal{F})$ is the class of all finite $\mathcal{L}$-structures which are $\mathcal{F}$-free under embeddings. The class $\Forb_e(\mathcal{F})$ has the hereditary property. Conversely, suppose that $\mathcal{C}$ is a class of finite $\mathcal{L}$-structures closed under isomorphism and having the hereditary property. Let $\mathcal{F}$ be the family of all finite structures which are minimal forbidden in $\mathcal{C}$. Then $\mathcal{C}=\Forb_e(\mathcal{F})$. To see this, first suppose that $A\in \Forb_e(\mathcal{F})$ but $A\notin \mathcal{C}$. Then $A$ contains some element in $\mathcal{F}$, but this contradicts $A$ is $\mathcal{F}$-free under embeddings. So $\Forb_e(\mathcal{F})\subseteq \mathcal{C}$. For the other direction, supposing that $A\in\mathcal{C}$ but $A\notin\Forb_e(\mathcal{F})$, there is some $F\in \mathcal{F}$ and an embedding $g:F\to A$. As $\mathcal{C}$ has the hereditary property, we get $F \in\mathcal{C}$, contradicting $F \notin \mathcal{C}$. So $\mathcal{C}\subseteq \Forb_e(\mathcal{F})$.

The following lemma seems to be part of the folklore; we learned about it from Dugald Macpherson. 

\begin{lemma}\label{freeamalgamgaifmancliques}
Let $\mathcal{L}$ be a relational language, and $\mathcal{C}$ be a class of finite $\mathcal{L}$-structures. Then $\mathcal{C}$ is a free amalgamation class if and only if $\mathcal{C}=\Forb_e(\mathcal{F})$ for some family $\mathcal{F}$ of Gaifman cliques.
\end{lemma}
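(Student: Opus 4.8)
The plan is to prove the two implications separately, in both cases exploiting the one structural feature of a Gaifman clique: every pair of its vertices lies in a common tuple satisfying some relation, and dually, in a free amalgam no tuple of any relation meets both $B_1\setminus A$ and $B_2\setminus A$ (the second observation recorded after Definition~\ref{freeamalgamdef}). These two facts are exactly incompatible, which is what drives both directions.

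For the forward implication, suppose $\mathcal{C}$ is a free amalgamation class. Being a Fra\"{i}ss\'{e} class, $\mathcal{C}$ is closed under isomorphism and hereditary, so by the discussion preceding the lemma $\mathcal{C}=\Forb_e(\mathcal{F})$ where $\mathcal{F}$ is the family of all minimal forbidden structures in $\mathcal{C}$. It therefore suffices to show that every $F\in\mathcal{F}$ is a Gaifman clique. Suppose not: then $F$ has two distinct vertices $u,v$ nonadjacent in its Gaifman graph, i.e. no relation holds on any tuple containing both $u$ and $v$. I would set $A=F\setminus\{u,v\}$, $B_1=F\setminus\{v\}$ and $B_2=F\setminus\{u\}$ as induced substructures of $F$. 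By heredity $A\in\mathcal{C}$, and by minimality of $F$ both $B_1,B_2\in\mathcal{C}$; moreover $A\subseteq B_1$, $A\subseteq B_2$ and $B_1\cap B_2=A$. The key step is to check that $F$ is exactly the free amalgam $C$ of $B_1$ and $B_2$ over $A$: any tuple satisfying a relation in $F$ cannot contain both $u$ and $v$ by nonadjacency, hence lies entirely in $B_1$ or entirely in $B_2$, giving $R^F=R^{B_1}\cup R^{B_2}=R^C$ for every $R$. Since $\mathcal{C}$ is closed under free amalgamation, $F=C\in\mathcal{C}$, contradicting $F\notin\mathcal{C}$. Hence each $F\in\mathcal{F}$ is a Gaifman clique.

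For the backward implication, assume $\mathcal{C}=\Forb_e(\mathcal{F})$ with $\mathcal{F}$ a family of Gaifman cliques. The class $\Forb_e(\mathcal{F})$ is automatically closed under isomorphism and hereditary, and over a finite relational language it carries only countably many isomorphism types; the joint embedding and amalgamation properties will follow from the free amalgamation property (the free amalgam over the empty structure yields joint embedding, and the text already notes that free amalgamation implies amalgamation), so the crux is to verify closure under free amalgams. So let $B_1,B_2\in\mathcal{C}$ with common substructure $A$, and let $C$ be their free amalgam over $A$. If some $F\in\mathcal{F}$ embedded into $C$ via $h$, then since $F$ is a Gaifman clique any two vertices of $h(F)$ would lie in a common satisfied tuple; but no such tuple can meet both $B_1\setminus A$ and $B_2\setminus A$, forcing $h(F)\subseteq B_1$ or $h(F)\subseteq B_2$. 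Because the free amalgam restricts to $B_1$ (respectively $B_2$) as the original structure, $h$ would then be an embedding of $F$ into $B_1$ or $B_2$, contradicting $B_1,B_2\in\Forb_e(\mathcal{F})$. Thus $C\in\Forb_e(\mathcal{F})=\mathcal{C}$.

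The main obstacle is the forward direction, specifically the verification that a minimal forbidden non-clique $F$ literally equals the free amalgam of $F\setminus\{v\}$ and $F\setminus\{u\}$ over $F\setminus\{u,v\}$; this is the only place where nonadjacency of $u,v$ is used essentially, and the equality $R^F=R^{B_1}\cup R^{B_2}$ is precisely what would fail if some relation secretly linked $u$ and $v$. The remaining bookkeeping---the elementary closure and cardinality conditions in the definition of a Fra\"{i}ss\'{e} class needed in the backward direction---is routine, with the free amalgamation property doing the real work.
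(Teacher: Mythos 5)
Your proof is correct and takes essentially the same route as the paper's: both directions pass through the minimal-forbidden-structure description of hereditary classes and the incompatibility between Gaifman adjacency and the relation set of a free amalgam. You are in fact slightly more explicit than the paper at two points --- verifying that a minimal forbidden non-clique literally equals the free amalgam $R^F=R^{B_1}\cup R^{B_2}$, and arguing why an embedded clique cannot straddle $B_1\setminus A$ and $B_2\setminus A$ --- and in flagging the remaining Fra\"{i}ss\'{e}-class conditions, which the paper leaves implicit.
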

\begin{proof}
Let $\mathcal{C}$ be a free amalgamation class. By the above, $\mathcal{C}=\Forb_e(\mathcal{F})$ where $\mathcal{F}$ is the family of all the minimal forbidden structures in $\mathcal{C}$. It remains to show that every element $Q\in \mathcal{F}$ is a Gaifman clique. If not, then there is some $Q\in \mathcal{F}$ containing two distinct elements $u,v \in Q$ which do not satisfy any relation of $\mathcal{L}$. Let $Q_u=Q\setminus\{u\}$ and $Q_v=Q\setminus \{v\}$. By minimality of $Q$, both $Q_u$ and $Q_v$ belong to $\mathcal{C}$. Moreover, $Q_{uv}:=Q\setminus\{u,v\}$ belongs to $\mathcal{C}$ by the hereditary property. By the free amalgamation property of $\mathcal{C}$, we get that $Q$ which is the free amalgam of $Q_u$ and $Q_v$ over $Q_{uv}$ is in $\mathcal{C}$, contradicting $Q\in \mathcal{F}$. Therefore, every $Q\in\mathcal{F}$ is a Gaifman clique.

For the reverse direction, suppose that $\mathcal{C}=\Forb_e(\mathcal{F})$ for some collection $\mathcal{F}$ of Gaifman cliques. Let $A, B_1, B_2 \in \mathcal{C}$ such that $A\subseteq B_1$ and $A\subseteq B_2$. Let $C$ be the free amalgam of $B_1$ and $B_2$ over $A$. We claim that $C\in\mathcal{C}$. If $C$ were not in $\mathcal{C}$, then there is a Gaifman clique $Q\in\mathcal{F}$ and embedding $g:Q\to C$. Moreover, there are two vertices $u,v\in Q$ with $u\in B_1\setminus A$ and $v\in B_2 \setminus A$. But $u$ and $v$ are related by some $R\in \mathcal{L}$, contradicting $C$ a free amalgam.
\end{proof}

So Lemma \ref{freeamalgamgaifmancliques} together with Theorem \ref{forbclasses}(ii) give the following corollary.

\begin{corollary} \label{freeamalgamationEPPA}
Suppose that $\mathcal{L}$ is a finite relational language. Then any free amalgamation class of finite $\mathcal{L}$-structures has coherent EPPA.
\end{corollary}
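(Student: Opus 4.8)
The plan is to read off this corollary as an immediate consequence of two results already in hand: the combinatorial characterization of free amalgamation classes provided by Lemma~\ref{freeamalgamgaifmancliques}, and the coherent EPPA statement for forbidden-Gaifman-clique classes in Theorem~\ref{forbclasses}(ii). The key observation is that these two results are designed to dovetail precisely: the former expresses an arbitrary free amalgamation class in exactly the form to which the latter applies, namely as $\Forb_e(\mathcal{F})$ for a family $\mathcal{F}$ consisting of Gaifman cliques.

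Concretely, I would proceed as follows. Let $\mathcal{C}$ be any free amalgamation class of finite $\mathcal{L}$-structures. First I would apply the forward direction of Lemma~\ref{freeamalgamgaifmancliques} to obtain a family $\mathcal{F}$ of Gaifman cliques with $\mathcal{C}=\Forb_e(\mathcal{F})$. Second, since every member of $\mathcal{F}$ is by construction a Gaifman clique, I would invoke Theorem~\ref{forbclasses}(ii) directly to conclude that $\Forb_e(\mathcal{F})$ has coherent EPPA. Chaining these two facts together, $\mathcal{C}=\Forb_e(\mathcal{F})$ has coherent EPPA, which is exactly the assertion of the corollary.

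There is essentially no obstacle internal to this argument: it is a two-step composition of previously established results, requiring no new construction, choice, or estimate. The genuine difficulty has been discharged earlier---on the combinatorial side in the nontrivial direction of Lemma~\ref{freeamalgamgaifmancliques}, which uses minimal forbidden substructures together with the free amalgamation property to force each minimal forbidden structure to be a Gaifman clique, and on the structural side in Theorem~\ref{forbclasses}(ii), which ultimately rests on the Gaifman-clique-faithful coherent extension produced by Theorem~\ref{HOstrengthenedtheorem}. Thus the only thing to verify here is that the hypotheses of Theorem~\ref{forbclasses}(ii) are literally met by the family $\mathcal{F}$ furnished by the lemma, and they are.
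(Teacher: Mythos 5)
Your proposal is correct and is exactly the paper's argument: the paper derives this corollary by combining Lemma~\ref{freeamalgamgaifmancliques} (writing the free amalgamation class as $\Forb_e(\mathcal{F})$ for a family $\mathcal{F}$ of Gaifman cliques) with Theorem~\ref{forbclasses}(ii). Nothing more is needed.
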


We give below an example of a free amalgamation class which cannot be written as a class which forbids a family of structures under homomorphisms, rather than embeddings. So Herwig-Lascar Theorem \cite[Theorem 3.2]{herwiglascar} could not be applied in this situation. However, by Corollary \ref{freeamalgamationEPPA} this class has coherent EPPA.  

\begin{example}\rm  Let $\mathcal{L}$ be the language of $3$-hypergraphs, that is, $\mathcal{L}$ contains one ternary relation symbol $R$. A $3$-hypergraph is an $\mathcal{L}$-structure such that $R$ is interpreted as an irreflexive symmetric ternary relation. A $3$-tuple which satisfies $R$ is called a hyperedge. Let $Q$ be a $3$-hypergraph on four vertices with exactly $3$ hyperedges. Let $\mathcal{C}$ be the class of all finite $3$-hypergraphs which forbid $Q$ under embeddings. The class $\mathcal{C}$ is a free amalgamation class, and so has EPPA by Corollary \ref{freeamalgamationEPPA} above. Recall that a tetrahedron $H$ is a complete $3$-hypergraph on four vertices, and note that $H \in \mathcal{C}$.
Now suppose that there is a family $\mathcal{F}$ of $\mathcal{L}$-structures such that $\mathcal{C}=\Forb_h(\mathcal{F})$. Then as $Q\notin\mathcal{C}$, there is $F\in\mathcal{F}$ and a homomorphism $h:F\to Q$. Let $\alpha:Q\to H$ be any bijective  map. Then $\alpha$ is a homomorphism, and so $\alpha h:F \to H$ is a homomorphism too. So $H$ is not $\mathcal{F}$-free under homomorphisms, contradicting that $H\in \mathcal{C}$.
\end{example}


\section{The Automorphism Group of a Fra\"{i}ss\'{e} limit}\label{automorphismgroupsection}

Let $M$ be a countably infinite $\mathcal{L}$-structure, and put $G:=\Aut(M)$. The pointwise stabiliser of a subset $A\subseteq M$ is denoted by $G_A$, and the orbit of an element $a\in M$ under the action of $G$ is denoted by $a^G$. The automorphism group $\Aut(M)$ is endowed with the pointwise convergence topology whose basis consists of all cosets of pointwise stabilisers of finite subsets of $M$. That is, a basic open set has the form: $$[p]:=\{g\in\Aut(M) \mid p\subseteq g\}$$ where $p:A\to B$ is a finite partial automorphism of $M$. Note that $[p]=hG_A$ for some $h\in\Aut(M)$ such that $p\subseteq h$. With this topology $\Aut(M)$ is a Polish group, that is, a separable completely metrisable topological group. 

In this section we focus on the case when $M$ is a Fra\"{i}ss\'{e} limit, that is, a homogeneous structure. We show that if $\Age(M)$ has coherent EPPA, then $\Aut(M)$ contains a dense locally finite subgroup. We also show that if $M$ is a free homogeneous structure, then $\Aut(M)$ admits ample generics.

\subsection{A Dense Locally Finite Subgroup}\label{DLFsubgroupsection}

With respect to the pointwise convergence topology, a subgroup $H\leq \Aut(M)$ is dense if and only if for any $g\in \Aut(M)$ and finite $A\subseteq M$ there is $h\in H$ such that $g(a)=h(a)$ for all $a\in A$, that is, $H$ has the same orbits as $\Aut(M)$ in $M^n$ for all $n\in \omega$.  

\begin{proposition}\label{coherentEPPADLFsubgrouptheorem}
Suppose that $M$ is a homogeneous relational structure such that $\Age(M)$ has coherent EPPA. Then $\Aut(M)$ contains a dense locally finite subgroup.
\end{proposition}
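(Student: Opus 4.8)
The plan is to construct the dense locally finite subgroup as an increasing union of finite subgroups, using coherent EPPA to glue together automorphisms of finite approximations into genuine automorphisms of $M$. I would begin by fixing an exhaustion of $M$ by finite substructures $A_0 \subseteq A_1 \subseteq \cdots$ with $\bigcup_n A_n = M$; this uses countability and the relational language. The key idea is that coherent EPPA lets me embed the partial-automorphism semigroup of each $A_n$ coherently into a genuine automorphism group of a larger finite structure, and coherence is precisely what guarantees that these embeddings are compatible enough to assemble into group homomorphisms whose images live inside $\Aut(M)$.

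The main technical step is an iterated application of coherent EPPA combined with homogeneity. Starting from $A_0$, I would use coherent EPPA to obtain a finite $B_0 \in \Age(M)$ together with a coherent map $\phi_0 : \Part(A_0) \to \Aut(B_0)$ extending each partial automorphism. By homogeneity of $M$ (and the fact that $B_0 \in \Age(M)$ embeds into $M$), I may assume $A_0 \subseteq B_0 \subseteq M$. Now I enlarge: choose $A_1$ to contain $B_0$, apply coherent EPPA to $A_1$ to get $B_1 \supseteq A_1$ with a coherent $\phi_1 : \Part(A_1) \to \Aut(B_1)$, again realized inside $M$. Iterating produces a chain $A_0 \subseteq B_0 \subseteq A_1 \subseteq B_1 \subseteq \cdots$ inside $M$ whose union is $M$, together with coherent maps $\phi_n : \Part(A_n) \to \Aut(B_n)$. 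The crucial point is that the restriction of $\phi_n$ to $\Aut(A_n)$ is a group embedding $\Aut(A_n) \hookrightarrow \Aut(B_n)$, as noted after Definition~\ref{coherentEPPAdef}, and coherence ensures these maps respect composition.

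The heart of the argument is defining the locally finite subgroup $H \leq \Aut(M)$ and checking it is both a group and dense. The natural candidate is to let $H$ be generated by the images of the finite groups $\Aut(A_n)$ under the coherent maps, realized as partial automorphisms of $M$ that we then extend all the way up the tower. Here coherence is essential: given $g \in \Aut(A_n)$, its image $\phi_n(g) \in \Aut(B_n)$ is a finite automorphism; treating $\phi_n(g)$ as a partial automorphism of $A_{n+1}$ and applying $\phi_{n+1}$, coherence forces $\phi_{n+1}(\phi_n(g)) $ to extend $\phi_n(g)$, and the compositional property of coherent maps guarantees that the assignment $g \mapsto \lim_n \phi_{n}(\cdots(g)\cdots)$ is a homomorphism rather than merely a set map. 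Taking the union over all $n$ of these coherently-extended finite groups yields a subgroup $H$ which is locally finite, since any finitely many elements already lie in the image of some single finite $\Aut(B_N)$. For density, given $g \in \Aut(M)$ and a finite $A \subseteq M$, I choose $n$ large with $A \subseteq A_n$; then $g \restriction A_n \in \Part(A_n)$ (in fact a partial automorphism since $g$ is an automorphism of $M$), and its coherent extension $\phi_n(g\restriction A_n) \in H$ agrees with $g$ on $A$, giving $g \restriction_A = h \restriction_A$ for some $h \in H$.

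The step I expect to be the main obstacle is verifying that the coherent maps assemble into a genuine group homomorphism into $\Aut(M)$, rather than a collection of unrelated finite embeddings, and in particular that the extensions are \emph{consistent}: that extending a finite automorphism from level $n$ to level $n+1$ and beyond produces a well-defined total automorphism of $M$ and that this process is multiplicative. This is exactly where coherence does the work that plain EPPA cannot—plain EPPA would give extensions at each level but with no guarantee that $\phi_n(p_1)\circ \phi_n(p_2) = \phi_n(p_1 \circ p_2)$, so the union of images would fail to be closed under composition. The careful bookkeeping is to track coherent triples $(p_1, p_2, q)$ with $q = p_1 \circ p_2$ through each application of $\phi_n$ and confirm, using that composition of coherent functions is coherent, that the direct limit of the maps $\phi_n$ is a well-defined coherent—hence on the groups, homomorphic—map whose total image is a locally finite dense subgroup of $\Aut(M)$.
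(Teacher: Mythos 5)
Your overall strategy (iterate coherent EPPA inside $M$ via homogeneity, use coherence to turn the extension maps into group homomorphisms, pass to a limit) is the same as the paper's, and your verification that the tower maps are multiplicative is essentially right. But the density argument has a genuine gap, in two places. First, $g\restriction A_n$ is in general \emph{not} an element of $\Part(A_n)$: a partial automorphism of $A_n$ must have domain \emph{and range} inside $A_n$, and there is no reason that $g(A_n)\subseteq A_n$. (This is repairable: take $p=g\restriction A$ and choose $n$ with $A\cup g(A)\subseteq A_n$.) Second, and more seriously, the witness you produce for density is the tower-extension of $\phi_n(q)$ for a \emph{partial} automorphism $q$ of $A_n$, whereas your $H$ is generated only by the tower-images of the \emph{total} automorphism groups $\Aut(A_n)$; the extension $\phi_n(q)$ is an automorphism of $B_n$, not of any $A_m$, and nothing guarantees that its tower-extension lies in $H$. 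In fact $H$ as you defined it can fail to be dense outright: exhaust the random graph by finite \emph{rigid} graphs $A_n$ (such exhaustions exist, since every finite graph extends to a finite rigid one and any finite extension can be realized inside the random graph). Then every $\Aut(A_n)$ is trivial, and since coherence forces $\phi_n(\mathrm{id}_{A_n})=\mathrm{id}_{B_n}$ (the identity is the unique idempotent of the group $\Aut(B_n)$), your $H$ is the trivial group, while the automorphism group of the random graph is certainly not topologically generated by nothing. So density genuinely requires building extensions of partial automorphisms into $H$, not just extensions of automorphisms of the levels.

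The paper's proof differs from yours exactly at this point: it enumerates \emph{all} finite partial automorphisms $p_i\colon U_i\to V_i$ of $M$, at stage $i$ applies coherent EPPA to $A_i\cup U_i\cup V_i$ (which also disposes of the first problem, since $\dom(p_i)$ and $\range(p_i)$ both lie in the structure to which EPPA is applied), and takes $H_{i+1}\leq \Aut(A_{i+1})$ to be the group generated by $\phi(H_i)\cup\{\phi(p_i)\}$, with connecting maps $\phi\restriction_{H_i}$, which are group embeddings by coherence. The direct limit $H=\varinjlim H_i$ is then an increasing union of finite groups, hence locally finite, and it meets every basic open set $[p_i]$ by construction. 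Alternatively, within your setup it would suffice to redefine $H$ as the increasing union of the images of the \emph{full} groups $\Aut(B_n)$ under the (shifted) tower maps — your parenthetical appeal to ``the image of some single finite $\Aut(B_N)$'' suggests this is what you had in mind, and those images do nest, since consecutive towers are cofinal in one another — after which both local finiteness and density go through. One last small correction: it is the extension property (i) of coherent EPPA, not coherence (ii), that makes $\phi_{n+1}(\phi_n(g))$ extend $\phi_n(g)$; coherence is what gives multiplicativity, as you correctly use elsewhere.
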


\begin{proof}
We will build a chain $A_0\subseteq A_1 \subseteq \ldots\subseteq A_i\subseteq A_{i+1}\subseteq \ldots~$ of finite substructures of $M$ such that $M=\bigcup_{i\in\omega}A_i$, and simultaneously we build a directed system $H_0 \to \ldots \to H_i \xrightarrow{\phi_i} H_{i+1} \to\ldots~$ of finite groups such that for each $i\in\omega$ we have that $H_i\leq {\rm Aut}(A_i)$, and the map $\phi_i:H_i\to H_{i+1}$ is a group embedding such that $\phi_i(h)$ extends $h$ for every $h\in H_i$. Then, the dense locally finite subgroup of $\Aut(M)$ will be $H=\varinjlim H_i$, the direct limit of the directed sequence $(H_i)_{i\in\omega}$.

Enumerate all finite partial automorphisms of $M$ as
$\big\{p_i:U_i\to V_i \mid i \in \omega\big\}$.
Here $U_i, V_i$ are finite substructures of $M$, and $p_i$ is an isomorphism. Choose some $a\in M$, and start by putting $A_0=\{a\}$ and $H_0={\rm Aut}(A_0)$. Suppose that stage $i$ has been completed and we have a finite substructure $A_i\subseteq M$ and a group $H_i\leq \Aut(A_i)$. We will proceed to construct stage $i+1$.

We will ensure that $U_i \cup V_i \subseteq A_{i+1}$ and that $H_{i+1}$ contains an element extending $p_i$. We apply coherent EPPA to the substructure $B:=A_i \cup U_i \cup V_i$ to obtain a structure $A_{i+1}\in \Age(M)$ with $B\subseteq A_{i+1}$, and a coherent map $\phi:\Part(B) \to \Aut(A_{i+1})$ witnessing EPPA. By homogeneity of $M$ we may assume that $A_i\subseteq A_{i+1}\subseteq M$. 

The partial automorphism $p_i:U_i \to V_i$ belongs to $\Part(B)$ and so it extends to $\phi(p_i)\in \Aut(A_{i+1})$. Finish by putting: 
$$H_{i+1}:=\langle \phi\big(H_i \cup \{p_i\}\big) \rangle \leq \Aut(A_{i+1}) \ \text{ and }\ \phi_i:=\phi\restriction_{H_i}.$$
Note that $\phi_i: H_i \to H_{i+1}$ is a group embedding such that $h \subseteq \phi_i(h)$ for every $h\in H_i$. 

Clearly $M=\bigcup_{i\in\omega}A_i$. The construction ensures that the group $H:=\varinjlim H_i$ intersects every nonempty basic open subset of $\Aut(M)$ since every such subset is of the form $[p_i]$ for some $p_i$ in the enumeration above. Finally, the finiteness of each $H_i$ implies that $H$ is locally finite.
\end{proof} 

Since free amalgamation classes have coherent EPPA by Corollary \ref{freeamalgamationEPPA}, we have the following result which generalises \cite[Theorem 1.1]{bhattacharjeemacpherson} of Bhattacharjee and Macpherson who proved that the automorphism group of the random graph has a dense locally finite subgroup. 

\begin{corollary}\label{DLFsubgrouptheorem}
Let $M$ be a free homogeneous structure over a finite relational language. Then $\Aut(M)$ contains a dense locally finite subgroup.
\end{corollary}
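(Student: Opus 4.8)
The plan is to derive Corollary~\ref{DLFsubgrouptheorem} as an immediate application of Proposition~\ref{coherentEPPADLFsubgrouptheorem} combined with the fact that free amalgamation classes have coherent EPPA, namely Corollary~\ref{freeamalgamationEPPA}. First I would recall that a free homogeneous structure $M$ is, by definition, the Fra\"{i}ss\'{e} limit of some free amalgamation class $\mathcal{C}$ of finite $\mathcal{L}$-structures, where $\mathcal{L}$ is finite relational. Since $M$ is a Fra\"{i}ss\'{e} limit, it is homogeneous, and its age $\Age(M)$ equals the defining free amalgamation class $\mathcal{C}$.

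Next I would invoke Corollary~\ref{freeamalgamationEPPA}, which states precisely that any free amalgamation class over a finite relational language has coherent EPPA. Applying this to $\mathcal{C}=\Age(M)$ yields that $\Age(M)$ has coherent EPPA. With this hypothesis in hand, the conclusion follows directly: $M$ is a homogeneous relational structure whose age has coherent EPPA, so by Proposition~\ref{coherentEPPADLFsubgrouptheorem} the automorphism group $\Aut(M)$ contains a dense locally finite subgroup.

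There is essentially no main obstacle here, since all the substantive work has already been done in the preceding sections: the heavy lifting is in Theorem~\ref{HOstrengthenedtheorem} (giving coherent Gaifman clique faithful extensions), in Theorem~\ref{forbclasses}(ii) (applying it to $\Forb_e$ classes), in Lemma~\ref{freeamalgamgaifmancliques} (identifying free amalgamation classes with $\Forb_e$ of Gaifman cliques) which together yield Corollary~\ref{freeamalgamationEPPA}, and finally in the inverse-limit construction of Proposition~\ref{coherentEPPADLFsubgrouptheorem}. The only point requiring a moment's care is confirming that the language hypothesis matches: Proposition~\ref{coherentEPPADLFsubgrouptheorem} requires $M$ relational, which holds since free amalgamation is defined for relational languages, and Corollary~\ref{freeamalgamationEPPA} requires $\mathcal{L}$ finite, which is part of the hypothesis of the corollary being proved. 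Thus the proof is a one-line citation chaining these two results together.

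\begin{proof}
By definition, the free homogeneous structure $M$ is the Fra\"{i}ss\'{e} limit of a free amalgamation class $\mathcal{C}$ of finite $\mathcal{L}$-structures, so $M$ is homogeneous and $\Age(M)=\mathcal{C}$. By Corollary~\ref{freeamalgamationEPPA}, the free amalgamation class $\mathcal{C}=\Age(M)$ has coherent EPPA. Therefore $M$ is a homogeneous relational structure whose age has coherent EPPA, and Proposition~\ref{coherentEPPADLFsubgrouptheorem} gives that $\Aut(M)$ contains a dense locally finite subgroup.
\end{proof}
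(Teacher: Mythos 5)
Your proof is correct and follows exactly the paper's own route: the paper derives this corollary by noting that free amalgamation classes have coherent EPPA (Corollary~\ref{freeamalgamationEPPA}) and then applying Proposition~\ref{coherentEPPADLFsubgrouptheorem} to $\Age(M)$. Your additional check that $\Age(M)$ equals the defining free amalgamation class and that the language hypotheses match is a sound, if implicit, part of the same argument.
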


We have seen above that coherent EPPA leads to the existence of a dense locally finite subgroup, the following lemma treats the reverse direction. See \cite[Proposition 6.4]{kechrisrosendal} for a more general statement.

\begin{proposition}\label{DLFsubgroupimpliesEPPA}
Let $M$ be a homogeneous relational structure. Suppose that ${\rm Aut}(M)$ has a dense locally finite subgroup. Then $\Age(M)$ has EPPA.
\end{proposition}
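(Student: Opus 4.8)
The plan is to use the three hypotheses in concert: homogeneity of $M$ to produce global extensions of partial automorphisms, density of the locally finite subgroup to replace these global extensions by elements of that subgroup without disturbing their action on a prescribed finite set, and finally local finiteness to keep the resulting extension structure finite.

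Fix a dense locally finite subgroup $H\leq \Aut(M)$, and let $A\in\Age(M)$; view $A$ as a finite substructure of $M$. Since $A$ is finite it has only finitely many partial automorphisms, say $p_1,\dots,p_n$, with $p_j\colon U_j\to V_j$ for substructures $U_j,V_j\subseteq A$. First I would extend each $p_j$ globally: by homogeneity of $M$ there is $g_j\in\Aut(M)$ with $p_j\subseteq g_j$. Then, applying density of $H$ to the automorphism $g_j$ and the finite set $U_j$, I obtain $h_j\in H$ with $h_j\restriction U_j = g_j\restriction U_j = p_j$. Thus each partial automorphism of $A$ is already extended by an element of $H$.

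Next I would form the finite subgroup $H_0=\langle h_1,\dots,h_n\rangle\leq H$, which is finite precisely because $H$ is locally finite, and set
\[
B=\bigcup_{g\in H_0} g[A].
\]
Since $H_0$ and $A$ are finite, $B$ is a finite subset of $M$; giving it the induced structure makes $B$ a finite substructure of $M$, so $B\in\Age(M)$ and clearly $A\subseteq B$. By construction $B$ is $H_0$-invariant, so every $g\in H_0$ restricts to an automorphism $g\restriction B\in\Aut(B)$. In particular each $h_j\restriction B$ lies in $\Aut(B)$ and, since $U_j\subseteq A\subseteq B$, satisfies $(h_j\restriction B)\restriction U_j = p_j$; hence $h_j\restriction B$ extends $p_j$. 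As the $p_j$ exhaust the partial automorphisms of $A$, the structure $B$ witnesses EPPA for $A$, and since $A$ was arbitrary, $\Age(M)$ has EPPA.

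The only genuinely delicate point—and the one the whole argument turns on—is guaranteeing that the extension structure $B$ is finite: the orbit of $A$ under the group generated by the chosen extensions must stay finite, which is exactly what local finiteness of $H$ provides once density has allowed us to choose all the $h_j$ inside $H$. Homogeneity and density alone would only yield $B$ invariant under a possibly infinite group. I would also stress that density is applied with the finite set $U_j$ rather than all of $A$, since we only need $h_j$ to agree with $p_j$ on the domain of $p_j$.
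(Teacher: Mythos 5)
Your proposal is correct and follows essentially the same route as the paper's proof: extend each partial automorphism globally by homogeneity, use density to replace the extensions by elements of the locally finite subgroup agreeing with them on the relevant finite sets, generate a finite subgroup by local finiteness, and take the (finite, invariant) union of images of $A$ as the EPPA-extension. The only difference is one of exposition—you spell out the density step on $U_j$ that the paper compresses into ``we may assume each $f_i\in H$.''
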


\begin{proof}
Let $H\leq {\rm Aut}(M)$ be a dense locally finite subgroup. Fix some $A\in {\rm Age}(M)$, and assume that $A\subseteq M$. Enumerate $\Part(A)=\{p_1,\ldots,p_n\}$. By the homogeneity of $M$ there are $f_1,\ldots,f_n \in {\rm Aut}(M)$ such that $p_i\subseteq f_i$. As $H$ is dense, we may assume that each $f_i\in H$. As $H$ is locally finite, the subgroup $F:=\langle f_1,\ldots,f_n\rangle\leq H$ is finite. Define the finite substructure $B:=\bigcup\{f(A)\mid f\in F\}$ of $M$. Clearly $f(B)=B$ for all $f\in F$, and so each $f_i\restriction_B \in \Aut(B)$ and extends $p_i$. Thus, $B$ is an EPPA-extension of $A$.
\end{proof}

\textbf{Question.} Is it possible to obtain coherent EPPA, rather than just EPPA, in the conclusion of Proposition \ref{DLFsubgroupimpliesEPPA} above?

\subsection{Ample Generics}\label{amplegenerics}

We now proceed towards the existence of ample generics for free homogeneous structures. The action of diagonal conjugation of a group $G$ on $G^n$ is given by
$g\cdot (h_1,\ldots,h_n)=(gh_1g^{-1},\ldots,gh_ng^{-1})$. The following notion originates from \cite{HHLS}, and the version below is from \cite{kechrisrosendal}.

\begin{definition}\rm
A Polish group $G$ has \textit{ample generics} if  for each $n\geq 1$, $G$ has a comeagre orbit in its action on $G^n$ by diagonal conjugation.
\end{definition}

We say that a countably infinite structure $M$ has ample generics if ${\rm Aut}(M)$ has ample generics. We discuss briefly a consequence of the existence of ample generics. A subgroup $H\leq G=\Aut(M)$ has \textit{small index} if $|G:H|<2^{\aleph_0}$. We say that $M$ has the \textit{small index property} if any subgroup of $\Aut(M)$ of small index is open. So when $M$ has the small index property, the topological structure of the ${\rm Aut}(M)$ is determined by its abstract group structure, as a subgroup of ${\rm Aut}(M)$  is open precisely if it has a small index. An interesting result of \cite{HHLS} and \cite[Theorem 1.6]{kechrisrosendal} is that if $M$ has ample generics, then $M$ has the small index property. It was shown in \cite{HHLS} that the random graph has ample generics, and that the automorphism group of any $\omega$-stable, $\omega$-categorical structure contains an open subgroup with ample generics. 

In our situation, the methods of \cite{HHLS} can be used to establish the existence of ample generics for free homogeneous structures.

\begin{theorem}\label{amplegenericstheorem} 
Any free homogeneous structure over a finite relational language has ample generics.
\end{theorem}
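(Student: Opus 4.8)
The plan is to follow the method of \cite{HHLS}, organised through the amalgamation criterion for comeagre conjugacy classes from \cite{kechrisrosendal}. Write $M$ for the free homogeneous structure and $\mathcal{K}=\Age(M)$, a free amalgamation class. For each $n\geq 1$ the relevant finite approximations to a tuple $(g_1,\dots,g_n)\in\Aut(M)^n$ are the \emph{$n$-systems} $\mathbb{A}=(A;p_1,\dots,p_n)$ with $A\in\mathcal{K}$ and $p_1,\dots,p_n\in\Part(A)$, where a morphism $\mathbb{A}\to(B;q_1,\dots,q_n)$ is an embedding $e\colon A\to B$ with $e\circ p_i\subseteq q_i\circ e$ for every $i$; let $\mathcal{K}_n$ be this class of systems. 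By the criterion relating comeagre orbits of the diagonal conjugation action on $\Aut(M)^n$ to the joint embedding and weak amalgamation properties of $\mathcal{K}_n$ (\cite{HHLS}, and \cite[Section 6]{kechrisrosendal}), it suffices to verify that $\mathcal{K}_n$ has the joint embedding property and the weak amalgamation property for every $n$; ample generics then follows.

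First I would dispatch the joint embedding property, which comes directly from free amalgamation over the empty structure, i.e.\ disjoint union. Given systems $(A_1;\bar p^1)$ and $(A_2;\bar p^2)$, form the free amalgam $A_1*_{\emptyset}A_2\in\mathcal{K}$ (Definition \ref{freeamalgamdef}); since the domains of $p^1_i$ and $p^2_i$ are disjoint and no relation of the amalgam crosses between $A_1$ and $A_2$, the union $q_i:=p^1_i\cup p^2_i$ is a partial automorphism of $A_1*_{\emptyset}A_2$, giving a system extending both.

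The hard part will be the weak amalgamation property, which I would obtain from the stronger statement that a cofinal subclass of $\mathcal{K}_n$ consists of amalgamation bases. Call a system $(B;g_1,\dots,g_n)$ \emph{total} if each $g_i\in\Aut(B)$. Total systems are cofinal in $\mathcal{K}_n$: given $(A;\bar p)$, Corollary \ref{freeamalgamationEPPA} supplies $B\supseteq A$ in $\mathcal{K}$ and a coherent $\phi\colon\Part(A)\to\Aut(B)$ extending each partial automorphism, so $(B;\phi(p_1),\dots,\phi(p_n))$ is a total system above $(A;\bar p)$. Next, every total system $\mathbb{B}=(B;\bar g)$ is an amalgamation base in $\mathcal{K}_n$: given systems $\mathbb{C}_1,\mathbb{C}_2$ extending $\mathbb{B}$, apply Corollary \ref{freeamalgamationEPPA} again to refine each $\mathbb{C}_j$ to a total system $(C_j;\bar g^j)$ with $g^j_i$ extending the partial automorphism of $C_j$ in $\mathbb{C}_j$; since $g^j_i\supseteq g_i$ and $g_i\in\Aut(B)$, each $g^j_i$ fixes $B$ setwise and hence maps $C_j\setminus B$ into itself. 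Then the free amalgam $D=C_1*_{B}C_2\in\mathcal{K}$ carries the maps $h_i:=g^1_i\cup g^2_i$, which are total automorphisms of $D$ because the two factors agree on $B$, act on the disjoint sets $C_j\setminus B$, and each restricts to an automorphism on the relation-closed pieces $C_1$ and $C_2$. Thus $(D;\bar h)$ amalgamates $\mathbb{C}_1$ and $\mathbb{C}_2$ over $\mathbb{B}$, and a cofinal class of amalgamation bases yields the weak amalgamation property.

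The principal obstacle is precisely this amalgamation step: one must check that the two total automorphisms genuinely fuse into a single automorphism of the free amalgam, and this rests on each extended automorphism stabilising the base $B$ setwise (guaranteed because it extends the \emph{total} automorphism $g_i$ of $B$) together with the fact that no relation of $D$ links $C_1\setminus B$ to $C_2\setminus B$, which is the defining property of the free amalgam. The remaining care is in fixing the precise systems category $\mathcal{K}_n$ and matching it to the criterion of \cite{kechrisrosendal}; the extension property that drives both the cofinality of total systems and their refinement is furnished uniformly by Corollary \ref{freeamalgamationEPPA}, so that the back-and-forth construction of \cite{HHLS} assembles the generic $n$-tuple of automorphisms of $M$.
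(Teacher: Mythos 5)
Your proof is correct and follows essentially the same route as the paper: both carry out the HHLS-style method by (a) using Corollary \ref{freeamalgamationEPPA} to make finite systems of partial automorphisms total, and (b) fusing total automorphisms over a free amalgam, where the defining property of free amalgamation (no relations crossing the two factors) guarantees that the union of the two automorphisms is an automorphism. The only difference is packaging: you feed these two verifications into the Kechris--Rosendal JEP-plus-WAP criterion for the class of $n$-systems, whereas the paper feeds the identical verifications into Ivanov's amalgamation-base criterion \cite[Proposition 3]{ivanovstrongly}.
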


\begin{proof}
Let $M$ be a free homogeneous structure over a finite relational language. By Corollary \ref{freeamalgamationEPPA}, ${\rm Age}(M)$ has EPPA. Consequently, it follows that the set $\Gamma$ of all finite subsets of $M$ is an amalgamation base of $M$ (see \cite[Definition 2]{ivanovstrongly} and \cite[Definition 2.8]{HHLS}). 
To see this, let $p_1, \ldots, p_n$ be finite partial automorphisms of $M$. Put $A=\bigcup_{i=1}^n(\dom(p_i)\cup \range(p_i))$, then by EPPA there is an extension $B\in \Age(M)$ of $A$ such that each $p_i$ extends to an automorphism of $B$. By homogeneity of $M$, we may assume that $A\subseteq B \subseteq M$. This establishes the first condition of an amalgamation base. To show the second condition, let $A, B, C$ be in $\Gamma$ such that $A\subseteq B$ and $A\subseteq C$. Let $D\in \Age(M)$ be the free amalgam of $B$ and $C$ over $A$. Again by homogeneity of $M$, we may assume that $C\subseteq D \subseteq M$. Let $B'\subseteq D$ be the isomorphic copy of $B$. Then if $\alpha \in \Aut(B')$ and $\beta \in \Aut(C)$ such that $\alpha\restriction_A = \beta\restriction_A$, then their union $\alpha \cup \beta$ is an automorphism of $D$, this holds because of the way the structure on $D$ was defined. We finish by invoking \cite[Proposition 3]{ivanovstrongly} to conclude that $M$ has ample generics.      
\end{proof}

Any homogeneous structure over a finite relational language is $\omega$-categorical. So from \cite[Theorems 6.9, 6.12, and 6.19, and Corollary 1.9]{kechrisrosendal} we obtain the following.

\begin{corollary}
Suppose that $M$ is a free homogeneous structure over a finite relational language. Then $\Aut(M)$ has the small index property, uncountable cofinality, 21-Bergman property, and Serre's property (FA).
\end{corollary}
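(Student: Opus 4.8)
The plan is to obtain all four conclusions as consequences of the ample generics already furnished by Theorem~\ref{amplegenericstheorem}, combined with the $\omega$-categoricity of $M$. The transfer from ample generics to these group-theoretic properties is precisely the content of the Kechris--Rosendal theory, so the task reduces to verifying the hypotheses of \cite[Theorems 6.9, 6.12, and 6.19, and Corollary 1.9]{kechrisrosendal} for the Polish group $G=\Aut(M)$.

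First I would record that $M$ is $\omega$-categorical. Since $\mathcal{L}$ is finite and relational, for each $n$ there are only finitely many isomorphism types of $\mathcal{L}$-structures on $n$ elements; by homogeneity of $M$, the $\Aut(M)$-orbit of an $n$-tuple is determined by the isomorphism type of the substructure it generates together with the equality pattern of its coordinates. Hence $\Aut(M)$ has only finitely many orbits on $M^n$ for every $n$, i.e.\ $\Aut(M)$ is oligomorphic, and by the Ryll--Nardzewski theorem $M$ is $\omega$-categorical. This oligomorphicity is the additional input required below beyond ample generics.

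Next I would feed ample generics into the cited results. The small index property follows directly from ample generics by \cite[Theorem 6.9]{kechrisrosendal}. Uncountable cofinality follows from \cite[Theorem 6.12]{kechrisrosendal}, whose supplementary hypothesis---that $G$ is not the union of a countable chain of proper open subgroups---is supplied by oligomorphicity. The 21-Bergman property then follows from \cite[Theorem 6.19]{kechrisrosendal}, once more combining ample generics with oligomorphicity to secure Cayley boundedness. Finally, Serre's property (FA) is a formal consequence of the Bergman property: a group with that property is strongly bounded, so every isometric action on a metric space has bounded orbits, and a bounded orbit on a tree forces a fixed point (the circumcentre of the orbit), whence property (FA). All four conclusions are moreover packaged together in \cite[Corollary 1.9]{kechrisrosendal}, which I would cite for the statement as a whole.

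The step demanding the most care is the verification, for the uncountable cofinality and Bergman conclusions, that $\omega$-categoricity furnishes exactly the extra hypothesis (non-expressibility as a countable union of proper open subgroups, respectively Cayley boundedness) that ample generics alone does not provide. The genuinely hard work---producing ample generics from coherent EPPA and free amalgamation---is already complete in Theorem~\ref{amplegenericstheorem}, so what remains is the bookkeeping of matching our $G$ to the precise statements of the Kechris--Rosendal theorems; no further combinatorial construction is needed.
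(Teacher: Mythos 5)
Your proposal is correct and follows the same route as the paper: the paper derives the corollary from Theorem~\ref{amplegenericstheorem} by noting that any homogeneous structure over a finite relational language is $\omega$-categorical and then citing \cite[Theorems 6.9, 6.12, and 6.19, and Corollary 1.9]{kechrisrosendal}. Your write-up merely spells out the oligomorphicity argument and the role of each cited result in more detail than the paper does.
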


\textbf{Acknowledgements.} D. Siniora is extremely thankful to Dugald Macpherson for his support and useful suggestions.

\bibliographystyle{abbrv}
\bibliography{DNSreferences}

\end{document}